\newtheorem{theorem}{Theorem}[section]
\newtheorem{corollary}{Corollary}[theorem]
\newtheorem{lemma}[theorem]{Lemma}
\title{Inventory Loops (i.e. Counting Sequences) have Pre-period $2\max S_1+60$}
\author{Onno M. Cain - \texttt{onno.maw.cain@gmail.com} \\ 
Sela T. Enin - \texttt{sela.ti.enin@gmail.com}}
\date{March 2020}
\begin{document}

\maketitle

\begin{center}
\end{center}
\textbf{Abstract.} An Inventory Sequence $(S_0, S_1, S_2, ...)$ is the iteration of the map $f$ defined roughly by taking an integer to its numericized description (e.g. $f(1381)=211318$ since ``$1381$" has two $1$'s, one $3$, and one $8$). Our work analyzes the iteration under the infinite base. Any starting value of positive digits is known to be ultimately periodic [1] (e.g. $S_0=1381$ reaches the 1-cycle $f(3122331418)=3122331418$). Parametrizations of all possible cycles are also known [2,3]. We answer Bronstein and Fraenkel's open question of 26 years showing the pre-period of any such starting value is no more than $2M+60$ where $M=\max S_1$. And oddly the period of the cycle can be determined after only $O(\log\log M)$ iterations.

\section{Games and Grammar}
Mathematicians (as we all know by now) play games and often do so turning sentences into numbers. For example, what happens when we describe a number in English, and numericize the description? The number 1381 for example has
\begin{center}
    \textbf{two} $1$'s, \textbf{one} $3$, and \textbf{one} $8\quad\rightarrow\quad \mathbf21\mathbf13\mathbf18$.
\end{center}
We might call ``$211318$" the child of $1381$. So what's the grandchild? The number $211318$ has
\begin{center}
    \textbf{three} $1$'s, \textbf{one} $2$, \textbf{one} $3$, and \textbf{one} $8\quad\rightarrow\quad \mathbf31\mathbf12\mathbf13\mathbf18$.
\end{center}
Going on like this we generate the ``Family Lineage" of $1381$,
\begin{center}
    \includegraphics[scale=0.27]{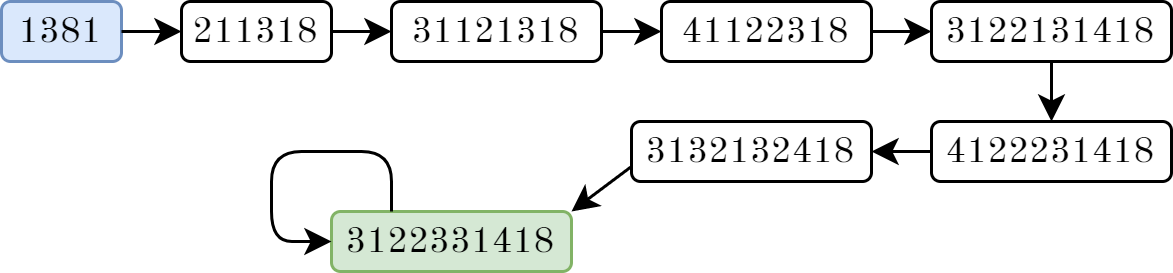}
    
    \textbf{Figure 1.1}
\end{center}
and find $\mathbf31\mathbf22\mathbf33\mathbf14\mathbf18$ which has \textbf{three} $1$'s, \textbf{two} $2$'s, \textbf{three} $3$'s, \textbf{one} $4$, and \textbf{one} $8$. We have found a number which describes itself (is the mathematical equivalent of an \textit{autogram}!). Some call these \textit{self-inventoried} numbers since they ``take inventory", so to speak, of their own digits. In our terms, we would say $3122331418$ is its own parent. So are there numbers strictly their own Grandparents? Great-grandparents? Yes and yes. 
\begin{center}
    \includegraphics[scale=0.27]{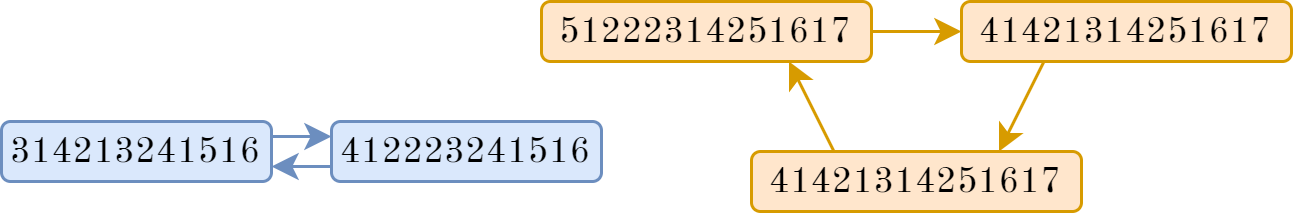}
    
    \textbf{Figure 1.2}
\end{center}
The former examples terminate the Family Lineages of $56$ and $67$ respectively. These cycles will come up repeatedly in the article. We call them \textit{Inventory Loops} since each entry \textit{takes inventory}, so to speak, of its predecessor.

Similarly to Monopoly, as one continues playing, various rule choices come up.
\begin{enumerate}
    \item When multi-digit numbers appear do we leave them clumped or split? E.g.\footnote{Thanks to math.stackexchange user Alexander Sibelius for improving this strange question's wording.} Does ``$13$" become one ``$1$" and one ``$3$" or a single ``$(13)$"?
    \item Can we pick a starting value with infinite digits?
    \item What order do we count the digits in? (notice so far digits have been counted up smallest-to-largest)
    \item Do we count \textit{all} the digits? or just consecutive runs? or even digits of all preceding ancestor?
    \item Can we skip nouns?
\end{enumerate}

Each answer choice branches a new variation and -- wonderfully! -- all the aforementioned variations have been studied. The first question is really asking whether our base is infinite (see [1,2,3]) or finite (see [5,6]). There are also perfectly reasonable infinite starting values [1]. The ``number"\footnote{Since ``$(10)$" is acting as a single ``digit" it is more technically correct to call this a \textit{sequence} rather than a \textit{number}.} $2132231455565758696(10)...$ has
\begin{center}
    two $1$'s, three $2$'s, two $3$'s, one $4$, five $5$'s, five $6$'s, five $7$'s, five $8$'s, six $9$'s, six $10$'s, ...
\end{center}
Digits can be counted smallest-to-largest (see [1,2,3,8,9]), by appearance (see [5,6,7]), or largest-to-smallest (which isn't much different from smallest-to-largest). And we have been counting all digits at once (see [1,2,3,6,8,9]) but the most famous variation -- Conway's look-and-say sequence [4,7] -- counts only consecutive runs. Counting the digits of all ancestor has even been played (see OEIS:A060857). Lastly, one can skip nouns (see [8,9]) obtaining $\mathbf{6210001000}$ which has
\begin{center}
    \textbf{six} $0$'s, \textbf{two} $1$'s, \textbf{one} $2$, \textbf{zero} $3$'s,  \textbf{zero} $4$'s, \textbf{zero} $5$'s, \textbf{one} $6$, \textbf{zero} $7$'s, \textbf{zero} $8$'s, and \textbf{zero} $9$'s.
\end{center}
These and other variations can be found on the Online Encyclopedia of Integer Sequences [10] (see A098155, A083671, A005151, A047842, A007890, A005150, A006711 for a sampling).

Researching any of these variations is confusing at first as many of them receive multiple names and some names are used of multiple variations. That is, the name$\rightarrow$variation map is ill-defined in both directions. Our variation of present concern is studied in [1,2,3]:
\begin{enumerate}
    \item infinite base (=clumping multi-digits),
    \item finite amount of digits,
    \item counting smallest-to-largest
    \item all digits of the parent only,
    \item and keeping nouns.
\end{enumerate}
Although, the order of counting doesn't have much effect as we'll see later.

Bronstein and Fraenkel showed in 1994 [1] all starting values of positive digits eventually reach an Inventory Loop. They asked a few follow-up questions. One about a different variation was answered by Sauerberg and Shu in 1997 [3]. The only question about our present variation was: how long will a given starting value take to reach an Inventory Loop? (in their terms: what are ``meaningful pre-period bounds"?). British mathematician \textit{The Math Book} by Clifford Pickover mentions Roger Hargrave also did work on our variation but Mr. Pickover unfortunately no longer has the source \footnote{Correspondence with the author.}. That is all to say, the pre-period (or ``time-to-loop") has remained unbounded for 26 years.

Here, we replicate the enumerations of Inventory Loops in [1,2,3] by alternate methods leading to a pre-period bound of $2M+60$ where $M=\max S_1$ is either the largest entry of our starting value or no greater than its length.

\section{Clarifying Names}
Our variation alone has appeared under the names ``Counting Sequences" [1,2,3], ``Reverse-likeness Sequences" [4], the ``Pea-pattern Sequence" [5], and ``Inventory Sequences" [6]. Technically, the first of the names is the only source in which the infinite base variation is rigorously defined (the base is either finite or undefined in the latter three sources). It is therefore tempting to leave our particular variation under the name ``Counting Sequences" but the author hesitates to do so for two reasons.
\begin{enumerate}
    \item Self-descriptive numbers [8,9] are sometimes called ``Self-counting" numbers and thus we would have ambiguity between the nounless [8,9] and nounful [1,2,3,4,5,6,7] variations.
    \item The adjective ``Counting" is in general hideously ambiguous in mathematics.
\end{enumerate}
Out of the aforementioned names, ``Inventory" is intuitive and has relatively little existing usage in the literature. It is therefore proposed to call both the finite-base and infinite-base variations of our iteration ``Inventory Sequences" and the corresponding cycles ``Inventory Loops". For example we would say:  ``Kowacs [5] enumerates Inventory Loops in bases $2$ through $9$" -- and -- ``We analyze Inventory Loops in the infinite base" -- etc.

\section{Multisets}
Our analysis will be done in the language of multisets -- that is (as one may guess) sets with multiple copies of elements. This guts much redundancy from the start since $f$ maps integers/sequences to the same value regardless of their digits/elements' ordering (e.g. $f(1381)=f(1138)=f(8113)=...$). To build up $f$ in the multiset ecosystem some symbols must be first defined. Let
\begin{itemize}
    \item $\mathbb{N}_+=\{1,2,3,...\}$ denote positive integers,
    \item $\mathbb{N}_+^*$ denote multisets of positive integers (e.g. $S=\{1,1,3,8\}\in\mathbb{N}_+^*$),
    \item ``$[R]$" denote the \textit{set} of elements in $R$ (e.g. $[S]=\{1,3,8\}$),
    \item ``$\text{mult}_R(x)$"\footnote{Note a multiset $R$ is also a set if $\text{mult}_R:\mathbb{N}_+^*\rightarrow\{0,1\}$.} denote ``the multiplicity of $x$ in $R$" (e.g. $\text{mult}_S(1)=2,\ 
    \text{mult}_S(4)=0$), and
    \item ``$\mu(R)$" denote ``The multiset of multiplicities in $R$". I.e.
    $$\mu(R)=\{\text{mult}_R(x):x\in[R]\}$$
    So for example $\mu(\{1,1,3,8\})=\{1,1,2\}$.
    \item For multisets $A,B\in \mathbb{N}_+^*$, the sum ``$A+B$" will mean concatenation (essentially adding multiplicities). I.e.
    $$\text{mult}_{A+B}(x)=\text{mult}_A(x)+\text{mult}_B(x).$$
    So for example $\{1,3,8\}+\{1,1,2\}=\{1,1,1,2,3,8\}$. This generalizes set union.
    \item And ``$A^B$" will mean ``the elements of $A$ also in $B$". Thus if we have $A=\{1,1,2\}$ and $B=\{1,3,8\}$ then $A^B=\{1,1\}$. This generalizes set intersection.
    \item Similarly ``$A^{\neg B}$" will mean ``the elements of $A$ \textit{not} in $B$". Thus $A^{\neg B}=\{2\}$ and $B^{\neg A}=\{3,8\}$. This -- as may be expected -- generalizes set difference.
\end{itemize}
With the new vocabulary our iteration of interest $f:\mathbb{N}_+^*\rightarrow \mathbb{N}_+^*$ becomes
$$f(S)=[S]+\mu(S).$$
For example, $f(\{1,1,3,8\})=\{1,1,1,2,3,8\},\ f^2(\{1,1,3,8\})=\{1,1,1,1,2,3,3,8\},\ $ etc. 

In addition to these symbols, we also use metaphor throughout to guide intuition. The metaphorical terms will be capitalized from here on to avoid ambiguity and are as follows\footnote{The authors wonder if all mathematicians shouldn't enumerate their metaphors as with definitions.}
\begin{itemize}
    \item $f(S)$ is thought of as the \textit{Child} of $S$ and thus $f$ as the Parent$\rightarrow$Child map ($f^2(S)$ is thought of as the \textit{Grandchild} and so on).
    \item Accordingly $S_0\overset{f}{\rightarrow}S_1\overset{f}{\rightarrow}S_2\overset{f}{\rightarrow}...$ is thought of as the \textit{Family Lineage} of $S_0$,
    \item Any $R\in\mathbb{N}_+^*$ such that $f^k(R)=S$ for some $k>0$ is thought of as an \textit{Ancestor} of $S$, and all such Ancestors are thought of as the \textit{Family Tree} or \textit{Ancestry} of $S$.
    \item $S_0$ is thought of as the \textit{First Generation}, $S_1$ as the \textit{Second Generation}, and so on.
    \item The largest element $\max S$ is thought of as the \textit{Height} of $S$. Thus if $\max f(S)>\max S$ we say $f(S)$ has \textit{Grown Taller} than its Parent.
    \item The set of multiplicities $\mu(S)$ is thought of as the \textit{Adjectives} of $S$ and the set $[S]$ as the \textit{Nouns} of $S$.
    \item Later on, some particular Adjectives which are neither the smallest or the largest of a multiset will be called the \textit{Core Adjectives}.
    \item Also later on, some multisets are obtained from others by removing or decreasing elements. This is thought of as \textit{Deterioration} and the resulting multiset as a \textit{Deteriorate}.
\end{itemize}

Lastly, for easier reading we sometimes represent multisets as integers (e.g. ``$4142x37y$" stands for ``$\{4,1,4,2,x,3,7,y\}$"). Parenthesis are placed where ambiguity requires (e.g. ``$113777(12)(77)$" stands for ``$\{1,1,3,7,7,7,12,77\}$"). This will be specified when used as ``Integer Notation".

\section{Establishing Cycles}
This section we 1) specify multisets Taller than their Parents, 2) deduce therefrom all Inventory Sequences reach a maximum Height, and 3) conclude a Loop results in all cases. Presume our Inventory Sequence $\{S_i\}_{i=0}^\infty$ (so in other words $S_0\overset{f}{\rightarrow}S_1\overset{f}{\rightarrow}S_2\overset{f}{\rightarrow}...$). We take the first part first.

\begin{lemma}
    If $\max S_{i+1}>\max S_i$ and $i\ge 1$ then $S_{i-1}=m\{1,...,n\}$ where $1\le m\le n$. In other words if a multiset past the first Generation is Taller than its Parent, we can nicely parametrize its Grandparent in two variables.
\end{lemma}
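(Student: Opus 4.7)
The plan is to trace back two generations and squeeze the hypothesis through the formula $f(S)=[S]+\mu(S)$. First I would observe that $\max f(S)=\max(\max S,\max\mu(S))$, since $\max[S]=\max S$. Applied at generation $i$, the hypothesis $\max S_{i+1}>\max S_i=:M_i$ forces $\max\mu(S_i)>M_i$, so some element $y\in S_i$ satisfies $\text{mult}_{S_i}(y)>M_i$. In particular $y\le M_i<\text{mult}_{S_i}(y)$.

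Next I would use $i\ge 1$ to rewrite $S_i=[S_{i-1}]+\mu(S_{i-1})$ and count contributions to $\text{mult}_{S_i}(y)$: there is at most one copy of $y$ from $[S_{i-1}]$ (which is a set), and one copy of $y$ inside $\mu(S_{i-1})$ for every distinct $x\in[S_{i-1}]$ with $\text{mult}_{S_{i-1}}(x)=y$. Writing $d$ for that second count, I get $\text{mult}_{S_i}(y)\le 1+d$, so $d\ge M_i$.

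Now comes the pigeonhole step. Those $d$ distinct elements sit inside $[S_{i-1}]\subseteq\{1,\dots,M_{i-1}\}$, so $d\le M_{i-1}$, forcing $M_i\le M_{i-1}$; the reverse inequality $M_i\ge M_{i-1}$ is automatic because $[S_{i-1}]$ is a summand of $S_i$. Thus $M_i=M_{i-1}=d$, which squeezes $[S_{i-1}]$ to equal $\{1,\dots,M_{i-1}\}$ in full, and forces every element of $[S_{i-1}]$ to have multiplicity exactly $y$ in $S_{i-1}$. Setting $n:=M_{i-1}$ and $m:=y$ yields $S_{i-1}=m\{1,\dots,n\}$; $m\ge 1$ is trivial, and $m=y\le M_i=n$ closes the range $1\le m\le n$.

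The main obstacle I expect is really just bookkeeping -- keeping straight which multiplicities live in which of the multisets $S_{i-1}$, $[S_{i-1}]$, $\mu(S_{i-1})$, $S_i$ while chasing the single element $y$ -- rather than any deep combinatorial insight. The crucial hypothesis $i\ge 1$ enters only once, in allowing the rewrite $S_i=f(S_{i-1})$ that opens the count of $d$.
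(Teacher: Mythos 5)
Your proof is correct and follows essentially the same route as the paper's: you identify the element $y$ of $S_i$ whose multiplicity realizes $\max S_{i+1}$ (the paper's $m$), bound that multiplicity through the decomposition $S_i=[S_{i-1}]+\mu(S_{i-1})$, and close the squeeze with the pigeonhole fact $|[S_{i-1}]|\le\max S_{i-1}\le\max S_i$. The only cosmetic difference is that you run the chain of inequalities through $d$ and $M_{i-1}=\max S_{i-1}$ where the paper tracks $n=|[S_{i-1}]|$; the forced equalities are identical.
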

\begin{proof}
    The plan is to nail down $[S_{i-1}]$ with one bound, nail down $\mu(S_{i-1})$ with a second, and then put them together giving a form for $S_{i-1}$ itself.

    Since $i\not=0$, $S_{i-1}$ exists and $S_i=[S_{i-1}]+\mu(S_{i-1})$. Let $n=|[S_{i-1}]|=|\mu(S_{i-1})|$, the amount of distinct elements in $S_{i-1}$. It follows by pigeon-holing some element of $S_{i-1}$ is at least $n$ (just as if you had ten people in a room all different ages some person must be at least ten years old). And by the inclusion $[S_{i-1}]\subseteq S_i$ we can know 
    $$\max S_i\ge \max S_{i-1}\ge n$$
    with equality throughout when $[S_i]=[S_{i-1}]=\{1,...,n\}$.
    
    Next notice $\max S_{i+1}$ must be an Adjective of $S_i$ (otherwise $\max S_{i+1} \in S_i$). So $\max S_{i+1}=\text{mult}_{S_i}(m)$ for some $m\in S_i$. Because $[S_{i-1}]$ by definition contains only distinct values we know $\text{mult}_{S_i}(m)\le |\mu(S_{i-1})|+1=n+1$. Thus
    $$\max S_{i+1}=\text{mult}_{S_i}(m)\ge n+1$$
    with equality when $\mu(S_{i-1})=n\{m\}$ for some $m\in[S_{i-1}]$.
    
    Taking our inequalities together,
    $$n+1\ge \max S_{i+1} > \max S_i \ge n$$
    implies equality in both cases. Consequently $[S_{i-1}]=\{1,...,n\}$ and $\mu(S_{i-1})=n\{m\}$ which together force $S_{i-1}=m\{1,...,n\}$. The constraint ``$m\in S_{i-1}$" becomes ``$1\le m\le n$".
\end{proof}

\begin{corollary}
    The Height of a multiset is either equal to, or the increment of, its Parent's Height (excepting only when the Parent is the starting value).
\end{corollary}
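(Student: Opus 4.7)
The plan is to show the Corollary follows almost immediately from the preceding Lemma by combining two observations: a trivial lower bound ruling out Height decreases, plus a refinement extracted from the Lemma's proof that pins down by how much the Height can increase.

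First I would establish that $\max S_{i+1} \ge \max S_i$ in general, for any $i \ge 0$. Since $f(S_i) = [S_i] + \mu(S_i)$, the set of Nouns $[S_i]$ sits inside $S_{i+1}$ as a submultiset, and the maximum of $[S_i]$ is just $\max S_i$. So a Child is never strictly shorter than its Parent, and the Height is non-decreasing along any Family Lineage. This disposes of the ``equal or smaller'' direction.

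Next I would invoke the Lemma in the contrapositive-plus-refinement form. Assuming $i \ge 1$, suppose $\max S_{i+1} > \max S_i$. The Lemma not only produces the parametrization $S_{i-1} = m\{1,\dots,n\}$, but its proof actually shows equality throughout the chain
\[
n+1 \ge \max S_{i+1} > \max S_i \ge n,
\]
i.e.\ forces $\max S_i = n$ and $\max S_{i+1} = n+1$. So whenever the Height strictly grows past the first Generation, it grows by exactly one. Combined with the previous paragraph, this gives $\max S_{i+1} \in \{\max S_i,\, \max S_i + 1\}$ for all $i \ge 1$, which is precisely the Corollary's claim. The parenthetical exception covers the case $i = 0$, where the Lemma's hypothesis $i \ge 1$ fails and $\max S_1$ could in principle exceed $\max S_0$ by more than one.

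There is no real obstacle here: the Lemma does all the work, and the only subtlety is noticing that we need to cite the \emph{equality case} of its proof (that the strict jump is exactly $+1$), not merely the existential form of its statement. I would make this explicit in one sentence so the reader doesn't have to re-derive it.
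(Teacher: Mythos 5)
Your proof is correct and is essentially the argument the paper intends (the Corollary is stated without proof, as an immediate consequence of Lemma 4.1): the inclusion $[S_i]\subseteq S_{i+1}$ gives non-decrease, and the equality forced in the chain $n+1\ge \max S_{i+1}>\max S_i\ge n$ from the Lemma's proof shows any strict increase past the first Generation is exactly $+1$. Your explicit note that one must cite the equality case of the Lemma's proof, not just its statement, is a fair and useful observation.
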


\begin{lemma}
    If $i\ge 1$ then $|S_{i+1}|\ge |S_i|$. In other words, a Child is always larger (in order) than its Parent except possibly only when the Parent is the starting value.
\end{lemma}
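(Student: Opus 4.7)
The plan is to rewrite $|S_{i+1}|$ and $|S_i|$ in a way that makes the inequality almost tautological. The key identity is that for any multiset $R$, the Child $f(R)=[R]+\mu(R)$ has size $|[R]|+|\mu(R)|=2\,|[R]|$, since the multiset of multiplicities $\mu(R)$ contains exactly one entry per distinct element of $R$. So for any $j\ge 1$ we get the clean formula $|S_j|=2\,|[S_{j-1}]|$.

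Applying this at $j=i$ and $j=i+1$ reduces the lemma to showing $|[S_i]|\ge|[S_{i-1}]|$ whenever $i\ge 1$. This, in turn, follows because the Nouns of $S_{i-1}$ appear verbatim inside its Child: from $S_i=[S_{i-1}]+\mu(S_{i-1})$ we immediately read off the set containment $[S_i]\supseteq[S_{i-1}]$, whence the cardinalities compare the same way. Chaining the two observations gives
$$|S_{i+1}|=2\,|[S_i]|\ge 2\,|[S_{i-1}]|=|S_i|.$$

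I would also remark on why the hypothesis $i\ge 1$ is genuinely needed: the formula $|S_j|=2|[S_{j-1}]|$ only applies once $S_j$ is known to be a Child, so the starting value $S_0$ is exempt (indeed, one may begin with an $S_0$ much larger than $2\,|[S_0]|$, in which case $|S_1|<|S_0|$ is possible). There is no real obstacle here — the argument is just unpacking the definition of $f$ — so the main thing to get right is to present it crisply and to flag the edge case $i=0$ so the reader sees why the first Generation is genuinely special, mirroring the exception already noted in Corollary~4.1.
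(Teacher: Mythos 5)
Your proof is correct and is essentially identical to the paper's: both use the identity $|S_j|=2|[S_{j-1}]|$ together with the inclusion $[S_{i-1}]\subseteq S_i$ to get $|[S_i]|\ge|[S_{i-1}]|$. The only difference is that you spell out the steps (and the role of the $i\ge 1$ hypothesis) more explicitly than the paper does.
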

\begin{proof}
    The inclusion $[S_{i-1}]\subseteq S_i$ tells us $|[S_i]|\ge |[S_{i-1}]|$. The lemma follows from $|S_{i+1}|=2|[S_i]|$ and $|S_{i}|=2|[S_{i-1}]|$
\end{proof}

\begin{lemma}
    If $\max S_{i+1}>\max S_i$ and $i\ge 2$ then either $S_{i-1}=\{1,...,n\}$ or $S_{i-1}=2\{1,...,n\}$.
\end{lemma}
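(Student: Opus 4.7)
The plan is to refine the previous lemma by exploiting the extra Generation available when $i \geq 2$. The first lemma already hands us $S_{i-1} = m\{1,\ldots,n\}$ with $1 \leq m \leq n$; all that remains is to sharpen the upper bound on $m$ from $n$ down to $2$.

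First I would write $S_{i-1} = [S_{i-2}] + \mu(S_{i-2})$, which is legal precisely because $i \geq 2$ forces $S_{i-2}$ to exist. Setting $k = |[S_{i-2}]| = |\mu(S_{i-2})|$, the concatenation gives $|S_{i-1}| = 2k$. On the other hand, directly from the form $m\{1,\ldots,n\}$ we have $|S_{i-1}| = mn$, so $k = mn/2$.

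Next I would note that $[S_{i-2}]$ is a set of \emph{distinct} positive integers, all of which appear among the elements of $S_{i-1}$. Since $[S_{i-1}] = \{1,\ldots,n\}$, this forces $[S_{i-2}] \subseteq \{1,\ldots,n\}$ and hence $k \leq n$. Combining with $k = mn/2$ yields $mn/2 \leq n$, i.e.\ $m \leq 2$. The two surviving values $m = 1$ and $m = 2$ give the two parametrizations in the statement.

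There is no real obstacle here; the entire argument is a one-line cardinality pinch once one notices that $|S_{i-1}|$ is counted twice — once as $mn$ from the ancestral form, once as $2|[S_{i-2}]|$ from being a Child. The only mild care needed is to keep multiset versus set distinctions straight when asserting $[S_{i-2}] \subseteq [S_{i-1}]$.
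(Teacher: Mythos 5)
Your argument is correct and is essentially the paper's: the paper invokes its Lemma 4.2 ($|S_i|\ge|S_{i-1}|$ for $i\ge 2$) to get $2n=|S_i|\ge mn$, whereas you inline that lemma's proof by computing $|S_{i-1}|=2|[S_{i-2}]|$ and bounding $|[S_{i-2}]|\le n$ via the inclusion $[S_{i-2}]\subseteq[S_{i-1}]=\{1,\ldots,n\}$ — the same cardinality pinch. No gap.
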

\begin{proof}
    Lemma 4.1 applies so $S_{i-1}=m\{1,...,n\}$ for some $1\le m\le n$. Consequently $S_i=[S_{i-1}]+\mu(S_{i-1})=\{1,...,n\}+n\{m\}$. The bound $i\ge 2$ implies $S_{i-1}$ is not the starting value -- and therefore Lemma 4.2 gives us
    $$|S_i|=2n\ge mn=|S_{i-1}|$$
    implying $m=1,2$.
\end{proof}

\begin{lemma}
    If $\max S_{i+1} > \max S_i$ and $i\ge 3$ then $S_{i-1}$ is one of 
    $$\{1,2\},\quad \{1,2,3,4\},\quad \{1,2,3,4,5,6\},\quad \{1,1,2,2,3,3\}.$$
    In other words, if a multiset past the fourth Generation is Taller than its Parent, its Grandparent must be one of the former multisets.
\end{lemma}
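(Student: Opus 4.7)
The plan is to apply Lemma 4.3 to parametrize $S_{i-1}$ and then extract an extra parity constraint that the strengthened hypothesis $i \ge 3$ makes available. Lemma 4.3 provides $S_{i-1} = m\{1,\ldots,n\}$ with $m \in \{1,2\}$. Since $i \ge 3$ we have $S_{i-2} = f(S_{i-3})$, and the identity $|f(R)| = 2|[R]|$ (already used in the proof of Lemma 4.2) forces $|S_{i-2}|$ to be even. Lemma 4.2 itself also yields $|S_{i-1}| \ge |S_{i-2}|$. Unpacking $S_{i-1} = [S_{i-2}] + \mu(S_{i-2})$ in each of the two cases for $m$ will then pin $n$ down.

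In the case $m = 1$, each element of $S_{i-1} = \{1,\ldots,n\}$ has multiplicity $1$, so writing $T := [S_{i-2}]$ and $T^c := \mu(S_{i-2})$ presents $\{1,\ldots,n\}$ as a disjoint union $T \sqcup T^c$ with $|T| = |T^c|$; this forces $n$ to be even and $|T^c| = n/2$. Since $|S_{i-2}| = \sum_{j \in T^c} j \ge 1 + 2 + \cdots + n/2 = n(n+2)/8$, the bound $|S_{i-2}| \le n$ yields $n \le 6$. Hence $n \in \{2,4,6\}$, giving the three multisets $\{1,2\}$, $\{1,2,3,4\}$, and $\{1,2,3,4,5,6\}$.

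In the case $m = 2$, each element of $S_{i-1} = 2\{1,\ldots,n\}$ has multiplicity $2$. The set $[S_{i-2}] \subseteq [S_{i-1}] = \{1,\ldots,n\}$ has cardinality $|S_{i-1}|/2 = n$, so $[S_{i-2}] = \{1,\ldots,n\}$ and hence $\mu(S_{i-2}) = \{1,\ldots,n\}$ as well, giving $|S_{i-2}| = n(n+1)/2$. The bound $n(n+1)/2 \le 2n$ forces $n \le 3$; the parity requirement rules out $n \in \{1,2\}$ (where $n(n+1)/2$ is $1$ or $3$, both odd); and $n = 3$ delivers $S_{i-1} = \{1,1,2,2,3,3\}$.

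The delicate point is this parity argument, which is what eliminates the otherwise-surviving spurious candidate $\{1,1,2,2\}$ (the case $m = n = 2$): its two possible preimages $\{1,1,2\}$ and $\{1,2,2\}$ both have odd size and so cannot themselves be Children. Strengthening the hypothesis from $i \ge 2$ (in Lemma 4.3) to $i \ge 3$ here is precisely what makes $S_{i-2}$ a Child, licensing the evenness of $|S_{i-2}|$.
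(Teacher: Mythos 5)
Your proposal is correct and follows the paper's proof in all essentials: invoke Lemma 4.3, bound $n$ via the triangular-number lower bound on $|S_{i-2}|$ together with Lemma 4.2, and use $i\ge 3$ to kill $n=1,2$ in the $m=2$ case. Your parity phrasing of that last step ($S_{i-2}=f(S_{i-3})$ has even order, but $|S_{i-2}|=n(n+1)/2$ is odd for $n=1,2$) is just an abstract restatement of the paper's observation that the Parents of $\{1,1\}$ and $\{1,1,2,2\}$ are all odd in order and hence themselves Parentless.
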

\begin{proof}
    Lemma 4.3 narrows $S_{i-1}$ down to two possibilities.
    
    In the case $S_{i-1}=\{1,...,n\}$ the number $n$ must be even since $|S_{i-1}|=n=2|[S_{i-2}]|$. We will bound $n$ by the fact $$|S_{i-2}|=\sum_{x\in \mu(S_{i-2})} x.$$
    Since $\mu(S_{i-2})\subseteq S_{i-1}=\{1,...,n\}$ and since $\mu(S_{i-2})$ contains $\frac{n}{2}$ elements the former tells us $|S_{i-2}|\ge 1+2+...+\frac{n}{2}=\frac{n(n+2)}{8}$ (the latter equality is the triangular number formula $T_k=\frac{k(k+1)}{2}$ with $k=\frac{n}{2}$). Using Lemma 4.2 again,
    $$n=|S_{i-1}|\ge |S_{i-2}|\ge \frac{n(n+2)}{8}$$
    implies $\frac{n+2}{8}\le 1$ or equivalently $n\le 6$. Thus in this first case $n\in \{2,4,6\}.$
    
    Alternatively if $S_{i-1}=2\{1,...,n\}$ we create a similar bound using
    $$2n=|S_{i-1}|\ge|S_{i-2}|\ge 1 + 2+...+n=\frac{n(n+1)}{2}$$
    yielding $n\le 3$. But we can do better. The multisets $S_{i-1}=\{1,1\}$ and $S_{i-1}=\{1,1,2,2\}$ (corresponding to $n=1$ and $n=2$) have no Grandparents because their Parents,
    $$\{1\}\quad\text{and}\quad \{1,1,2\},\ \{1,2,2\},$$
    are all odd in order. But $S_{i-1}$ must have Grandparents by $i\ge 3$ implying $n\not=1,2$. This leaves $n=3$ and $S_{i-1}=\{1,1,2,2,3,3\}.$
\end{proof}

We are ready for part 2. The Family Trees of the four possible $S_{i-1}$'s of the previous lemma were worked out in full. And only three Family Trees are actually needed since $\{1,2\}$ appears in $\{1,1,2,2,3,3\}$'s Ancestry.

\begin{center}
    \includegraphics[scale=0.24]{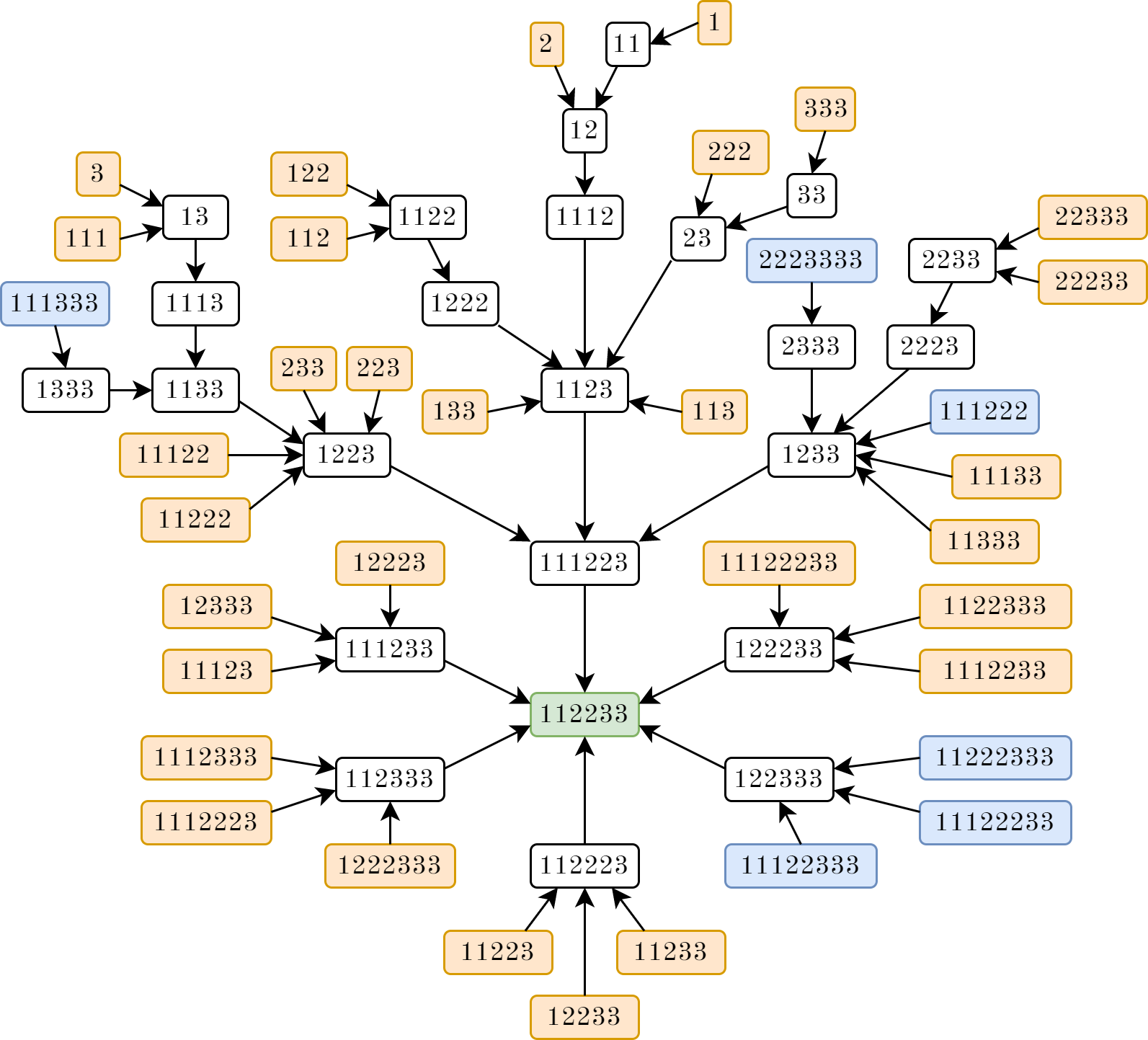}
    
    \textbf{Figure 4.1}
    
    \ 
    
    \includegraphics[scale=0.27]{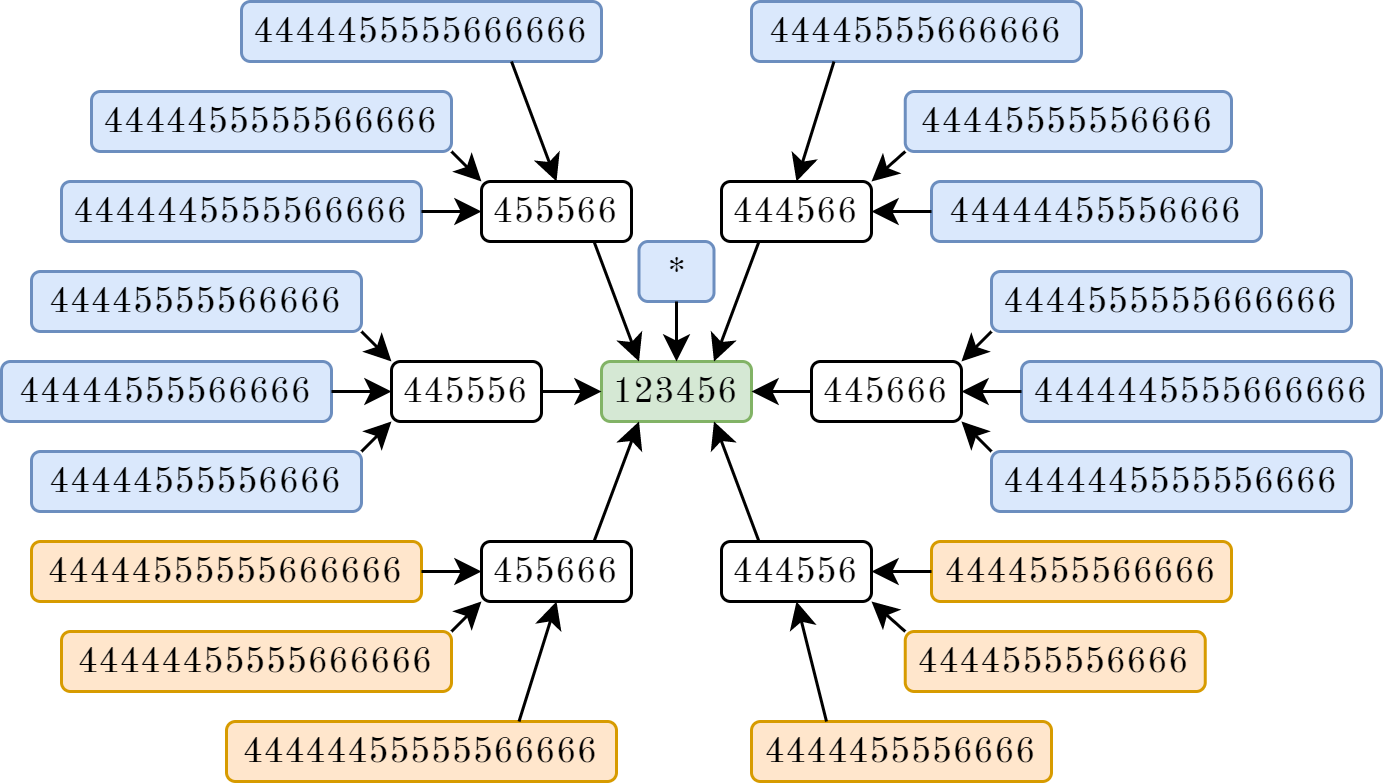}
    
    \textbf{Figure 4.2}
    
    \ 
    
    \includegraphics[scale=0.27]{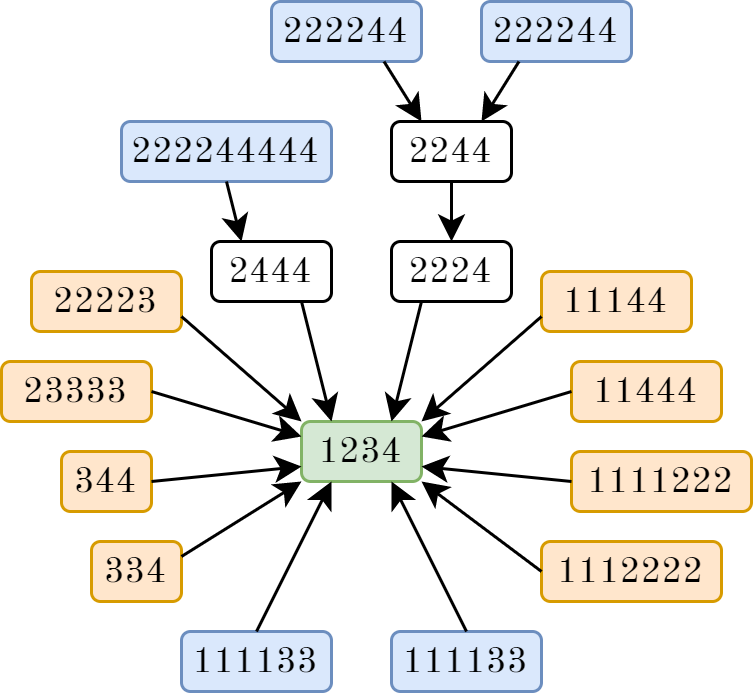}
    
    \textbf{Figure 4.3}
    
    Integer Notation is being used (e.g. ``$111333$" stands for ``$\{1,1,1,3,3,3\}$"). Orange nodes are odd in order (and therefore have no Parents). Blue nodes have so many elements Nouns cannot be selected distinctly (again meaning no Parents).
\end{center}

The figure lets us enumerate all starting values with Tallest Descendants appearing past the fourth Generation. 

\begin{theorem}
    For all $S_0\in\mathbb{N}_{+}^{*}$ the maximum Height appears by $S_8$ and, for all but finitely many $S_0$, by $S_3$ (I.e. $\max S_i = \max S_8$ for $i\ge 8$). The exceptions are given exhaustively by the entries (and the Descendants) of
    \begin{center}
        \begin{tabular}{c|c}
            max at & $S_0$ \\ \hline
            $S_4$ & $1112223^*,\ 22224444,\ 4444445555556666^{**}$ \\ \hline
            $S_5$ & $11122,\ 111222,\ 111333,\ 11222,\ 113,\ 11333,\ 133,\ 222244,\ 223,\ 224444,\ 233$ \\ \hline
            $S_6$ & $222,\ 222333$ \\ \hline
            $S_7$ & $111333,\ 112,\ 122,\ 2,\ 22233,\ 22333,\ 333$ \\ \hline
            $S_8$ & $1,\ 111,\ 3$ \\ 
        \end{tabular}
    \end{center}
\end{theorem}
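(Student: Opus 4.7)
The plan is to leverage Lemma 4.4 together with an inspection of the three Family Trees of Figures 4.1, 4.2, 4.3. By Corollary 4.1 the sequence $(\max S_i)_{i \ge 1}$ is non-decreasing and changes by at most $1$ per step, so the overall maximum Height is attained at a uniquely latest step of growth
\[ k := \max\{\, j \ge 1 : \max S_j > \max S_{j-1} \,\}, \]
(or $k = 0$ if this set is empty), and then the maximum persists at every larger index. Thus ``the maximum appears by $S_j$'' amounts to $k \le j$, and our task is to bound $k \le 8$ for every $S_0$ and to identify precisely which $S_0$ realize $k \ge 4$.

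If $k \ge 4$ then applying Lemma 4.4 with $i = k-1 \ge 3$ forces $S_{k-2}$ to be one of the four multisets $\{1,2\}$, $\{1,2,3,4\}$, $\{1,2,3,4,5,6\}$, $\{1,1,2,2,3,3\}$. Since $\{1,2\}$ already appears inside the Family Tree of $\{1,1,2,2,3,3\}$, the full Ancestry of $S_{k-2}$ lies within the union of the three trees displayed in Figures 4.1--4.3. These trees are finite: Lemma 4.2 keeps the size of an Ancestor weakly smaller than that of any Descendant, and the ``orange'' odd-order nodes and ``blue'' too-many-Adjective nodes admit no pre-image under $f$ at all. By direct inspection the deepest node in any of the three trees sits $6$ generations above its root, which forces $k - 2 \le 6$ and hence $k \le 8$, settling the first claim.

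For the enumeration of exceptions, note that $k \ge 4$ holds \textit{iff} the Family Lineage of $S_0$ passes through one of the four ``taller-generating'' multisets at generation $k-2 \ge 2$. One therefore traverses the three Family Trees at depths $d = 2, 3, 4, 5, 6$: each Ancestor $S_0$ appearing at depth $d$ in the tree rooted at $R$ satisfies $S_d = R$, and computing the step at which its Height last increments pins down the value of $k$ for that $S_0$. Collecting these by $k$-value produces exactly the rows of the table. Conversely every $S_0$ failing to appear as a depth-$\ge 2$ Ancestor in any of the three trees has $k \le 3$, so its maximum is attained by $S_3$.

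The main obstacle is the combinatorial inversion of $f$ at every node of the three trees: given a multiset $T$ one must enumerate all $S$ with $[S] + \mu(S) = T$, which amounts to choosing which elements of $T$ play the role of Nouns versus Adjectives, subject to the consistency requirement that $\mu(S)$ equal the chosen Adjective multiset. The figures encode the outcome of this exercise; the body of the proof consists of verifying the figures are correct and complete, and then reading off the table from them.
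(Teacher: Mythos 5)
Your proposal is correct and follows essentially the same route as the paper: invoke Lemma 4.4 to force $S_{k-2}$ into one of the four listed multisets whenever the Height grows at step $k\ge 4$, then rely on the completeness and finiteness of the Family Trees (Figures 4.1--4.3, with $\{1,2\}$ absorbed into the $\{1,1,2,2,3,3\}$ tree) to enumerate every ancestor and read off the table. Your write-up is more explicit about the index bookkeeping and about what the figures must encode (the inversion of $f$ at each node), but the underlying argument is the same.
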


The entries are written as integers for easier reading (e.g. ``$11333$" stands for $\{1,1,3,3,3\}$).

* The entry ``$1112223$" in particular stands for a Family of $15$ multisets: the Grandparents of ``$112233$" which themselves have no Parents. The Family is also equivalent to $f^{-2}(\{1,1,2,2,3\})-f^{-1}(1,1,1,2,2,3)$.

** The entry ``$4444445555556666$" in particular stands for a Family of $18$ multisets: the Grandparents of ``$123456$".

\begin{proof}
    Lemma 2.4 ensures all such exceptions must pass through one of the four given $S_{i-1}$ values. The Family Trees given above are complete since a multiset has no Parent if
    \begin{enumerate}
        \item its order is odd or
        \item there are too many elements to assign Nouns distinctly (i.e. if $|S|>2|[S]|$).
    \end{enumerate}
\end{proof}

Thus ends part 2 of this section. Part 3 is the shortest.

\begin{corollary}
    The Inventory Game ends in a Loop for all $S_0\in\mathbb{N}_{+}^*$.
\end{corollary}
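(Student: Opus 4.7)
The strategy is to combine the Height bound supplied by the preceding Theorem with an analogous bound on multiplicities, thereby confining the sequence to a finite subset of $\mathbb{N}_+^*$; a deterministic iteration on a finite set must cycle.

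First I would invoke the preceding Theorem and set $M=\max S_8$, so that $\max S_i = M$ for every $i\ge 8$. This immediately caps the Nouns: $[S_i]\subseteq\{1,\ldots,M\}$ for all $i\ge 8$, since every element of the multiset is bounded by its own maximum.

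Next I would cap the Adjectives. Because $\mu(S_i)$ is a summand of $f(S_i)=[S_i]+\mu(S_i)=S_{i+1}$, we have $\max\mu(S_i)\le \max S_{i+1}=M$, so $\text{mult}_{S_i}(x)\le M$ for every $x\in[S_i]$. Consequently, each $S_i$ with $i\ge 8$ is a multiset over the alphabet $\{1,\ldots,M\}$ with multiplicities drawn from $\{0,1,\ldots,M\}$, of which there are at most $(M+1)^M$ possibilities.

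Finally, pigeonhole finishes the job: the tail $(S_i)_{i\ge 8}$ takes only finitely many values, hence $S_j=S_k$ for some $8\le j<k$. Since $f$ is deterministic, the sequence repeats with period $k-j$ from index $j$ onward, which is precisely an Inventory Loop. The main obstacle, such as it is, lies only in noticing that the Adjectives inherit their ceiling from the next Generation via the formula $f(S)=[S]+\mu(S)$; once both Nouns and multiplicities are pinned down, the argument is pure counting.
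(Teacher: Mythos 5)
Your proposal is correct and follows essentially the same route as the paper: both use Theorem 4.5's stabilization of the Height at $S_8$ to confine the tail of the sequence to a finite subset of $\mathbb{N}_+^*$ and then apply pigeonhole plus determinism of $f$. The only cosmetic difference is how the finite set is counted (you bound each multiplicity by $M$ to get $(M+1)^M$ states, while the paper bounds the order by $2\max S_8$); the underlying idea is identical.
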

\begin{proof}
    Since $\max S_i=\max S_8$ for $i\ge 8$, the proceeding multisets are bounded in length (in particular $|S_i|\le 2\max S_8$). Thus the Descendants can take on only $(\max S_8)^{2\max S_8}$.
\end{proof}

\begin{corollary}
    This gives us our first (and worst) pre-period bound: $O(M^{2M})$ where $M=\max S_1.$
\end{corollary}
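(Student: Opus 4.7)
The plan is to convert the previous corollary's finite-state observation into a quantitative count, using Corollary 4.1 to control how fast the Height can rise between $S_1$ and $S_8$.

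First I would invoke the previous corollary to fix the alphabet and length: for $i \ge 8$ we have $\max S_i = \max S_8$, and the inclusion $[S_{i-1}]\subseteq S_i$ together with $|S_{i+1}| = 2|[S_i]|$ gives $|S_i| \le 2\max S_8$. So every multiset from $S_8$ onward is drawn from the finite collection of multisets whose elements lie in $\{1,\ldots,\max S_8\}$ and whose order is at most $2\max S_8$.

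Next I would translate the bound $\max S_8$ into a bound on $M=\max S_1$. Corollary 4.1 says the Height can increase by at most one per Generation, except possibly at the transition from $S_0$ to $S_1$. Since there are seven transitions from $S_1$ to $S_8$ and none of them involves the starting value as Parent, we get
\[
\max S_8 \;\le\; M+7.
\]
Consequently the multisets $S_8, S_9, S_{10},\ldots$ all live in a universe of size at most
\[
(M+7)^{2(M+7)},
\]
a crude count obtained by treating each of the at most $2(M+7)$ positions as independently choosing one of $M+7$ values.

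Finally I would apply the pigeonhole principle: within any stretch of $(M+7)^{2(M+7)}+1$ consecutive Generations past $S_8$ some value must repeat, forcing entry into a Loop. Hence the pre-period is at most $8+(M+7)^{2(M+7)}$, and since $(M+7)^{2(M+7)} = M^{2M}\cdot(1+7/M)^{2M}\cdot(M+7)^{14} = M^{2M+O(1)}$, this is $O(M^{2M})$ in the asymptotic sense used in the statement. The only mild obstacle is precisely this asymptotic bookkeeping — the bound is intentionally loose since the authors flag it as the \emph{worst} bound to be sharpened later — so no new combinatorial work is needed beyond the two corollaries already in hand.
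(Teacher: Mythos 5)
Your proposal is correct and follows essentially the same route as the paper: bound $\max S_8\le M+7$ via the Height-increment corollary, count the finite universe of multisets available past $S_8$, and apply pigeonhole. You are somewhat more explicit about the counting and about the (shared) looseness of absorbing $(M+7)^{2(M+7)}$ into $O(M^{2M})$, but there is no substantive difference from the paper's two-sentence argument.
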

\begin{proof}
    Since Height at most increments we know $\max S_8\le \max S_1 +7\in O(M)$. Past $S_8$, all multisets are one of $M^{2M}$ values.
\end{proof}

\section{Enumerating Cycles}

It turns out easier to create another game having cycles corresponding to the those of the Inventory Game and then to find all the cycles in the new game. This new game is played on the Adjectives of the Inventory Game.

Firstly, we define the new Parent-Child relationship with two functions:
\begin{enumerate}
    \item $\mu_+(S)=\{\text{mult}_S(x)+1:x\in[S]\}=\{m+1:m\in\mu(S)\}$
    \item $g_n(S)= \mu_+(S)+(n-|\mu_+(S)|)\{1\}$
\end{enumerate}

\begin{lemma}
    Every cycle under $f$ corresponds to a cycle under some $g_n$. In particular if 
    $$S_0\overset{f}{\rightarrow}S_1\overset{f}{\rightarrow}...\overset{f}{\rightarrow}S_k=S_0$$
    then
    $$\mu(S_0)\overset{g_n}{\rightarrow}\mu(S_1)\overset{g_n}{\rightarrow}...\overset{g_n}{\rightarrow}\mu(S_0)=\mu(S_k).$$
    with $n=|[S_0]|$
\end{lemma}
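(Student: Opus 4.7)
The plan is to directly compute $\mu(S_{i+1})$ from the identity $f(S_i)=[S_i]+\mu(S_i)$ and verify it matches $g_n(\mu(S_i))$. The cornerstone observation will be that along any $f$-cycle the set of Nouns $[S_i]$ is invariant, so every Adjective is itself a Noun and the index $n=|[S_0]|$ is unambiguous.

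I would first establish the Noun invariance. Applying $[\cdot]$ to $f(S)=[S]+\mu(S)$ gives $[f(S)]=[S]\cup[\mu(S)]\supseteq [S]$, so iterating around the cycle yields
$$[S_0]\subseteq [S_1]\subseteq\cdots\subseteq [S_k]=[S_0],$$
forcing equality throughout. Let $N:=[S_0]=\cdots=[S_k]$ and $n=|N|$. As a bonus, the inclusion $[f(S_i)]\supseteq [\mu(S_i)]$ plus $[S_{i+1}]=N$ give $[\mu(S_i)]\subseteq N$ for each $i$.

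Next I would unpack $\mu(S_{i+1})$ element by element. For $x\in N=[S_{i+1}]$, the set $[S_i]$ contributes exactly one copy of $x$ while $\mu(S_i)$ contributes $\text{mult}_{\mu(S_i)}(x)$ copies, so
$$\text{mult}_{S_{i+1}}(x)=1+\text{mult}_{\mu(S_i)}(x).$$
Splitting the multiset $\mu(S_{i+1})=\{\text{mult}_{S_{i+1}}(x):x\in N\}$ according to whether $x$ is or is not an Adjective of $S_i$, the first group contributes values $1+\text{mult}_{\mu(S_i)}(x)\ge 2$, which collectively form $\mu_+(\mu(S_i))$ by definition, while the second group contributes the value $1$ exactly $n-|[\mu(S_i)]|$ times. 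Since the increment map $m\mapsto m+1$ is a bijection, $|\mu_+(\mu(S_i))|=|[\mu(S_i)]|$, so the padding count matches and the total is precisely $g_n(\mu(S_i))$.

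The only delicate step is the Noun invariance: in a generic Family Lineage the Noun set can strictly grow (that was why $[S_i]\subseteq [S_{i+1}]$ was used above), so the cyclic hypothesis $S_k=S_0$ is essential. Once that is secured, the rest is a clean unpacking of definitions, and iterating the per-step identity $\mu\circ f=g_n\circ\mu$ around the cycle gives the claimed $g_n$-orbit on $\mu(S_0),\mu(S_1),\ldots,\mu(S_k)$.
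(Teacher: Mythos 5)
Your proposal is correct and follows essentially the same route as the paper: establish $[S_0]=\cdots=[S_k]$ from the inclusion chain forced by $S_k=S_0$, then unpack $\mu([S_i]+\mu(S_i))$ into $\mu_+(\mu(S_i))$ plus the right number of padding $1$'s. You merely spell out element-by-element the identity $\mu(R+A_i)=\mu_+(A_i)+(n-|\mu_+(A_i)|)\{1\}$ that the paper asserts more tersely.
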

\begin{proof}
The rule $S_{i+1}=f(S_i)=[S_i]+\mu(S_i)$ gives us an inclusion chain
$$[S_0]\subseteq[S_1]\subseteq...\subseteq[S_k]=[S_0]$$
forcing $[S_0]=[S_1]=...=[S_k]$. So let $R=[S_0]=...=[S_k]$ and we get a new rule
$$S_{i+1}=R+\mu(S_i).$$ 
For easier further reading, we use a change of variables: $A_i=\mu(S_i)$ (with $A_i$ for $A$djectives!). The the new rule becomes $S_{i+1}=R+A_i$. Taking this together with the fact that $[A_i]\subset [S_{i+1}] = R$ we deduce
$$A_{i+1}=\mu(S_{i+1})=\mu(R+A_i)=\mu_+(A_i)+k\{1\}$$
where $k=|R|-|[A_i]|$. Letting $n=|R|$ and substituting $|[A_i]|=|\mu(A_i)|=|\mu_+(A_i)|$ we have
$k=n-|\mu_+(A_i)|$ which by definition means $A_{i+1}=g_n(A_i)$.
\end{proof}

To further investigate $g_n$ we should place it in its natural habitat. Let $T_n^*\subset \mathbb{N}_+^*$ be the set of all order $n$ multisets whose totals are twice their order. In other words
$$T_n^*=\{S\in \mathbb{N}_+^*:2|S|=2n=\sum_{x\in S}x\}.$$

\begin{lemma}
    Our function $g_n$ sends any multiset of $n$ elements into $T_n^*$. That is, if $|S|=n$ then $g_n(S)\in T_n^*$.
\end{lemma}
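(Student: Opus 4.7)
The plan is to verify directly from the definition of $T_n^*$ that $g_n(S)$ satisfies its two defining equalities: $|g_n(S)|=n$ and $\sum_{x\in g_n(S)} x = 2n$. Both reduce to short book-keeping identities once two facts about $\mu_+$ are recorded: namely $|\mu_+(S)|=|[S]|$ (because $\mu_+$ produces exactly one element per distinct value of $S$), and $\sum_{x\in\mu_+(S)}x=|S|+|[S]|$ (because each summand $\text{mult}_S(x)+1$ contributes $\text{mult}_S(x)$ to $|S|$ and one additional unit per element of $[S]$).

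First I would check the cardinality. By the definition $g_n(S)=\mu_+(S)+(n-|\mu_+(S)|)\{1\}$ and the concatenation rule for ``$+$'', we have $|g_n(S)| = |\mu_+(S)|+(n-|\mu_+(S)|)=n$ immediately, so $g_n(S)$ has order $n$ as required. This also forces the implicit side condition $n-|\mu_+(S)|\ge 0$, which is valid because $|\mu_+(S)|=|[S]|\le |S|=n$, so the concatenation of copies of $\{1\}$ makes sense as a multiset in $\mathbb{N}_+^*$.

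Second I would compute the total. Using the identity $\sum_{x\in\mu_+(S)}x=\sum_{x\in[S]}(\text{mult}_S(x)+1)=|S|+|[S]|=n+|[S]|$, and noting that adjoining $n-|\mu_+(S)|=n-|[S]|$ copies of $1$ contributes exactly $n-|[S]|$ to the total, the elements of $g_n(S)$ sum to $(n+|[S]|)+(n-|[S]|)=2n$. This is the second equality in the definition of $T_n^*$, completing the verification.

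There is no real obstacle: the lemma is essentially a bookkeeping calculation, and the only point worth flagging is the substitution $|\mu_+(S)|=|[S]|$, which is what allows the added $\{1\}$'s to cancel the $|[S]|$ excess in the sum of $\mu_+(S)$ and deliver exactly $2n$.
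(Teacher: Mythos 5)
Your proof is correct and follows essentially the same route as the paper: verify $|g_n(S)|=n$ via $|\mu_+(S)|=|[S]|$, then compute $\sum_{x\in\mu_+(S)}x=|S|+|[S]|$ and observe the adjoined $1$'s bring the total to $2n$. The only addition is your explicit check that $n-|\mu_+(S)|\ge 0$, which the paper leaves implicit.
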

\begin{proof}
    Suppose $|S|=n$. Noting firstly $|\mu_+(S)|=|[S]|$, the order
    $$|g_n(S)|=|\mu_+(S)|+n-|[S]|=n$$
    is correct. Proceedingly observe
    $$\sum_{m\in\mu_+(S)}m=|\mu_+(S)|+\sum_{m\in\mu(S)}m=|[S]|+n.$$
    From this it follows the total,
    $$\sum_{x\in g_n(S)}x=n-|\mu_+(S)|+\sum_{m\in\mu_+(S)}m\quad =2n,$$
    is correct as well.
\end{proof}

\begin{corollary}
    Every cycle of multisets with order $2n$ corresponds to a cycle in $T_n^*$ under $g_n$.
\end{corollary}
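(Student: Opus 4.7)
The plan is to string together Lemmas 5.1 and 5.2, so the heart of the argument is simply unpacking the definitions rather than inventing new machinery.

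First, I would take an $f$-cycle $S_0 \overset{f}{\rightarrow} S_1 \overset{f}{\rightarrow} \cdots \overset{f}{\rightarrow} S_k = S_0$ in which every $|S_i|=2n$, and observe that the Inventory rule $S_{i+1}=[S_i]+\mu(S_i)$ forces $|S_{i+1}|=2|[S_i]|$. Hence $|[S_i]|=n$ for every $i$ in the cycle, and the index $|[S_0]|$ produced by Lemma 5.1 is exactly our $n$. So Lemma 5.1 hands over a $g_n$-cycle
$$\mu(S_0)\overset{g_n}{\rightarrow}\mu(S_1)\overset{g_n}{\rightarrow}\cdots\overset{g_n}{\rightarrow}\mu(S_k)=\mu(S_0).$$

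Next I would verify each $\mu(S_i)$ actually lives in $T_n^*$. Since $|\mu(S_i)|=|[S_i]|=n$, Lemma 5.2 applied to $S=\mu(S_{i-1})$ gives $g_n(\mu(S_{i-1}))=\mu(S_i)\in T_n^*$ for every $i\ge 1$. This also takes care of $\mu(S_0)=\mu(S_k)$ by wrapping around the cycle. Thus the entire orbit of $\mu$-images sits inside $T_n^*$, which is exactly what the corollary asserts.

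I don't expect any real obstacle here; the only subtlety is checking that the ``$n$'' inherited from Lemma 5.1 coincides with the ``$n$'' in ``$2n$'', and that one step is immediate from $|S_{i+1}|=2|[S_i]|$. Everything else is a direct invocation of the preceding two lemmas.
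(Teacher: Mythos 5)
Your proof is correct and matches the paper's intent exactly: the corollary is stated without an explicit proof precisely because it is the immediate combination of Lemmas 5.1 and 5.2, and you supply the one detail worth checking, namely that $|S_{i+1}|=2|[S_i]|$ forces the $n$ of Lemma 5.1 to agree with the $n$ in ``order $2n$.'' Nothing is missing.
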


Accordingly here are graphs displaying the action of $g_n$ on $T_1^*,...,T_7^*$:
\begin{center}
    \includegraphics[scale=0.27]{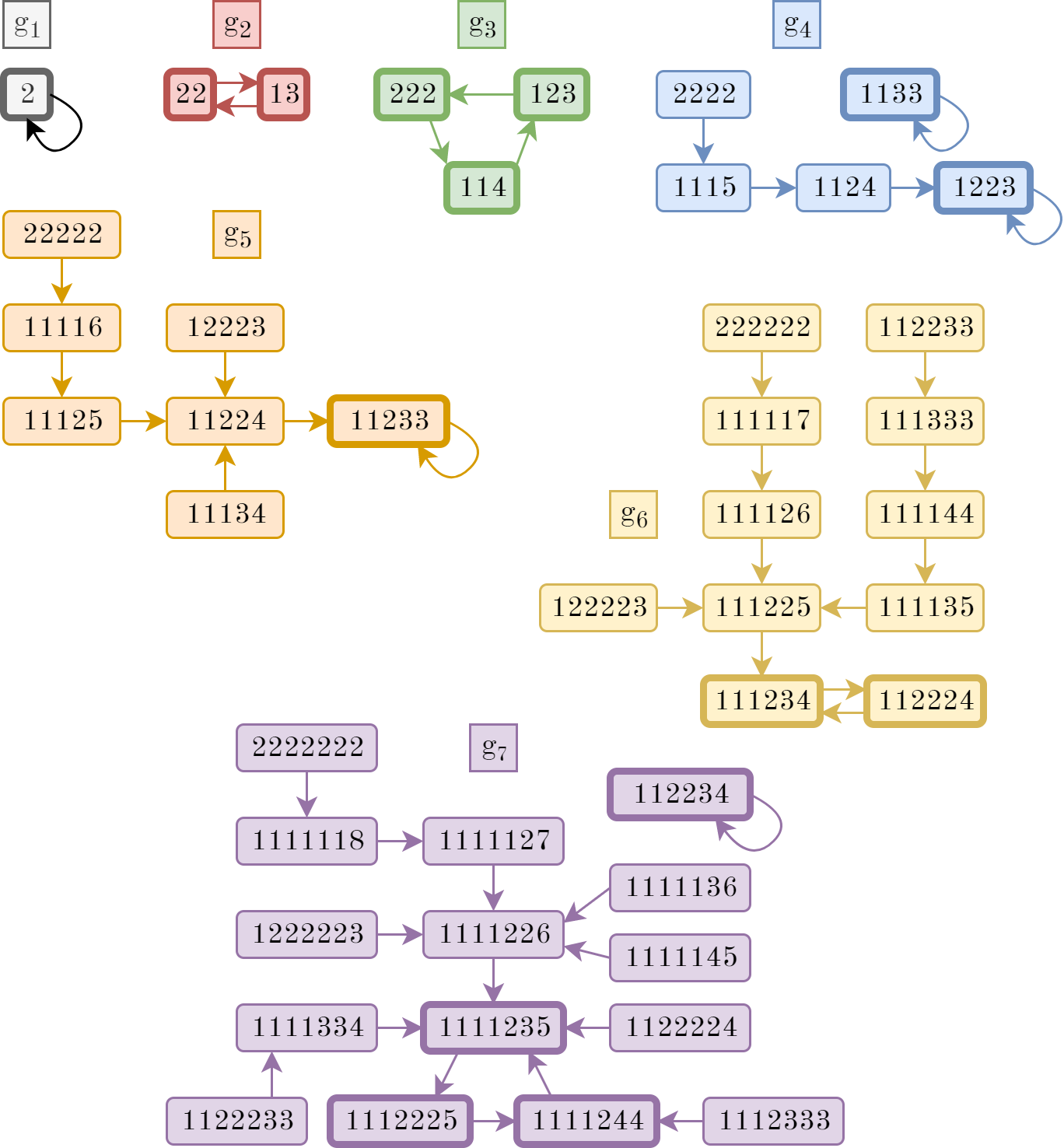}
    
    \textbf{Figure 5.1}
    
    The cycles are marked with thicker cell borders and once again we use integers to represent multisets (e.g. ``$111225$" stands for ``$\{1,1,1,2,2,5\}$").
\end{center}

\begin{theorem}
The Inventory Loops of $14$ elements or less are given exhaustively by 
\begin{center}
    \begin{tabular}{c|c}
       No. elements  & Loop(s) \\ \hline
        $2$ & $...\rightarrow 22\rightarrow ...$ \\ \hline
        $8$ & $...\rightarrow 2132231a\rightarrow ...$\\
        & $...\rightarrow 31331a1b\rightarrow ...$\\ \hline
        $10$ & $...\rightarrow 3122331a1b\rightarrow ...$\\ \hline
        $12$ & $...\rightarrow 314213241a1b\rightarrow 412223241a1b\rightarrow ...$\\ \hline
        $14$ & $...\rightarrow 413223241a1b1c\rightarrow ...$\\
        & $...\rightarrow 41421314251a1b\rightarrow 51221334151a1b\rightarrow 51222314251a1b\rightarrow ...$\\
    \end{tabular}
\end{center}
where $a,b,$ and $c$ can be any distinct whole numbers that aren't already in the Loop.
\end{theorem}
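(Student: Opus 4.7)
The plan is to use the adjective-cycle correspondence of Corollary 5.3 together with the explicit data in Figure 5.1. By Corollary 5.3, every Inventory Loop of $2n$ elements descends to a $g_n$-cycle in $T_n^*$, so a loop of at most $14$ elements gives a $g_n$-cycle with $n\le 7$ -- and Figure 5.1 already displays every cycle of $g_n$ on $T_n^*$ for exactly these values of $n$. Step one is therefore just to read the fixed points and longer cycles off the figure for each $n\le 7$.

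Step two is the lifting. Given a $g_n$-cycle $A_0\overset{g_n}{\rightarrow}\cdots\overset{g_n}{\rightarrow}A_{k-1}\overset{g_n}{\rightarrow}A_0$, I want to recover the Inventory Loops $S_0\overset{f}{\rightarrow}\cdots\overset{f}{\rightarrow}S_{k-1}\overset{f}{\rightarrow}S_0$ which map down to it. The proof of Lemma 5.1 shows that any such lift has a common noun set $R=[S_i]$ of size $n$, with $S_{i+1}=R+A_i$ and $[A_i]\subseteq R$. Consequently a lift exists if and only if $|\bigcup_i[A_i]|\le n$, in which case the \emph{free} nouns $R\setminus\bigcup_i[A_i]$ become the parameters $a,b,c,\ldots$ in the table, and each $S_{i+1}$ is reconstructed by assigning every $y\in R$ multiplicity $1+\mathrm{mult}_{A_i}(y)$ (so the free nouns always carry multiplicity $1$).

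Step three is the line-by-line verification. I would walk through Figure 5.1 and, for each cycle, either show it fails $|\bigcup_i[A_i]|\le n$ -- this is what rules out any $4$- or $6$-element Inventory Loops, since the lone cycles of $g_2$ and $g_3$ both use too many distinct adjective-values to fit inside a noun set of size $n$ -- or compute $R+A_i$ explicitly and confirm the result matches the corresponding row of the table. For instance the fixed point $\{1,1,3,3\}$ of $g_4$ yields the loop $31331a1b$, while the three-cycle of $g_7$ lifts to the displayed $14$-element three-cycle at the bottom.

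The main obstacle is really just the bookkeeping of this case analysis, together with an exhaustiveness check that Figure 5.1 lists every element of each (finite) $T_n^*$ with $n\le 7$; no new conceptual tool beyond Corollary 5.3 is needed, because the lifting criterion $|\bigcup_i[A_i]|\le n$ and the reconstruction $S_{i+1}=R+A_i$ are both forced by the proof of Lemma 5.1.
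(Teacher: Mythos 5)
Your proposal is correct and follows essentially the same route as the paper: reduce to the $g_n$-cycles of Figure 5.1 via Corollary 5.3, lift each cycle back through the correspondence of Lemma 5.1, and discard the $g_2$ and $g_3$ cycles because their adjective values cannot fit inside a noun set of size $n$ (your criterion $|\bigcup_i[A_i]|\le n$ is just a cleaner formalization of the paper's ``must contain $1,2,3$ and thus be at least order $6$'' argument). The only added value in your write-up is that you state the lifting condition and the reconstruction $S_{i+1}=R+A_i$ explicitly as an if-and-only-if, which the paper leaves implicit in the phrase ``in the manner laid out in Lemma 5.1.''
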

\begin{proof}
By the Corollary, any such Inventory loop must correspond to one of the nine cycles shown in Figure 5.1. The seven Inventory Loops above correspond to the cycles under $g_1, g_4, g_5, g_6$ and $g_7$ (in the manner laid out in Lemma 5.1). We claim the cycles under $g_2$ and $g_3$ do not correspond to any Inventory Loop.

The cycle of $g_2$ oscillates between $\{2,2\}$ and $\{1,3\}$. But this means any corresponding Inventory Loop must contain $1,2$ and $3$ and thus be at least order $6$. But the correspondence would enforce $g_2$'s Loop order $4$. Similarly, the cycle under $g_3$ suffers the same illness. Any corresponding loop would contain $1,2,3,$ and $4$ -- and would therefore be on multisets at least order $8$.
\end{proof}

We could use this same strategy on $g_8,g_9,$ and so on... But we have infinitely $g_n$ to go! We need an approach to knock out $n\ge 8$ and will create one in the next lemma (which should really be three or four lemmas but every attempt to split it felt unnatural and added confusion -- so it was left as a single super-lemma).

\begin{lemma}
    Any cycle in $T_n^*$ under $g_n$ with $n\ge 8$ contains one of
    $$\{1,...,1,2,n\},\quad \{1,...,3,n-1\},$$
    $$\{1,...,1,2,2,n-1\},\quad\{1,...,1,2,3,n-2\},$$
    $$\{1,...,1,2,2,2,n-2\},\quad\{1,...,1,2,2,3,n-3\}.$$
\end{lemma}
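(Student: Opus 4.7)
The plan is to reduce the lemma to a bounded case analysis in two stages: first, show every cycle of $g_n$ visits a multiset whose maximum is at least $n-3$; second, enumerate such near-extremal multisets and trace their short forward orbits into one of the six listed. The first stage is the heart of the argument, and also the main obstacle.

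Write $\alpha_i = \max S_i$ and $k_i = |[S_i]|$, and let $c_i = \text{mult}_{S_i}(1)$. Three identities fall directly out of the definition of $g_n$: (i) $\alpha_{i+1} = 1 + \max \mu(S_i)$; (ii) $c_{i+1} = n - k_i$, since $\mu_+(S_i) \subseteq \{2, 3, \ldots\}$ and the padding contributes $n - k_i$ ones; and (iii) $c_i$ itself, as the multiplicity of $1$, is one of the entries of $\mu(S_i)$ (for $n \ge 4$, identity (ii) already guarantees $1 \in [S_i]$ throughout any cycle). Combined with $k_i \leq \alpha_i$ and the elementary bound $\max \mu(S_i) \leq n - k_i + 1$ (the multiplicities are $k_i$ positive integers summing to $n$), these yield the cascading inequalities $\alpha_i + \alpha_{i+2} \geq n + 1$ and $c_i \geq \alpha_i - 2$. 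Now suppose for contradiction that $\alpha_i \leq n - 4$ for every $i$ in the cycle. Then $c_{i+1} = n - k_i \geq 4$, and simultaneously $c_{i+1} \leq \max \mu(S_{i+1}) = \alpha_{i+2} - 1 \leq n - 5$. For $n = 8$ this already reads $4 \leq 3$, a contradiction. For $n \geq 9$ the window $[4, n-5]$ is nonempty and the naive cascading inequality only forces $\alpha_i$ to average roughly $\sqrt{n}$, so the argument must exploit the rigid combinatorial constraints on $\mu(S_i)$: size $k_i \in [5, n-4]$, sum $n$, at least $k_{i+1} - 1 \geq 4$ distinct values, maximum $\leq n - 5$, and the specific value $n - k_{i-1}$ forced to appear by identity (iii). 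I would enumerate the short list of partition patterns satisfying these constraints and verify that the membership requirement $n - k_{i-1} \in \mu(S_i)$ cannot be satisfied consistently around the cycle, forcing $k_{i+1} \leq 4$ somewhere in contradiction with the standing assumption.

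With $\alpha_j \geq n - 3$ established somewhere in the cycle, the classification is direct. A multiset in $T_n^*$ with maximum $M$ has its remaining $n - 1$ entries summing to $2n - M$ with each entry $\geq 1$, hence is determined by a partition of the excess $n - M + 1 \leq 4$ (for $M \geq n - 3$); only $O(1)$ candidates arise in total across $M \in \{n-3, n-2, n-1, n, n+1\}$. For each I would compute $g_n$ and follow the orbit until it meets one of the six listed multisets. The two attractors turn out to be the fixed point $\{1,\ldots,1,2,2,3,n-3\}$ (listing six) and the 2-cycle $\{1,\ldots,1,2,2,2,n-2\} \leftrightarrow \{1,\ldots,1,2,4,n-3\}$ (containing listing five), and every near-extremal candidate enters one of these within two applications of $g_n$, funneling through a listed multiset.
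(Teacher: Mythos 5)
Your stage-2 reduction is sound: if some element of the cycle has maximum at least $n-3$, then (excess $e=n-M+1\le 4$) it is one of the $p(0)+p(1)+\dots+p(4)=12$ near-extremal members of $T_n^*$, and a short forward computation does funnel each of these into one of the six listed multisets; your identification of the fixed point $\{1,\dots,1,2,2,3,n-3\}$ and of the $2$-cycle $\{1,\dots,1,2,2,2,n-2\}\leftrightarrow\{1,\dots,1,2,4,n-3\}$ agrees with the paper's Theorem 5.5. The problem is stage 1, which carries essentially all the content of the lemma, and your argument for it is complete only at $n=8$. For $n\ge 9$ you concede that the cascading inequalities $\alpha_i+\alpha_{i+2}\ge n+1$ and $c_i\ge\alpha_i-2$ do not force $\alpha_j\ge n-3$, and you propose instead to ``enumerate the short list of partition patterns'' with $k_i\in[5,n-4]$ parts, sum $n$, at least four distinct values, and maximum at most $n-5$. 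That list is not short: for large $n$ the number of partitions of $n$ into $k$ parts with at least four distinct values grows without bound, and none of your stated constraints reduces it to a bounded case analysis. The single membership condition $n-k_{i-1}\in\mu(S_i)$ is one condition imposed on an unbounded family, and you give no mechanism by which it ``cannot be satisfied consistently around the cycle.'' As written, the contradiction is asserted, not derived.

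The paper closes exactly this gap with a second-order estimate that your invariants cannot see. It tracks the number of distinct values $|[A_i]|$ and proves $|[A_{i+2}]|\le|[\mu^2(A_i)]|+2$: whenever $|[A_{i+1}]|\ge|[A_i]|$, the multiset $\mu(A_i)$ has at most one repeated value, so $\mu^2(A_i)$ is $*\{1\}$ or $*\{1\}+\{2\}$ and hence $|[A_{i+2}]|\le 4$. Thus $|[A_i]|$ either strictly decreases or is already trapped at $\le 4$, and since we are on a cycle this gives $|[A_i]|\le 4$ for \emph{every} $i$. That immediately yields at least $n-4$ ones in every term, so for $n\ge 8$ the ones are the majority and $\max A_{i+1}=\text{mult}_{A_i}(1)+1\ge n-3$ everywhere --- your stage-1 claim and more. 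The quantities you track ($\max$, $|[\cdot]|$, $\text{mult}(1)$) are all first-order along the orbit; to repair your proof you need some version of this two-step contraction of the distinct-value count (or another argument of comparable strength), not a larger enumeration.
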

\begin{proof}
    Suppose our cycle
    $$A_0\overset{g_n}{\rightarrow}A_1\overset{g_n}{\rightarrow}...\overset{g_n}{\rightarrow}A_k=A_0.$$
    The total proof consists of ten subproofs each of which respectively proves
    \begin{enumerate}
        \item $|[A_{i+1}]|\le|[A_i]|+1$,
        \item $|[A_{i+2}]|\le|[\mu^2(A_i)]|+2$,
        \item if $|[A_{i+1}]|=|[A_i]|+1$ then $|[A_{i+2}]|\le 3$,
        \item if $|[A_{i+1}]|=|[A_i]|$ then $|[A_{i+2}]|\le 4$,
        \item $|[A_i]|\le 4$ for all $i$,
        \item $|[A_j]|\le |[A_{j+1}]|\le 4$ for some $0\ge j < k$,
        \item $|[A_i]|=|\{x\in A_{i+1}:x>1\}|$,
        \item if $n\ge 8$ then $\max A_{i+1}=\text{mult}_{A_i}(1)+1$,
        \item $|[A_i]|=\sum_{m\in A_{i+2}}(m-1)-\max A_{i+2}+1$, and
        \item $A_{j+2}$ is one of the multisets listed above.
    \end{enumerate}
    To summarize in English, we track the amount of distinct elements in each $A_i$ (written ``$|[A_i]|$"), find that it it drops to $4$ or less, and use the fact to whittle down a finite list of required cycle elements. We will take the first point first.
    
    1.) \textbf{The amount of distinct elements in $A_{i+1}$ is at most one more than that of $A_i$}. Using the definition of $A_{i+1}$ and the fact $1\not\in \mu_+(R)$ for any $R\in\mathbb{N}_+^*$ we have 
    $$|[A_{i+1}]|=|[\mu_+(A_i)+*\{1\}]|=|[\mu_+(A_i)]|+1\le |\mu_+(A_i)|+1=|[A_i]|+1.$$
    We use the ``$*$" as a multiset scalar to simply indicate the exact order is not needed but can be assumed greater than zero.
    
    2.) \textbf{The amount of distinct elements in $A_{i+2}$ is at most two more than that of $\mu^2(A_i)$}. We start similarly with 
    $$|[A_{i+2}]|=|[\mu_+(A_{i+1}])+*\{1\}|=|[\mu_+(A_{i+1})]|+1=|[\mu(A_{i+1})]|+1.$$
    This next part is a bit ugly. We must figure out what exactly $\mu(A_{i+1})=\mu(\mu_+(A_i)+l\{1\})$ looks like. But again, since $1\not \in \mu_+(R)$ for any $R\in \mathbb{N}_+^*$,
    $$\mu(\mu_+(A_i)+l\{1\})=\mu(\mu_+(A_i))+\{l\}=\mu^2(A_i)+\{l\}.$$
    Putting the two together, $|[A_{i+2}]|=|[\mu^2(A_i)+\{l\}]|+1\le |[\mu^2(A_i)]|+2.$
    
    3.) \textbf{If any $A_{i+1}$ has more distinct elements than $A_i$, then $A_{i+2}$ has at most $3$ distinct elements}. From subproof (1.), if $|[A_{i+1}]|=|[A_i]|+1$ then $|[\mu_+(A_i)]|=|\mu_+(A_i)|$. In other words, $\mu_+(A_i)$ -- and therefore also $\mu(A_i)$ -- contains all distinct values. It follows $\mu^2(A_i)=*\{1\}$ and therefore from subproof (2.): $|[A_{i+2}]|\le |[*\{1\}]|+2=3$.
    
    4.) \textbf{If any $A_{i+1}$ has equal amount of distinct elements as $A_i$, then $A_{i+2}$ has at most $4$ distinct elements}. Similarly, if $|[A_{i+1}]|=|[A_i]|$, then $|[\mu(A_i)]|=|\mu(A_i)|-1$. In other words, $\mu(A_i)$ contains exactly one duplicate entry and distinct elements otherwise implying
    $$\mu^2(A_i)=*\{1\}+\{2\}.$$
    It follows $|[A_{i+2}]|\le |[*\{1\}+\{2\}]|+2=4$.
    
    5.) \textbf{All $A_i$ have $4$ or less distinct elements}. From the two former subproofs, either $|[A_{i+1}]|<|[A_{i}]|$ or $A_{i+2}\le 4$. Thus eventually the distinct element counts drops to $4$. It then either remains at $4$ indefinitely (as with the self-inventoried number $21322314$), drops to $3$ or less (in which case our claim remains true), or increments to $5$. But if increments, then again, $|[A_{i+2}]|\le 3$ and the distinct element count can never again climb up above $4$. Thus at some point all $A_i$ have $4$ or less distinct elements. But we are in a cycle ($A_k=A_0$)! So \textit{at some point} really means \textit{always}.
    
    6.) Since what goes up must come down we may as well assume also: \textbf{$|[A_j]|\le |[A_{j+1}]|\le 4$ for some $0\le j<k$}.
    
    We now switch tracks. Instead of narrowing down the distinct element count further, we use what we've got to narrow down required cycle elements.
    
    7.) \textbf{The amount of non-$1$ elements in $A_{i+1}$ is the amount of distinct elements of $A_i$}. Note
    $$|\{x\in A_{i+1}:x>1\}|=|\mu_+(A_i)|=|[A_i]|.$$
    
    8.) \textbf{The largest element of $A_{i+1}$ is one greater than the count of $1$'s in $A_i$ (if $n\ge 8$).} From the previous two subproofs we know
    $$|\{x\in A_{i+1}:x>1\}|=|[A_i]|\le 4.$$
    In other words, at most $4$ of the $n$ elements in $A_{i+1}$ are not $1$'s. Equivalently, \textit{at least} $n-4$ of the $n$ elements \textit{are} $1$'s. Thus $1$'s are the majority if
    $$n-4\ge \frac{n}{2}\quad\Leftrightarrow\quad n\ge 8.$$
    And lastly, if $1$'s are majority then $\max A_{i+1}=\max \mu_+(A_i)=\text{mult}_{A_i}(1)+1.$
    
    9.) \textbf{The distinct element count of $A_i$ can be calculated from $A_{i+2}$}. Firstly, 
    $$\sum_{m\in A_{i+2}}(m-1)=\sum_{m\in \mu_+(A_{i+1})}(m-1)=\sum_{m\in \mu(A_{i+1})}m=|A_{i+1}|.$$
    Next we rework $|A_{i+1}|,$
    $$|A_{i+1}|=|\mu_+(A_i)|+|*\{1\}|=|[A_i]|+\text{mult}_{A_{i+1}}(1),$$
    which together with the former gives
    $$\sum_{m\in A_{i+2}}(m-1)=|[A_i]|+\text{mult}_{A_{i+1}}(1).$$
    The previous subproof is now needed. It tells us $\text{mult}_{A_{i+1}}(1)=\max A_{i+2}-1$. Thus by rearranging and substitution
    $$|[A_i]|=\sum_{m\in A_{i+2}}(m-1)-(\max A_{i+2}-1).$$
    
    10. \textbf{One of the following appears in any cycle under $g_n$ for $n\ge 8$}. The value of $j$ is that specified in subproof (6.). We use the formulas from subproofs (7.) and (9.) to calculate $|[A_j]|$ and $|[A_{j+1}]|$ from the elements of $A_{j+2}$.
    
    \begin{center}
        \begin{tabular}{c|c|c}
            $A_{j+2}$ & $|[A_{j+1}]|$ & $|[A_{j}]|$\\ \hline
            $\{1,...,1,n+1\}$ & $1$ & $0$ \\
            $\{1,...,1,2,n\}$ & $2$ & $1$ \\
            $\{1,...,1,3,n-1\}$ & $2$ & $2$ \\
            $\{1,...,1,2,2,n-1\}$ & $3$ & $2$ \\
            $\{1,...,1,2,3,n-2\}$ & $3$ & $3$ \\
            $\{1,...,1,2,2,2,n-2\}$ & $4$ & $3$ \\
            $\{1,...,1,2,2,3,n-3\}$ & $4$ & $4$ \\
        \end{tabular}
    \end{center}
    The first multiset, $\{1,...,n+1\}$, however cannot appear as it enforces $|[A_j]|=0$. In other words, it has no Grandparents. It's Parent is $n\{2\}$ which itself has a Parent only when $1+2+...+n=\frac{n(n+1)}{2}\le 2n$ -- or equivalently, when $n\le 3$ (look back to Figure 5.1 and observe only $g_1,g_2,g_3$ include $n\{2\}$ in their cycles). 
\end{proof}

Accordingly, here is a graph displaying the action of $g_n$ on the $6$ multisets (and one more) from the lemma:
\begin{center}
    \includegraphics[scale=0.3]{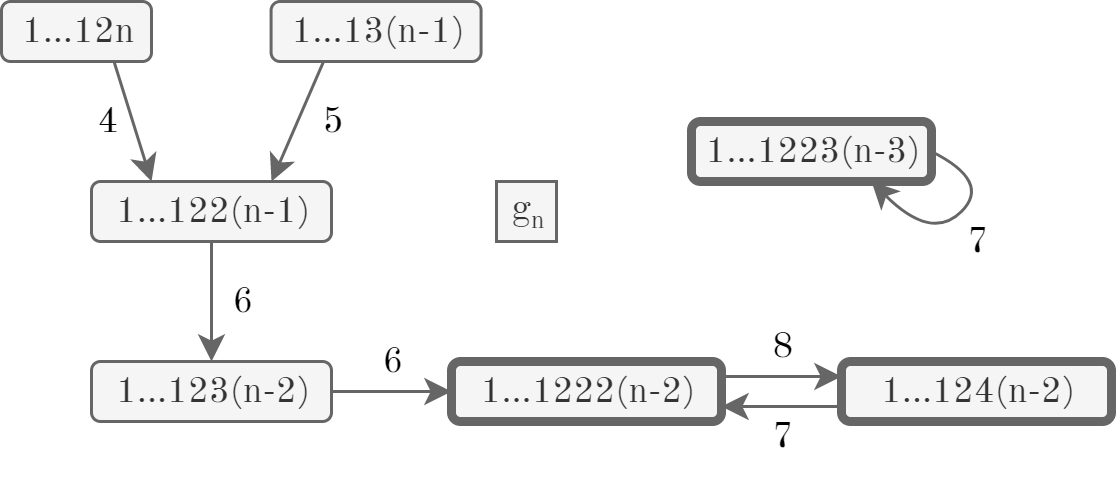}
    
    \textbf{Figure 5.2}
    
    Once again, we use integers to represent multisets (e.g. ``$1...123(n-2)$" stands for ``$\{1,...,1,2,3,n-2\}$"). The integer by each arrow is the smallest value of $n$ at and past which the map holds true.
\end{center}

\begin{theorem}
All Inventory Loops of length $n\ge 8$ fall into one of the two following parametric families:
\begin{enumerate}
    \item $...\ \rightarrow\ (n-3)132232(n-3)1a_1...1a_{n-4}\ \rightarrow\ ...$
    \item $...\ \rightarrow\ (n-3)142141(n-3)2(n-2)1a_1...1a_{n-5}\ \rightarrow\ (n-2)122242(n-3)1(n-2)1a_1...1a_{n-5}\ \rightarrow\ ...$
\end{enumerate}
where the $a_i$'s can be any whole numbers not already present in the Loop.
\end{theorem}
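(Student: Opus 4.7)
The plan is to combine the preceding lemma on $g_n$-cycle structure with a direct computation of the $g_n$-dynamics on the six listed multisets, then to lift each resulting adjective cycle back to an Inventory Loop via Lemma 5.1. By the Corollary to Lemma 5.2 every Inventory Loop corresponds to a $g_n$-cycle in $T_n^*$, so once the $g_n$-cycles are enumerated for each $n \geq 8$ the theorem will follow by restoring the free nouns.

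First I would apply $g_n(S)=\mu_+(S)+(n-|\mu_+(S)|)\{1\}$ in turn to each of the six candidates and read off their orbits from (or rederive) Figure 5.2. I expect to find that the first four candidates are strictly transient --- each falls in at most three steps into the pair $\{1,\ldots,1,2,2,2,n-2\}$ and $\{1,\ldots,1,2,4,n-3\}$, which form a $2$-cycle --- while $\{1,\ldots,1,2,2,3,n-3\}$ is a separate fixed point. Together with the preceding lemma, this pins down exactly two $g_n$-cycles in $T_n^*$ for every $n \geq 8$: one of length $1$ and one of length $2$.

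Second I would lift each adjective cycle $A_0\to\cdots\to A_{k-1}\to A_0$ back to an Inventory Loop using the identity from Lemma 5.1: the inventory multisets take the form $S_i = R + A_{i-1}$, where $R$ is a set of $n$ distinct positive integers that must contain every integer appearing anywhere in the cycle so that $[S_i] = R$ is preserved. For the fixed point, the forced nouns are $\{1,2,3,n-3\}$, leaving $n-4$ unrestricted free nouns $a_1,\ldots,a_{n-4}$, and a direct multiplicity count reproduces the integer expression of Family 1. For the $2$-cycle, the forced nouns are $\{1,2,4,n-3,n-2\}$, leaving $n-5$ free nouns, and identical bookkeeping yields the two multisets in Family 2.

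The main obstacle is the transient analysis in the first step: each of the four non-cyclic candidates must be verified to flow into the $2$-cycle (or the fixed point) and never into a new loop of its own, with no $n$-dependent collision --- such as $4 = n-3$ at small $n$ --- creating a spurious additional cycle. The hypothesis $n \geq 8$ inherited from the preceding lemma is precisely what keeps $1,2,3,4,n-3,n-2$ pairwise distinct, so the tabulated multiplicities are genuine and the parametric forms in Families 1 and 2 are well-defined for every $n \geq 8$.
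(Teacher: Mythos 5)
Your proposal is correct and matches the paper's argument: the paper likewise invokes Lemma 5.4 for the six forced multisets, reads their $g_n$-orbits off Figure 5.2 (which is exactly your transient/fixed-point/2-cycle computation, including the extra multiset $\{1,\ldots,1,2,4,n-3\}$), and lifts back through Lemma 5.1 by adjoining the forced and free nouns. Your explicit verification that the four transients reach the 2-cycle within three steps and that $n\ge 8$ keeps $1,2,3,4,n-3,n-2$ distinct is exactly the content the paper delegates to the figure.
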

\begin{proof}
    Lemma 5.4 gives six multisets which must appear in any Inventory Loop and Figure 5.2 show which Loops those six multisets must fall into. The Inventory Loops above (written in Integer Notation) follow the correspondence of Lemma 5.1.
\end{proof}

\begin{corollary}
    No Inventory Loop is longer than three numbers.
\end{corollary}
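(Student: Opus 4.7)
My plan is to derive the corollary as an immediate consequence of the two enumerations assembled in this section, namely Theorem 5.2 (all Inventory Loops with $\le 14$ elements) and Theorem 5.5 (all Inventory Loops with $\ge 16$ elements, i.e.\ corresponding to $n\ge 8$). Since together these cover every possible Loop, it is enough to verify that each one of them has period at most three.

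First I would handle the finite enumeration by direct inspection of the table in Theorem 5.2. Reading down the second column, five of the listed Loops (at $2$, $8$, $8$, $10$, and $14$ elements) are displayed as single multisets, hence fixed points of $f$; the $12$-element entry shows two multisets alternating, hence a $2$-cycle; and the second $14$-element entry displays three multisets cycling, hence a $3$-cycle. So the maximum period among Loops of at most $14$ elements is exactly three, attained by the $14$-element example.

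Second I would handle the parametric families in Theorem 5.5. Family (1) is presented as a single multiset with an arrow returning to itself, giving period $1$. Family (2) is presented as two distinct multisets linked by arrows in a $2$-cycle, giving period $2$. Since Theorem 5.5 enumerates every Inventory Loop with $n\ge 8$, this bounds the period by $2$ in the remaining infinite range. Combining the two cases, no Inventory Loop has period greater than $3$, with the $14$-element Loop being the sharp witness.

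There is no real obstacle here beyond bookkeeping. The only sanity check worth noting is that the two multisets displayed for family (2) of Theorem 5.5 are genuinely different (so the period is $2$ rather than a collapsed $1$), which is clear from comparing, e.g., their leading entries. Once the enumeration results are in hand, the corollary drops out in a few lines.
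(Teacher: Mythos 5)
Your proposal is correct and matches the paper's own argument, which likewise combines the finite enumeration (Theorem 5.3, Loops of at most $14$ elements) with the parametric families of Theorem 5.5 and reads off that the longest period is $3$, occurring only at $n=7$. The only difference is bookkeeping detail; the paper states the same conclusion in two sentences.
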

\begin{proof}
    Theorems 5.3 and 5.5 together give all Inventory Loops. The longest is length $3$ appearing only at $n=7$.
\end{proof}

\begin{corollary}
    This gives us a better (but not best) pre-period bound: $O(M^2)$ where $M=\max S_1$.
\end{corollary}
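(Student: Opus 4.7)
The plan is to leverage Theorem 5.5 (full enumeration of Inventory Loops) together with the internal machinery of Lemma 5.4 to upgrade the naive state-counting bound from Corollary 4.2. Write $h=\max S_8\le M+7$, which by Theorem 4.1 is an upper bound on the Height of every $S_i$ with $i\ge 8$; consequently $|S_i|\le 2h$ and $[S_i]\subseteq\{1,\dots,h\}$ for all such $i$. The goal is to show that from $S_8$ onward only $O(h^2)=O(M^2)$ steps are needed to enter a cycle.

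First I would track two monotone data as $i$ grows past $8$. From $[S_i]\subseteq[S_{i+1}]$ and $[S_i]\subseteq\{1,\dots,h\}$, the Noun set $[S_i]$ can strictly grow at most $h\le M+7$ times. In each maximal ``plateau'' on which $[S_i]$ is constant, say $[S_i]=R$ with $|R|=n$, the recurrence from the proof of Lemma 5.1 gives $A_{i+1}=g_n(A_i)$ for $A_i:=\mu(S_i)$, so on each plateau we are simply iterating a single $g_n$ inside $T_n^*$. Reaching a Loop past $S_8$ therefore decomposes into at most $O(M)$ plateaus, each of which either ends when $[S_i]$ strictly grows or when the $g_n$-orbit closes into one of the cycles enumerated in Theorems 5.2 and 5.5.

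Next, I would bound the length of a single plateau by $O(n)=O(M)$. For $n<8$ this is immediate since Figure 5.1 exhibits all of $T_n^*$ in finite detail. For $n\ge 8$ the key observation extractable from subproofs (1), (3), (4) of Lemma 5.4 is the following ``potential'' statement: any step on which $|[A_i]|$ does not strictly decrease forces $|[A_{i+2}]|\le 4$. Since $|[A_i]|\le n$ and cannot strictly decrease more than $n-1$ consecutive times, within at most $n+2$ iterations we enter the regime $|[A_i]|\le 4$. From there the structural classification in the table of subproof (10) and the graph in Figure 5.2 show that we reach one of the six distinguished multisets, and then a Loop, in $O(1)$ additional steps. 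Summing, each plateau contributes $O(M)$ steps before termination, and there are $O(M)$ plateaus, for a grand total of $O(M^2)$ past $S_8$.

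The main obstacle is precisely the transient bound on a single plateau. In particular, when a plateau ends by $[S_i]$ growing rather than by entering a cycle, one has to verify that the $g_n$-``potential'' argument really cost at most $O(n)$ steps on that plateau rather than starting from some worst case that depends on the full history. A clean way to handle this is to note that the only input to the bound in Lemma 5.4's subproofs is $|[A_i]|$, and that at the start of a new plateau the incoming $A_i$ still satisfies $|[A_i]|\le n\le h$, so the $O(n)$ budget restarts cleanly. Once this bookkeeping is pinned down, assembling Steps 1--3 yields the claimed $O(M^2)$ and completes the proof.
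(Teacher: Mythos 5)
Your proposal is essentially the paper's own argument: the paper's proof of this corollary is an admitted non-rigorous sketch that uses exactly your two ingredients --- the $|[A_i]|$ "decrement or drop below $4$" potential from Lemma 5.4 giving $O(M)$ steps between new-element appearances, and the bound of $O(M)$ on how many new elements can appear since $\max S_i\in O(M)$. You in fact go further than the paper does, handling the $n<8$ plateaus via the finiteness of $T_n^*$ and the restart of the potential budget at each plateau, both of which the paper explicitly leaves undone on the grounds that Section 6 supersedes this bound.
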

\begin{proof}
    From Lemma 5.4 the value $|[A_i]|$ either decrements or falls below $4$ as $i$ increases \textit{on the assumption} all elements have appeared. Thus after $O(|[A_i]|)$ iterations, either a loop has been entered or a new element appears. Since, from the previous section, $\max S_i\in O(M)$ we may conclude at most $O(M)$ elements appear. Since also $|[A_i]|\le |A_i|=\frac{1}{2}|S_i|\in O(M)$ we may presume the pre-period quadratic in $M$. 
    
    This is not rigorous and the case $n<8$ has not been included. We leave a full proof undone as the bound is improved (with rigor) in the following section.
\end{proof}

\section{Tracking Distinct Adjectives}
This section and the next generalize the methods of the previous to Inventory Sequences in general (as opposed to just \textit{Loops}). The main result is the amount of distinct Adjectives of an Inventory Sequence (i.e. $|[S_i]|$) drops to $7$ or less in $\log\log$ time. Deducing this will require some husky machinery. To make matters worse, this is one of the few sections without pictures.

\begin{lemma}
    For any $S\in\mathbb{N}_+^*$ if $\delta$ is the difference between $S$'s order and the count of distinct elements (in other words, if $\delta=|S|-|[S]|$) then
    $$|[\mu(S)]|\le \frac{1+\sqrt{1+8\delta}}{2}.$$
\end{lemma}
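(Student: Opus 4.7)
The plan is to bound $|[\mu(S)]|$ by relating the count of distinct multiplicities to $\delta$, the total "multiplicity excess" of $S$. The key observation is that distinct multiplicities are distinct \emph{positive integers}, which forces them to be at least the triangular numbers.

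First I would set $k = |[\mu(S)]|$ and list the distinct multiplicities as $m_1 < m_2 < \cdots < m_k$. Since each $m_i$ is a positive integer and the $m_i$ are strictly increasing, we have $m_i \ge i$. Next I would rewrite $\delta$ as
$$\delta = |S| - |[S]| = \sum_{x \in [S]} \bigl(\text{mult}_S(x) - 1\bigr).$$
For each $i$, at least one noun $x_i \in [S]$ has $\text{mult}_S(x_i) = m_i$, contributing $m_i - 1$ to the sum. Keeping only those contributions (the remaining terms of the sum are nonnegative) yields
$$\delta \;\ge\; \sum_{i=1}^{k} (m_i - 1) \;\ge\; \sum_{i=1}^{k} (i - 1) \;=\; \frac{k(k-1)}{2}.$$

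Finally I would solve the resulting quadratic inequality $k^2 - k - 2\delta \le 0$ by the quadratic formula to conclude
$$k \;\le\; \frac{1 + \sqrt{1 + 8\delta}}{2},$$
which is exactly the claim. There is no real obstacle here: the only subtle point is remembering that multiplicities must be \emph{positive} integers (so the smallest possible multiset of $k$ distinct multiplicities is $\{1, 2, \ldots, k\}$), and the identity $\delta = \sum_{x \in [S]}(\text{mult}_S(x) - 1)$, which just rewrites "order minus number of distinct elements" as a sum of excesses.
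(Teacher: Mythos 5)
Your proof is correct and follows essentially the same route as the paper's: both rewrite $\delta$ as $\sum_{m\in\mu(S)}(m-1)$, lower-bound that sum by noting the distinct multiplicities are distinct positive integers and hence at least $1,2,\ldots,k$ (giving the triangular number $\tfrac{k(k-1)}{2}$), and finish with the quadratic formula. No gaps.
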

\begin{proof}
    Let $c=|[\mu(S)]|$. Then 
    $$|S|=\sum_{m\in\mu(S)}m\ge 1+...+1+2+3+...+c.$$
    And since $|\mu(S)|=|[S]|$, we may proceed
    $$\delta=|S|-|[S]|=\sum_{m\in\mu(S)}(m-1)\ge 0+...+0+1+2+...+(c-1)=\frac{c(c-1)}{2}.$$
    Rearranging yields $c^2-c-2\delta\le 0$ from which the lemma follows by quadratic formula.
\end{proof}

The previous section was easy on us since we could assume no new elements were appearing (i.e. that $[\mu(S_i)]\in S_i$). Here the assumption doesn't hold. Whereas before we had a simple recurrence of Adjectives, $\mu(S_{i+1})=\mu_+(\mu(S_i))+*\{1\}$, here the corresponding recurrence -- the Ugly Recurrence -- becomes much messier:
$$\mu(S_{i+1})=\mu_+(\mu(S_i)^{S_i})+\mu(\mu(S_i)^{\neg S_i})+*\{1\}$$
Consequentially, some work is needed to patch the holes.

\begin{lemma}
    For any multisets $A,B$
    $$\text{(a)}\quad |[\mu(A+[B])]|\le 2|[\mu(A)]|+1,$$
    $$\text{(b)}\quad |\mu_+(A^B)+\mu(A^{\neg B})|=|[A]|,$$
    and if $|[B]|=1$ -- or in other words if $B$ is of the form $m\{x\}$ -- then
    $$\text{(c)}\quad |[\mu(A+B)]|\le |[\mu(A)]|+1.$$
\end{lemma}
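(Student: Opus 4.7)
The overall plan is to settle each part by a direct bookkeeping on multiplicities, partitioning elements of the relevant multiset according to which of $[A]$, $[B]$ they lie in. None of the three statements seem to require any inequality heavier than counting; the work is all in being careful about what ``$+$'', ``$\mu$'' and ``$\mu_+$'' do to multiplicities. I would dispatch part (b) first because it is essentially definitional, then (c) which introduces the idea of a one-element perturbation, then (a) whose bookkeeping is the messiest.

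For part (b), I would observe that by the definitions of $A^B$ and $A^{\neg B}$, every element of $A$ lies in exactly one of them, so $[A^B]$ and $[A^{\neg B}]$ are disjoint with $|[A^B]| + |[A^{\neg B}]| = |[A]|$. Since $|\mu_+(R)| = |\mu(R)| = |[R]|$ for any multiset $R$, and since ``$+$'' is concatenation (so orders add), we get $|\mu_+(A^B) + \mu(A^{\neg B})| = |[A^B]| + |[A^{\neg B}]| = |[A]|$ directly.

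For part (c), write $B = m\{x\}$. Adding $B$ to $A$ only affects the multiplicity of the single value $x$: if $x \in [A]$ then $\text{mult}_{A+B}(x) = \text{mult}_A(x) + m$ and all other multiplicities are unchanged, so $\mu(A+B)$ is obtained from $\mu(A)$ by removing one copy of $\text{mult}_A(x)$ and inserting $\text{mult}_A(x)+m$; if $x \notin [A]$ then $\mu(A+B) = \mu(A) + \{m\}$. In both situations $[\mu(A+B)] \subseteq [\mu(A)] \cup \{\text{one new value}\}$, so $|[\mu(A+B)]| \le |[\mu(A)]|+1$.

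For part (a), the key observation is that $[B]$ just slaps on one extra copy of each distinct element of $B$. Thus, for $x \in [A + [B]]$ I would split into three cases: if $x \in [A]\setminus [B]$ then $\text{mult}_{A+[B]}(x)=\text{mult}_A(x)$; if $x \in [A]\cap[B]$ then $\text{mult}_{A+[B]}(x)=\text{mult}_A(x)+1$; and if $x \in [B]\setminus[A]$ then $\text{mult}_{A+[B]}(x)=1$. Taking the set of multiplicities on both sides, this yields the inclusion
$$[\mu(A+[B])] \;\subseteq\; [\mu(A)] \;\cup\; \{\,m+1 : m \in [\mu(A)]\,\} \;\cup\; \{1\},$$
whose right-hand side has at most $2|[\mu(A)]|+1$ elements. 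The main (and only) obstacle is making sure the third bucket $\{1\}$ is counted once rather than being absorbed into $[\mu(A)]$; writing out the three cases explicitly makes the bound immediate.
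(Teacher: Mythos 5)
Your proof is correct and takes essentially the same approach as the paper: all three parts rest on partitioning the elements of $A$ by whether their value lies in $[B]$ (the paper phrases this via $A=A^B+A^{\neg B}$), and your three-bucket case analysis in part (a) is exactly the paper's decomposition $\mu(A+[B])=\mu_+(A^B)+\mu(A^{\neg B})+*\{1\}$ written out element by element. Parts (b) and (c) likewise match the paper's order count and two-case perturbation argument.
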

\begin{proof}
    By our generalizations of set intersection and difference we may say $A=A^B+A^{\neg B}$ with $[A^B]\subseteq B$. This tells us $\mu(A+[B])=\mu_+(A^B)+\mu(A^{\neg B})+*\{1\}$ where the ``$*$" simply marks an unspecified scalar. Paired with the fact $|[R+S]|\le |[R]|+|[S]|$ we may deduce
    $$|[\mu(A+[B])]|\le |[\mu_+(A^B)]|+|[\mu(A^{\neg B})]|+1.$$
    Lastly, since $A^B$ and $A^{\neg B}$ are essentially a disjoint partition of $A$ we know $\mu(A^B),\mu(A^{\neg B})\subseteq \mu(A)$. This implies $|[\mu_+(A^B)]|=|[\mu(A^B)]|,|[\mu(A^{\neg B})]|\le |[\mu(A)]|$ which in turn implies
    $$|[\mu_+(A^B)]|+|[\mu(A^{\neg B})]|+1\le 2|[\mu(A)]|+1.$$
    
    For part (b) simply observe
    $$|\mu_+(A^B)+\mu(A^{\neg B})|=|\mu(A^B)+\mu(A^{\neg B})|=|\mu_+(A^B+A^{\neg B})|=|\mu(A)|=|[A]|.$$
    
    Taking part (c), if we have $x\not \in A$ then
    $$\mu(A+m\{x\})=\mu(A)+\{m\}$$
    and if instead $x\in A$ then
    $$\mu(A+m\{x\})=\mu(A)^{\neg \{m'\}}+\{m+m'\}$$
    where $m'=\text{mult}_A(x).$ In either case at most one element is appended to $\mu(A)$ with another possibly removed.
\end{proof}

Our next lemma generalizes subproofs (1.) through (4.) of Lemma 5.4.

\begin{lemma}
    Letting $c_i=|[\mu(S_i)]|$, in any Inventory Sequence 
    $$\text{(a)}\quad c_{i+1}\le c_i+1$$
    and if $c_{i+1}\le c_i+1-\delta'$ then 
    $$\text{(b)}\quad c_{i+2}\le 3+\sqrt{9+8\delta'}.$$
\end{lemma}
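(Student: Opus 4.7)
The plan is to apply the Ugly Recurrence $\mu(S_{i+1}) = \mu_+(M^{S_i}) + \mu(M^{\neg S_i}) + k\{1\}$ displayed just above the lemma (writing $M = \mu(S_i)$ and $k = |[S_i] \setminus [M]|$). For part (a), I count distinct values summand by summand: the three pieces contribute at most $|[M^{S_i}]|$, $|[M^{\neg S_i}]|$, and one new value (the $\{1\}$) respectively, using $|[\mu_+(X)]| = |[\mu(X)]| \le |[X]|$. Since $[M^{S_i}]$ and $[M^{\neg S_i}]$ partition $[M]$ (Lemma 6.2(b)), summing yields $c_{i+1} \le |[M]| + 1 = c_i + 1$.

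For part (b), the strategy is to isolate the ``$\ge 2$ part'' of $\mu(S_{i+1})$, bound its excess in terms of $\delta'$, and feed the result into Lemma 6.1. Concretely, write $\mu(S_{i+1}) = L + r\{1\}$ where $L = \mu_+(M^{S_i}) + \mu(M^{\neg S_i})^{\neg\{1\}}$ has every entry at least $2$, and $r \ge 0$ collects the copies of $1$. The slack $\delta' = (c_i + 1) - c_{i+1}$ redistributes across four sources: duplicates in $\mu(M^{S_i})$, duplicates in $\mu(M^{\neg S_i})$, overlap between $[\mu_+(M^{S_i})]$ and $[\mu(M^{\neg S_i})]$, and whether the $\{1\}$ term supplies a genuinely new distinct value. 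A careful case-by-case count shows each source contributes at most its share to $|L| - |[L]|$, yielding the excess bound $|L| - |[L]| \le \delta'$. Lemma 6.1 then produces $|[\mu(L)]| \le \frac{1 + \sqrt{1 + 8\delta'}}{2}$.

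To finish, I apply the Ugly Recurrence one more step. Since $1 \notin [L]$, the distinct multiplicities of $S_{i+2} = [S_{i+1}] + L + r\{1\}$ split into four classes: (i) shifted values $\text{mult}_L(v) + 1$ for $v \in [L] \cap [S_{i+1}]$, (ii) unshifted values $\text{mult}_L(v)$ for $v \in [L] \setminus [S_{i+1}]$, (iii) possibly the value $1$ (arising whenever some $v \ge 2$ lies in $[S_{i+1}] \setminus [L]$), and (iv) at most one value, either $r$ or $r+1$, coming from the multiplicity of $1$ in $S_{i+2}$. Classes (i) and (ii) together contribute at most $2|[\mu(L)]|$ distinct values, while (iii) and (iv) add at most $2$. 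Thus $c_{i+2} \le 2|[\mu(L)]| + 2 \le 3 + \sqrt{1 + 8\delta'} \le 3 + \sqrt{9 + 8\delta'}$, the desired estimate (with the final $1 \le 9$ giving some slack).

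The main obstacle is the excess-of-$L$ calculation. Depending on which of the four sources $\delta'$ draws from, $|L|$ and $|[L]|$ shift in different ways. The most delicate subcase is when $1 \in \mu(M^{\neg S_i})$: excising the value-$1$ entries from $\mu(M^{\neg S_i})$ to form $L$ shrinks $|L|$ by the multiplicity of $1$ but shrinks $|[L]|$ by only $1$, and this interacts with whether the extra $\{1\}$ is a new distinct value. Verifying $|L| - |[L]| \le \delta'$ uniformly across all such subcases is the crux; the subsequent Lemma 6.1 application and the second unfolding of the recurrence are then routine.
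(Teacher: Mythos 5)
Your proposal is correct and takes essentially the paper's route: part (a) is the count $|[U_i]|\le|U_i|=c_i$ in different clothing (where $U_i=\mu_+(M^{S_i})+\mu(M^{\neg S_i})$), and part (b) is the paper's chain $c_{i+2}\le 2|[\mu(\cdot)]|+2$ followed by Lemma 6.1 applied to the excess of the adjective multiset, with your hand-unfolding of $S_{i+2}=[S_{i+1}]+L+r\{1\}$ playing the role of the paper's citations of Lemmas 6.2a and 6.2c. The one step you defer as the ``crux'' --- $|L|-|[L]|\le\delta'$ --- needs no four-way case analysis: $|L|\le|U_i|=c_i$ by Lemma 6.2b and $|[L]|\ge c_{i+1}-1$ since $[L]\subseteq[\mu(S_{i+1})]$ and at most the single value $1$ is missing from $[L]$, so $|L|-|[L]|\le c_i-(c_{i+1}-1)=\delta'$ at once, and your resulting $3+\sqrt{1+8\delta'}$ is in fact marginally sharper than the stated bound.
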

\begin{proof}
    To make reading easier we define $U_i=\mu_+(\mu(S_i)^{S_i})+\mu(\mu(S_i)^{\neg S_i})$ thus partially hiding the ugliness of the Ugly Recurrence.
    We have then $\mu(S_{i+1})=U_i+*\{1\}$. Since $|[R+T]|\le |[R]|+|[T]|$ for any multisets $R$ and $T$ we may deduce
    $$c_{i+1}=|[\mu(S_{i+1}]|\le |[U_i]|+1\le|U_i|+1.$$
    Applying Lemma 6.2b with $A=\mu(S_i)$ tells us further $|U_i|=|[\mu(S_i)]|=c_i$.
    
    And for (b) we start noting
    $$c_{i+2}=|[\mu(S_{i+2})]|=|[\mu([S_{i+1}]+\mu(S_{i+1}))]|=|[\mu([S_{i+1}]+U_i+*\{1\})]|.$$
    We can apply Lemma 6.2c (choosing $B=*\{1\}$) and then 6.2a (choosing $B=S_{i+1}$) obtaining
    $$c_{i+2}\le2|[\mu(U_i)]|+2.$$
    Now we saw earlier $c_{i+1}=|[U_i+*\{1\}]|$ implying $c_{i+1}\ge |[U_i]|$. Taking this together with $c_i=|U_i|$ and $\delta' \le c_i-c_{i+1}+1$ tell us
    $$\delta' \le |U_i|-|[U_i]|+1.$$
    Thus we apply Lemma 6.1 with $\delta =|U_i|-|[U_i]|$ obtaining 
    $$|[\mu(U_i)]|\le \frac{1+\sqrt{9+8\delta'}}{2}$$
    since $\delta'\le \delta +1$. Substituting into our bound on $c_{i+2}$ yields the lemma statement.
\end{proof}

Going forward, this bound on $c_i$'s is our fuel. We need one last lemma to convert it into a $\log\log$ bound of the pre-period.

\begin{lemma}
    Suppose $\{b_i\}_{i=0}^\infty$ obeys
    $$b_{i+2}\ge b_{i+1}+\frac{b_i^2-6b_i-8}{8}.$$
    If $b_1>b_0>8$ and in particular $b_1>\max (\frac{3}{4}b_0+4,8\Big(\frac{b_0}{8}\Big)^{\sqrt{2}})$ then for all $i\ge1$
    $$b_i>8\Big(\frac{b_0}{8}\Big)^{\sqrt{2}^i}.$$
\end{lemma}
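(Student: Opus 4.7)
Let $c_0 = b_0/8 > 1$ and write $P(i)$ for the assertion $b_i > 8 c_0^{\sqrt{2}^i}$. My plan is to prove $P(i)$ by induction, advancing two indices at a time, with separate base cases $P(1)$ and $P(2)$. The whole argument is driven by an algebraic cancellation that is engineered into both hypotheses: the threshold $\tfrac{3}{4}b_0+4$ exists precisely to annihilate the $-6b_0-8$ in the recurrence, and the exponent $\sqrt{2}$ exists precisely so that squaring every two steps gives $(c_0^{\sqrt{2}^i})^2 = c_0^{\sqrt{2}^{i+2}}$.

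The case $P(1)$ is immediate from $b_1 > 8(b_0/8)^{\sqrt{2}}$. For $P(2)$ I would combine the recurrence at $i=0$ with the other half of the hypothesis $b_1 > \tfrac{3}{4}b_0+4$:
\[
b_2 \ge b_1 + \frac{b_0^2-6b_0-8}{8} > \frac{3b_0+4}{4} + \frac{b_0^2-6b_0-8}{8} = \frac{b_0^2+24}{8} > \frac{b_0^2}{8} = 8c_0^{\sqrt{2}^2}.
\]

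Before the inductive step I would record a monotonicity lemma: $b_i > 8$ and $b_{i+1} > b_i$ for every $i \ge 0$. This follows by induction from the recurrence, since $b_{i-1} > 8$ forces $b_{i-1}^2 - 6b_{i-1} - 8 > 64-48-8 = 8 > 0$. In particular $b_{i+1} > b_i > 4$, which rearranges to the one bound I actually need: $b_{i+1} > (3b_i+4)/4$.

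The inductive step is then the same cancellation used for $P(2)$. Assuming $P(i)$ and using the monotonicity bound,
\[
b_{i+2} \ge b_{i+1} + \frac{b_i^2-6b_i-8}{8} > \frac{3b_i+4}{4} + \frac{b_i^2-6b_i-8}{8} = \frac{b_i^2}{8},
\]
and squaring $P(i)$ gives $b_i^2/8 > 8c_0^{2\sqrt{2}^i} = 8c_0^{\sqrt{2}^{i+2}}$, which is exactly $P(i+2)$. The two base cases $P(1), P(2)$ then propagate to all $i \ge 1$. There is no real obstacle; the ``hard part'' is conceptual — seeing that the stated lower bound on $b_1$ and the choice $\sqrt{2}$ are not arbitrary but are the unique values for which the recurrence telescopes into a clean doubly-exponential lower bound.
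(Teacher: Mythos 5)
Your proof is correct and follows essentially the same route as the paper's: establish that the sequence is increasing, derive the key inequality $b_{i+2}>b_i^2/8$, and close with a two-step induction whose squaring step turns $\sqrt{2}^i$ into $\sqrt{2}^{i+2}$. The only (harmless) differences are that you get $b_{i+2}>b_i^2/8$ for $i\ge 1$ from monotonicity ($b_{i+1}>b_i>\frac{3b_i+4}{4}$ since $b_i>4$) rather than by substituting the recurrence into itself, and your $P(2)$ display writes $\frac{3b_0+4}{4}$ where the hypothesis gives $\frac{3}{4}b_0+4$ (whence the ``$+24$''), a slip that does not affect the conclusion $b_2>b_0^2/8$.
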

\begin{proof}
    Firstly the sequence is increasing. This is seen inductively since if $b_{i+1}>b_i>8$ then $b_i^2-6b_i-8=b_i(b_i-6)-8>8$ and the recursive bound tells us $b_{i+2}> b_{i+1}+1$. Secondly we claim $b_{i+2}>\frac{1}{8}b_i^2$ when $b_{i-1}$ is defined -- or equivalently when $i\ge 1$. We may substitute the recursive bound into itself obtaining
    $$b_{i+2}\ge b_i+ \frac{b_{i-1}^2-6b_{i-1}-8}{8}+\frac{b_{i}^2-6b_{i}-8}{8}>\frac{b_{i}^2+2b_{i}-8}{8}>\frac{b_i^2}{8}.$$
    The case $i=0$ reduces to showing $8b_1-6b_0-8>0$ -- or equivalently $b_1>\frac{3}{4}b_0+4$ which is given by assumption.
    
    The lemma statement may now be shown through strong induction assuming more weakly $b_i\ge 8\Big(\frac{b_0}{8}\Big)^{\sqrt{2}^i}$ since the base case $i=0$ forces equality in the bound. The base case $i=1$ is given by assumption. The inductive case is
    $$b_{i+2}>\frac{b_i}{8}\ge \frac{1}{8}\Big(8\big(\frac{b_0}{8}\big)^{\sqrt{2}^i}\Big)^2=8\Big(\frac{b_0}{8}\Big)^{2\sqrt{2}^i}=8\Big(\frac{b_0}{8}\Big)^{\sqrt{2}^{i+2}}.$$
\end{proof}

\begin{theorem}
    Letting $c_i=|[\mu(S_i)]|$ there exists
    $$k\le 6+\log_{\sqrt{2}}\log_{5/4}\frac{\max(c_0,8)}{8}$$
    such that $c_k,c_{k+1}\le 7$.
\end{theorem}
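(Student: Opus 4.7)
The plan is to reverse time so Lemma 6.3's forward upper bounds become the backward lower-bound recurrence of Lemma 6.4, then convert Lemma 6.4's doubly exponential growth into the stated $\log\log$ bound. First, rearranging Lemma 6.3(b) by taking $\delta'=c_i+1-c_{i+1}$ (nonnegative by (a)) and squaring $c_{i+2}\le 3+\sqrt{9+8\delta'}$ produces
\[
    c_i \;\ge\; c_{i+1} + \frac{c_{i+2}^{2}-6c_{i+2}-8}{8}.
\]
Defining $b_j=c_{N-j}$ for a suitable endpoint $N$ turns this into Lemma 6.4's recurrence $b_{l+2}\ge b_{l+1}+(b_l^2-6b_l-8)/8$ exactly.

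Second, I would dispatch the endgame: once some $c_i\le 7$, the desired pair arrives within two further steps. The only obstruction to $k=i$ working is $c_{i+1}=8$, which via Lemma 6.3(a) forces $c_i=7$; then $\delta'=0$ and Lemma 6.3(b) gives $c_{i+2}\le 3+3=6$, so the pair $(c_{i+2},c_{i+3})$ has both values at most $7$ (again by (a)). The trivial case $c_0\le 8$ is absorbed into the additive constant, so I may assume $c_0\ge 9$.

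Third, I would argue by contradiction. Suppose $c_i\ge 8$ for all $0\le i\le N$ with $N$ violating the stated bound. A direct check shows $c$ cannot hover purely in $\{8,9\}$ for long (the recurrence without a strict drop would force the next iterate below $8$), so some $c_i\ge 10$ appears in this window. Choosing $N$ so that $b_0 = c_N = 10$ gives $b_0/8=5/4$, which is the source of the $\log_{5/4}$ in the theorem. One backward step of the recurrence yields $b_2\ge b_1+(100-60-8)/8 = b_1+4$, and a constant number of further bootstrap iterations suffice to push $b_1$ strictly above the thresholds $\tfrac34 b_0+4=11.5$ and $8(5/4)^{\sqrt{2}}\approx 11$ required by Lemma 6.4. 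Lemma 6.4 then supplies $c_0=b_N>8(5/4)^{\sqrt{2}^N}$, which rearranges to $N\le\log_{\sqrt{2}}\log_{5/4}(c_0/8)$, contradicting the assumption on $N$.

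The main obstacle I anticipate is exactly this bootstrap step: carefully picking $N$ and then verifying all three of Lemma 6.4's numerical hypotheses on the starting pair $(b_0,b_1)$ after the reversal. The additive constant $6$ in the theorem is there to absorb the cost of this bootstrap together with the two-step endgame, so the dominant $\log_{\sqrt{2}}\log_{5/4}$ growth comes purely from the doubly exponential inequality of Lemma 6.4.
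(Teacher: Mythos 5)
Your skeleton is the paper's: rearrange Lemma 6.3(b) into the backward recurrence $c_i\ge c_{i+1}+\frac{c_{i+2}^2-6c_{i+2}-8}{8}$, reverse time so Lemma 6.4 applies, and read off the $\log\log$ bound. Your endgame observation (that a single $c_i\le 7$ yields a consecutive pair within two more steps) is fine and is if anything more explicit than the paper's.

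The gap is in how you seed Lemma 6.4, which you yourself flag as the main obstacle but then resolve incorrectly. You propose to ``choose $N$ so that $b_0=c_N=10$,'' but nothing guarantees any index takes the value $10$ exactly (the sequence can only be pinned down from below, and $c_0$ itself may exceed $10$ with the sequence dropping straight past it), and more importantly Lemma 6.4 needs lower bounds on \emph{two consecutive} backward terms $b_0,b_1$ with $b_1$ exceeding both $\frac34 b_0+4$ and $8(b_0/8)^{\sqrt2}$; knowing only $b_0\ge 10$ and $b_1\ge 8$, a few forward applications of the recurrence give $b_2\ge 12$, $b_3\ge 13$, which still fails the threshold test at the shifted seed ($\frac34\cdot 12+4=13$ and $8(12/8)^{\sqrt2}\approx 14.2$), so ``a constant number of bootstrap iterations'' is not automatic and, done naively, changes the base of the inner logarithm. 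The paper sidesteps all of this by taking $k$ \emph{minimal} with $c_k,c_{k+1}\le 7$: minimality forces $c_{k-1}\ge 8$ and $c_{k-2}\ge 7$, and six explicit backward applications of the recurrence then give $c_{k-5}\ge 9$, $c_{k-6}\ge 10$, $c_{k-7}\ge 13$, so that $b_i:=c_{k-6-i}$ satisfies Lemma 6.4's hypotheses exactly ($13>11.5$ and $13>10.96$) and produces both the additive $6$ and the base $5/4$. You need to replace your ``hovering in $\{8,9\}$'' and exact-value-$10$ argument with this minimality-plus-explicit-backtracking step; without it the invocation of Lemma 6.4 is not justified.
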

\begin{proof}
    We should first establish some such $k$ exists. Lemma 6.3b tells us if $c_{i+1}>c_i$ (i.e. $\delta'=0$) then $c_{i+2}\le 6$ and if $c_{i+1}=c_i$ (i.e. $\delta'=1$) then $c_{i+2}\le 7$. It follows there eventually must be $c_k,c_{k+1}\le 7$ for some $k\ge 0$.
    
    More generally the same lemma gives us a bound on $c_{i+2}$ from above,
    $$c_{i+2}\le 3+\sqrt{9+8(c_i-c_{i+1}+1)},$$
    which after proper rearrangement gives us a bound on $c_i$ from below,
    $$c_i\ge c_{i+1}+\frac{c_{i+2}^2-6c_{i+2}-8}{8}.$$
    This is of course the bound from the previous lemma and by no coincidence. We may now climb back up from $c_k$ to $c_0$. But we need footing to get started.
    
    Let's presume our $k$ is the smallest such $k$. Therefore if they are defined $c_{k-1}\ge 8$ (since otherwise $c_{k-1},c_k\le 7$ and $k-1$ will do) and $c_{k-2}\ge 7$ (since otherwise $c_{k-1}\le c_{k-2}+1\le 7$). From here all prior $c_i$ can be bounded recursively. For example,
    $$c_{k-3}\ge 7+\frac{8^2-6(8)-8}{8}=8,\quad c_{k-4}\ge 8+\frac{7^2-6(7)-8}{8}=7.875,\quad ...$$
    Since each $c_i$ is an integer we may presume $c_{k-4}\ge 8$. Going on in this way we may backtrack from $c_k$ taking the smallest integer value at each turn:
    $$...,61660878524,68802238,701955,23344,2333,413,127,51,28,17,13,10,9,8,8,7,8,c_k,c_{k+1}.$$
    For instance, this sequence tells us if $k\ge 17$ then $c_0=|[\mu(S_0)]|\ge 61660878524$ -- or conversely, if the distinct Adjective count starts at less than $61660878524$ it will be down to $7$ or less before the 17th Generation. We now see just how quickly this metric drops. It's time to apply the previous lemma.
    
    Let's define $b_i=c_{k-6-i}$ so that $b_0\ge 10$ and $b_1\ge 13$. We have
    $$\frac{3}{4}b_0+4=11.5\quad\text{and}\quad 8\Big(\frac{b_0}{8}\Big)^{\sqrt{2}}=10.96...$$
    so Lemma 6.4 applies and 
    $$b_i>8\Big(\frac{5}{4}\Big)^{\sqrt{2}^i}.$$
    Picking $i=k-6$ in particular tells us
    $$c_0>8\Big(\frac{5}{4}\Big)^{\sqrt{2}^{k-6}}.$$
    Thus a first logarithm gives us $\log_{5/4}\frac{c_0}{8}>\sqrt{2}^{k-6}$ and a second gives
    $$k\le 6+\log_{\sqrt{2}}\log_{5/4}\frac{c_0}{8}.$$
    One last point must be made. In defining $b_i=c_{k-6-i}$ we have presumed $k\ge 6$ and further the $\log\log$ bound gives sensible results only if $c_0\ge 10$. Both hiccups are resolved exchanging ``$c_0$" for ``$\max(c_0,10)$" in the bound.
\end{proof}

\section{Tracking Core Adjectives}

Once the amount of distinct Adjectives has fallen below $7$, we can nearly determine the precise identity of the Adjectives themselves. But to see why we can only \textit{nearly} determine their identity we first should define two new terms. Let $m_i=\max \mu(S_i)$ and $R_i$ be the unique multiset such that 
\begin{enumerate}
    \item $1\not \in R_i$, and
    \item $\mu(S_i)=*\{1\}+R_i+\{m_i\}.$
\end{enumerate}
We call $R_i$ the \textit{Core Adjectives} of $S_i$ (where as $\mu(S_i)$ is just the \textit{Adjectives}). By the end of this section, we will have used Theorem 6.5 to narrow down the Core Adjectives to just eight possibilities past a certain point. As before, some machinery is needed before we can say so.

This next lemma generalizes subproofs (6.) through (9.) from Lemma 5.4.

\begin{lemma}
    If $|[\mu(S_k)]|,|[\mu(S_{k+1})]|\le l$ then $|R_{k+2}|\le l-1$ and 
    $$\sum_{r\in R_{k+2}} (r-1) \le l+1.$$ In other words, if the distinct Adjective counts of two consecutive Generations are $l$ or less, then there are at most $l-1$ Core Adjectives in the third Generation and their count subtracted from their sum is at most $l+1$.
\end{lemma}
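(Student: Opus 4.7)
The plan is to generalize subproofs (7), (8), (9) of Lemma 5.4 past the loop setting by working directly with the Ugly Recurrence. For the cardinality bound, the Ugly Recurrence writes $\mu(S_{k+2}) = U_{k+1} + *\{1\}$ where $U_{k+1} = \mu_+(\mu(S_{k+1})^{S_{k+1}}) + \mu(\mu(S_{k+1})^{\neg S_{k+1}})$, and Lemma 6.2(b) gives $|U_{k+1}| = c_{k+1} \le l$. The $\mu_+$ summand produces only values $\ge 2$ and the $*\{1\}$ tail produces only $1$'s, so every non-$1$ element of $\mu(S_{k+2})$ lies inside $U_{k+1}$. Hence the non-$1$ count of $\mu(S_{k+2})$ is at most $c_{k+1} \le l$, and deleting one copy of $m_{k+2}$ to form $R_{k+2}$ leaves $|R_{k+2}| \le l - 1$.

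For the sum bound, I would first compute it via the size identity $\sum_{m \in \mu(S_{k+2})}(m-1) = |S_{k+2}| - |\mu(S_{k+2})| = 2 d_{k+1} - d_{k+2}$, where $d_i := |[S_i]|$. Since $1$'s contribute nothing to this sum, subtracting $m_{k+2} - 1$ for the distinguished max gives
\[
\sum_{r \in R_{k+2}}(r-1) = 2 d_{k+1} - d_{k+2} - m_{k+2} + 1 \le d_{k+1} - m_{k+2} + 1,
\]
using $d_{k+2} \ge d_{k+1}$ from the inclusion $[S_{k+1}] \subseteq [S_{k+2}]$. It then suffices to show $m_{k+2} \ge d_{k+1} - l$. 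Re-running the first paragraph's argument one Generation earlier bounds the non-$1$ count of $\mu(S_{k+1})$ by $c_k \le l$. When $d_{k+1} > l$ this forces $\text{mult}_{\mu(S_{k+1})}(1) \ge d_{k+1} - l$; since each value's multiplicity in $S_{k+2} = [S_{k+1}] + \mu(S_{k+1})$ is at least its multiplicity in $\mu(S_{k+1})$, taking the value $1$ as witness gives $m_{k+2} \ge d_{k+1} - l$. When $d_{k+1} \le l$ the bound is trivial. Either way, the sum is at most $l+1$.

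The main obstacle is this last lower bound on $m_{k+2}$: the Ugly Recurrence's ``new'' Nouns in $[\mu(S_i)] \setminus [S_i]$ prevent a clean analogue of subproof (8)'s identity $\max A_{i+1} = \text{mult}_{A_i}(1) + 1$. The softer replacement ``$1$ is at least tied for the mode of $\mu(S_{k+1})$'' is what forces the extra $+1$ of slack appearing in the stated $l+1$ bound, versus the tighter $l$ one obtains in a pure Loop.
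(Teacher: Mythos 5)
Your proof is correct, and the first half is essentially the paper's argument verbatim: expand the Ugly Recurrence, observe that every non-$1$ Adjective of $\mu(S_{k+2})$ comes from the block $\mu_+(\mu(S_{k+1})^{S_{k+1}})+\mu(\mu(S_{k+1})^{\neg S_{k+1}})$, and apply Lemma 6.2b to bound that block's order by $|[\mu(S_{k+1})]|\le l$. For the sum bound your skeleton also matches the paper's --- bound the total Adjective excess $\sum_{x\in\mu(S_{k+2})}(x-1)$ by $d_{k+1}=|\mu(S_{k+1})|$, then combine $\text{mult}_{\mu(S_{k+1})}(1)\ge d_{k+1}-l$ with $m_{k+2}\ge\text{mult}_{S_{k+2}}(1)\ge\text{mult}_{\mu(S_{k+1})}(1)$ --- but you derive the first of those facts by a genuinely different and cleaner route. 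The paper obtains $\sum_{x\in\mu(S_{k+2})}(x-1)\le|\mu(S_{k+1})|$ by replacing one $\mu$ with a $\mu_+$ inside the Ugly Recurrence and arguing the change only increments elements (a step the authors themselves flag as hard to state clearly), whereas you use the exact counting identity $\sum_{m\in\mu(S)}(m-1)=|S|-|[S]|$ together with $|S_{k+2}|=2|[S_{k+1}]|$ to get the equality $\sum_{x\in\mu(S_{k+2})}(x-1)=2d_{k+1}-d_{k+2}$, which is at most $d_{k+1}$ since $[S_{k+1}]\subseteq[S_{k+2}]$. That buys an identity in place of an inequality and avoids the multiset surgery entirely; the paper's version has the modest advantage of staying inside the $\mu/\mu_+$ calculus it reuses elsewhere. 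Your explicit split on whether $d_{k+1}>l$ (using $m_{k+2}\ge1$ otherwise) correctly closes an edge case that your reformulation makes visible.
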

\begin{proof}
    We must bring back the Ugly Recurrence from the previous section:
    $$\mu(S_{k+2})=\mu_+(\mu(S_{k+1})^{S_{k+1}})+\mu(\mu(S_{k+1})^{\neg S_{k+1}})+*\{1\}.$$
    From our definition of $R_k$ it then follows
    $$|R_{k+2}+\{m_{k+2}\}|=|\{x\in \mu(S_{k+2}:x>1\}|\le |\mu_+(\mu(S_{k+1})^{S_{k+1}})+\mu(\mu(S_{k+1})^{\neg S_{k+1}})|.$$
    That left hand side looks bad, but Lemma 6.2b gives us $|R_{k+2}+\{m_{k+2}\}|\le |[\mu(S_{k+1})]|\le l.$ Deducing $|R_{k+2}|\le l-1$ from here amounts to noticing $\{m_{k+2}\}$ is a singleton.
    
    The second part is trickier. It begins with the equation
    $$(m_{k+2}-1)+\sum_{r\in R_{k+2}}(r-1)=\sum_{x\in \mu(S_{k+2})}(x-1)$$
    And here the author is not sure how to proceed with clarity. It seems the Ugly Recurrence must be used once more. Note that if we replaced one ``$\mu$" in particular with a ``$\mu_+$" the Ugly Recurrence becomes much less intolerable:
    $$\mu_+(\mu(S_{k+1})^{S_{k+1}})+\mathbf{\mu_+}(\mu(S_{k+1})^{\neg S_{k+1}})=\mu_+(\mu(S_{k+1})^{S_{k+1}}+\mu(S_{k+1})^{\neg S_{k+1}})=\mu_+(\mu(S_{k+1})).$$
    In total, the change increments a few elements of $\mu(S_{k+2})$. Thus we may say
    $$\sum_{x\in \mu(S_{k+2})}(x-1)\le \sum_{x\in \mu_+(\mu(S_{k+1}))}(x-1)=\sum_{x\in \mu(\mu(S_{k+1}))}x=|\mu(S_{k+1})|.$$
    Another application of the Ugly Recurrence and Lemma 6.2b give us
    $$|\mu(S_{k+1})|=|\mu_+(\mu(S_{k})^{S_{k}})+\mu(\mu(S_{k})^{\neg S_{k}})+*\{1\}|=|[\mu(S_k)]|+\text{mult}_{\mu(S_{k+1})}(1)\le l+\text{mult}_{\mu(S_{k+1})}(1).$$
    Linking these four messes together neatly gives us
    $$(m_{k+2}-1)+\sum_{r\in R_{k+2}}(r-1)\le l+\text{mult}_{\mu(S_{k+1})}(1).$$
    Finishing off the lemma now amounts to showing $\text{mult}_{\mu(S_{k+1})}(1)\le m_{k+2}$. Note $S_{k+2}=[S_{k+1}]+\mu(S_{k+1})$ tells us $\text{mult}_{\mu(S_{k+1})}(1)\le \text{mult}_{S_{k+2}}(1)$. And since we defined $m_{k+2}=\max \mu(S_{k+2})$ we may conclude 
    $$\text{mult}_{\mu(S_{k+1})}(1)\le \text{mult}_{S_{k+2}}(1)\le m_{k+2}.$$
\end{proof}

\begin{corollary}
    If $|[\mu(S_k)]|,|[\mu(S_{k+1})]|\le 7$ then $R_{k+2}$ -- expressed in Integer Notation -- is one of
    $$2, 3, 4, 5, 6, 7, 8, 9,$$
    $$22,23,24,33,25,34,26,35,44,27,36,45,28,37,46,55,$$
    $$222,223,224,233,225,234,333,226,235,244,334,227,236,245,335,344,$$
    $$2222,2223,2224,2233,2225,2234,2333,2226,2235,2244,2334,3333,$$
    $$22222,22223,22224,22233,22225,22234,22333,$$
    $$222222,222223,222224,222233,$$
    or is the empty set $\emptyset$.
\end{corollary}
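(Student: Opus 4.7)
My plan is to obtain the corollary as an immediate enumeration once Lemma 7.1 is applied with $l = 7$. That lemma then delivers the two numerical constraints
\[
|R_{k+2}| \le 6, \qquad \sum_{r \in R_{k+2}}(r - 1) \le 8.
\]
Combined with the structural fact $1 \notin R_{k+2}$, which forces every element to satisfy $r \ge 2$, these cut the possibilities for $R_{k+2}$ down to a finite combinatorial family which I would then exhibit.

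To make the enumeration mechanical, I would change variables by writing $t = r - 1$ for each element $r \in R_{k+2}$. Since $r \ge 2$, each $t \ge 1$, and the two constraints become: $R_{k+2}$ is in bijection with a multiset of at most six positive integers whose total is at most eight. Equivalently, each valid $R_{k+2}$ corresponds to a partition of some integer $N \in \{0, 1, \ldots, 8\}$ into at most six positive parts, with $N = 0$ giving the empty multiset.

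I would then walk through the partition table by cardinality $p = |R_{k+2}|$ from $0$ to $6$, and within each cardinality by $N$ from $p$ up to $8$. For each pair $(p, N)$ I would list every partition of $N$ into exactly $p$ positive parts, then add $1$ back to every part to recover $R_{k+2}$ in Integer Notation as written in the statement. The case $p = 0$ yields the empty set $\emptyset$; the cases $p = 1, 2, 3, 4, 5, 6$ produce $8$, $16$, $16$, $12$, $7$, and $4$ multisets respectively, which are exactly the six rows of the stated list in that order.

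The only real obstacle is bookkeeping completeness -- making sure no small partition is skipped or duplicated -- and this is entirely handled by the size-then-sum ordering above, checked against the standard partition counts for $N \le 8$ with at most $6$ parts. No ideas beyond Lemma 7.1 and the defining property of $R_{k+2}$ enter the argument.
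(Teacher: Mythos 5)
Your proposal is correct and takes essentially the same approach as the paper, whose proof likewise just exhausts the possibilities given by Lemma 7.1 (the paper's proof says to use $l=6$, evidently a typo for $l=7$, since the stated list contains six-element multisets and the singleton $9$, both of which only the $l=7$ bounds permit). Your partition-based bookkeeping with $t=r-1$ reproduces the $1+8+16+16+12+7+4=64$ possibilities exactly as listed.
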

\begin{proof}
    Simply exhaust the possibilities specified by the previous lemma with $l=6$.
\end{proof}

To proceed we must study how these $64$ possibilities of the Core Adjectives map to each other under $f$. Unfortunately, $f$ does not determine their mapping \textit{uniquely}. In other words, knowing $R_i$ is not enough information to determine $R_{i+1}$. The relationship is determined in Loops but in Inventory Sequences in general knowing $R_i$ only narrows the identity of $R_{i+1}$ down to some candidate values. This indeterminacy is caused again by the same non-inclusion which made the Ugly Recurrence ugly. 

So what assumptions, then, did we make when working with Loops? Two actually:
\begin{enumerate}
    \item All new elements have appeared.
    \item $n=|\mu(S_i)|$ is sufficiently large.
\end{enumerate}
And indeed when we assume these, $f$ forces a unique map on Core Adjectives since
\begin{enumerate}
    \item if all new elements have appeared then $[R_i+\{m_i\}]\subset S_i$ and
    \item if $n$ is large enough then most Adjectives must be $1$'s (presuming the Core Adjectives are known and fixed) and thus $m_{i+1}=\text{mult}_{S_{i+1}}(1)$. 
\end{enumerate}
With such assumptions fullfilled the equation
$$\mu(S_{i+1})=\mu([S_i]+*\{1\}+R_i+\{m_i\})=*\{1\}+\{m_{i+1}\}+\mu_+(R_i)+\{2\}$$
tells us $R_{i+1}=\{2\}+\mu_+(R_i)$. Simple enough. So in accordance with our naming convention from Section 5 (where $g_n$ was the map on Adjectives) and the former two naive assumptions we define a map on Core Adjectives:
$$g_\text{naive}(R)=\{2\}+\mu_+(R).$$
Here then is $g_\text{naive}$'s action on the $64$ possibilities from the previous corollary:
\begin{center}
    \includegraphics[scale=0.25]{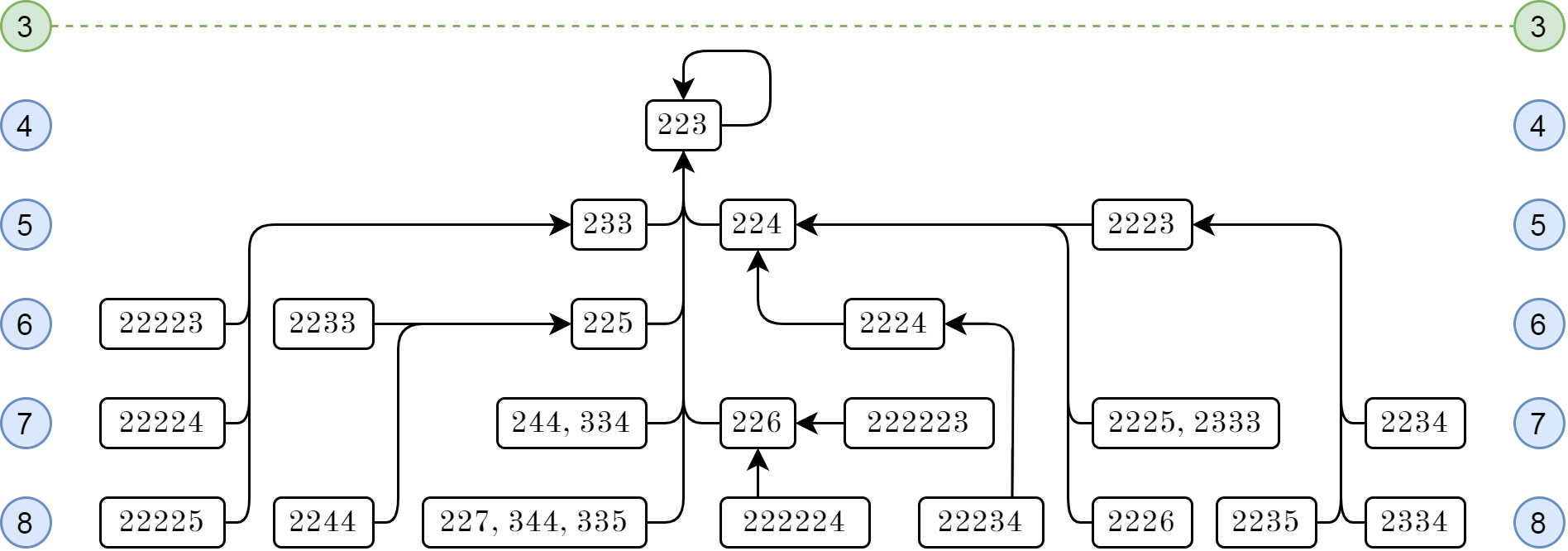}
    
    \textbf{Figure 7.1}
    
    \ 
    
    \includegraphics[scale=0.27]{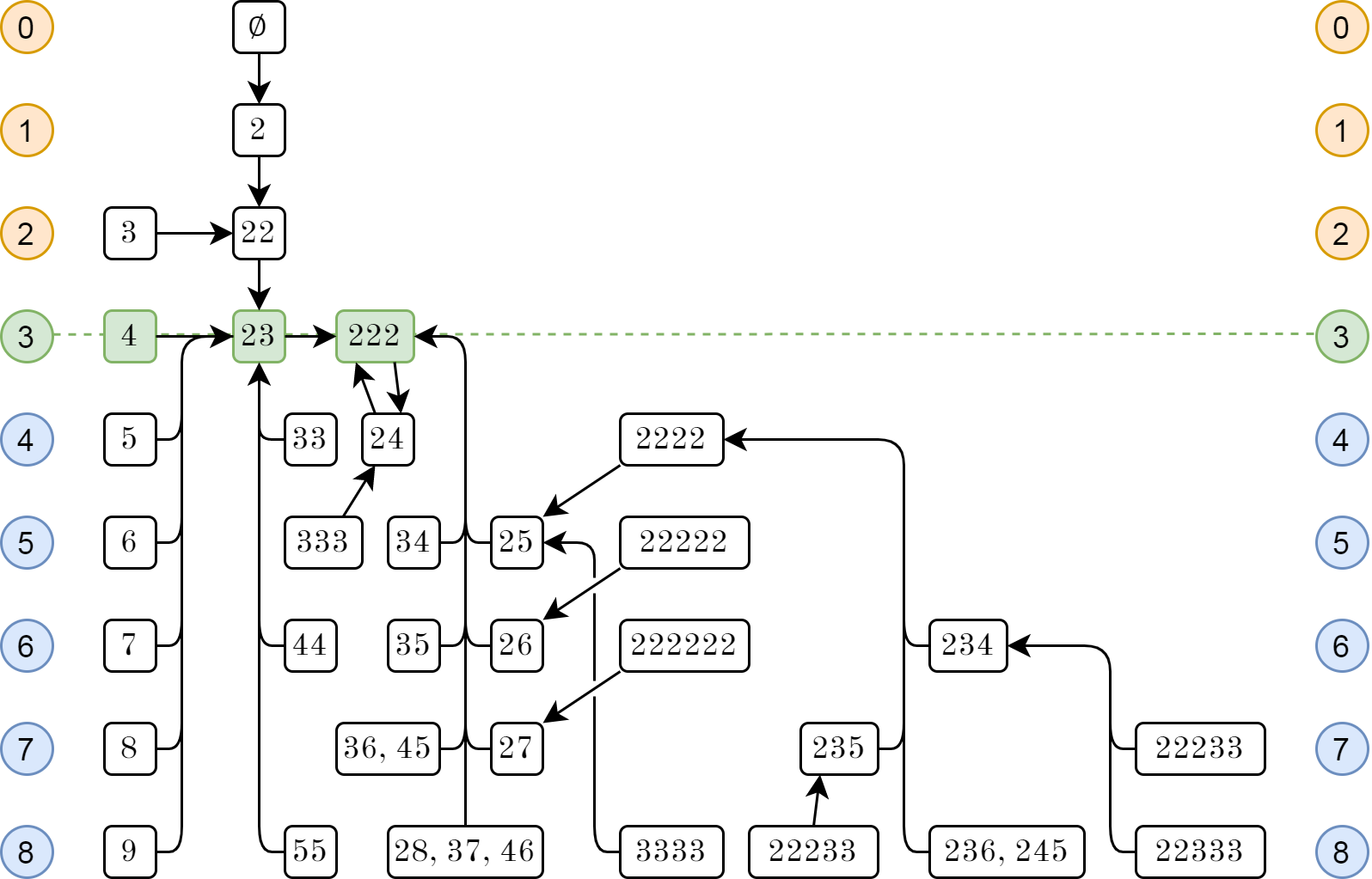}
    
    \textbf{Figure 7.2}
    
    The Core Adjectives appear in Integer Notation and are arranged vertically by $\sum_{r\in R_i}(r-1)$ as indicated on the left/right.
\end{center}

One can see a single self-cyclic node in Figure 7.1 and a loop of two nodes in Figure 7.2. As one might expect, these correspond to the parametric $1$-cycle and $2$-cycle given in theorem 5.5. And if $g_\text{naive}$ covered the general case we would be done. Unfortunately, it is $f$ which determines $R_i\rightarrow R_{i+1}$.

At this point we can specify how exactly $g_\text{naive}$ narrows down the identity of $R_{i+1}$. We must first define \textit{Deterioration}. A multiset $R$ is called a \textit{Deteriorate} of another $S$ if $R$ can be obtained from $S$ using one or both of the following operations:
\begin{enumerate}
    \item Replace any subset of the elements by their decrements discarding $1$'s (e.g. $\{2,\mathbf2,\mathbf4,5,\mathbf8\}\rightarrow\{2,\mathbf3,5,\mathbf7\}$).
    \item Replace the largest element by any lesser value or discard it entirely (e.g. $\{2,2,4,\mathbf8\}\rightarrow\{2,2,4,\mathbf4,5\}$ or $\{2,3,\mathbf5\}\rightarrow\{2,3\}$).
\end{enumerate}

\begin{lemma}
    The multiset $R_{i+1}$ is either equal to or a Deteriorate of $g_\text{naive}(R_i)$ and is equal only if $[R_i+\{m_i\}]\subseteq S_i$ and $m_{i+1}=\text{mult}_{S_{i+1}}(1)$.
\end{lemma}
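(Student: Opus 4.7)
The plan is to compute $R_{i+1}$ directly from the Ugly Recurrence $\mu(S_{i+1}) = \mu_+(A^{S_i}) + \mu(A^{\neg S_i}) + *\{1\}$, where $A = \mu(S_i) = *\{1\} + R_i + \{m_i\}$, and then compare value-by-value with the naive formula derived in the excerpt under both assumptions (A) $[R_i+\{m_i\}]\subseteq S_i$ and (B) $m_{i+1}=\text{mult}_{S_{i+1}}(1)$. The clean algebraic identity $\mu_+(A) = \mu_+(A^{S_i}) + \mu_+(A^{\neg S_i})$ lets one rewrite the recurrence so that, for each distinct value $x\in[A]\setminus[S_i]$, the true $\mu(S_{i+1})$ supplies $\text{mult}_A(x)$ in place of $\text{mult}_A(x)+1$: each such value is decremented by one and drops out (merging into the $*\{1\}$ block) if its naive contribution was already $2$.

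First I would handle failure of (A). Every value $x\in[R_i+\{m_i\}]\setminus[S_i]$ has its contribution to $\mu(S_{i+1})$ decremented as above. In $g_\text{naive}(R_i)=\{2\}+\mu_+(R_i)$ this contribution is either the $\{2\}$ summand (when $x=m_i$ and $m_i\not\in R_i$) or an element of $\mu_+(R_i)$. Decrementing those elements and discarding any that fall to $1$ is precisely deterioration operation~(1).

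Second, I would handle failure of (B). Let $L=\text{mult}_{S_{i+1}}(1)$, the summand of $\mu(S_{i+1})$ contributed by the value~$1$. When (B) holds, $L$ is the max $m_{i+1}$ and is stripped, leaving precisely $g_\text{naive}(R_i)$; when (B) fails, $m_{i+1}>L$, so some other element of the pre-max pool is stripped and $L$ itself remains in $R_{i+1}$. Since $L$ is strictly smaller than the removed element (which is at most the max of $g_\text{naive}(R_i)$ after any (1)-style decrements), this swap --- delete the largest element of $g_\text{naive}(R_i)$, then insert $L$ --- is a legal instance of deterioration operation~(2), which permits replacing the largest element by any multiset of smaller values.

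Composing: apply op~(1) for each failure of (A), then op~(2) if (B) also fails. Equality holds iff neither step fires, i.e., iff both (A) and (B) hold. The chief obstacle is the accounting in degenerate sub-cases: when $m_i\in R_i$ (so the $\{2\}$ summand and one element of $\mu_+(R_i)$ refer to the same distinct value and must be merged under $\mu$ rather than counted twice), and when an op-(1) decrement promotes another element past the old max of $g_\text{naive}(R_i)$. Both are resolved by performing op~(1) first, recomputing the max of the intermediate multiset, and then applying op~(2), using the fact that $*\{1\}$-contributions never compete for the max.
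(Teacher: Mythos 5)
Your proposal takes essentially the same route as the paper's proof: expand the Ugly Recurrence, identify each value of $[R_i+\{m_i\}]$ missing from $S_i$ with a decrement covered by operation (1), and identify the displacement of $\text{mult}_{S_{i+1}}(1)$ from the maximum slot with operation (2), handling the same degenerate subcases. The one place you are lighter than the paper is the ``only if'' direction, which you assert as ``equality holds iff neither step fires''; the paper derives it explicitly from the multiset equation (order forces $m_i\in S_i$, then sum comparisons force $m_{i+1}=\text{mult}_{S_{i+1}}(1)$ and $[R_i]\subseteq S_i$), whereas your version still needs the easy but unstated observation that a nontrivial application of either operation necessarily changes the multiset.
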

\begin{proof}
    We claim the two operations of Deterioration correspond respectively to the two assumptions made in defining $g_\text{naive}$. To see this we should start by writing out the recurrence defining $R_{i}\rightarrow R_{i+1}$ in the full messy general case:
    $$\mu(S_{i+1})=\mu([S_i]+*\{1\}+R_i+\{m_i\})=*\{1\}+\{\text{mult}_{S_{i+1}}(1)\}+\mu_+(R_i^{S_i})+\mu(R_i^{\neg S_i})+
    \begin{cases} \{2\}& \text{if } m_i\in S_i \\
    \emptyset  & \text{otherwise} \end{cases}.$$
    
    Thus consider some particular $x\in R_i$. If $x\in S_i$ then $x$ might contribute a ``$\text{mult}_{R_i}(x)+1$" to $R_{i+1}$ via $\mu_+(R_i^{S_i})$ (and fails to do so only if $m_{i+1}=\text{mult}_{R_i}(x)+1$ -- i.e. only if the contribution is the largest Adjective). But if $x\not\in S_i$ then $x$ might contribute only a ``$\text{mult}_{R_i}(x)$" (and fails to do so as in the former case). And this nearly covers the first operation of Deterioration. Note lastly in the latter case if $\text{mult}_{R_i}(x)=1$ then the contribution is absorbed into $*\{1\}$. 
    
    The second operation of Deterioration covers the possibility $\text{mult}_{S_{i+1}}(1)$ might not be the largest Adjective. If not, some element of $\mu_+(R_i^{S_i})$ or of $\mu(R_i^{\neg S_i})$ will take the place of ``$m_{i+1}$" and $\text{mult}_{S_{i+1}}(1)$ will fall into $R_{i+1}$ -- or will be discarded entirely if $=1$. Thus the second operation covers all such possible exchanges (in fact, it covers more -- but its tight enough for the use we will make of it).
    
    From this the ``if" of the lemma's ``only if" follows immediately. Conversely, if $R_{i+1}=g_\text{naive}(R_i)$ then we have
    $$\{m_{i+1}\}+\mu_+(R_i)+\{2\}=\{\text{mult}_{S_{i+1}}(1)\}+\mu_+(R_i^{S_i})+\mu(R_i^{\neg S_i})+
    \begin{cases} \{2\}& \text{if } m_i\in S_i \\
    \emptyset  & \text{otherwise} \end{cases}.$$
    with all elements $\ge 2$. Equality of order dictates $m_i\in S_i$ simplifying the equation to
    $$\{m_{i+1}\}+\mu_+(R_i)=\{\text{mult}_{S_{i+1}}(1)\}+\mu_+(R_i^{S_i})+\mu(R_i^{\neg S_i}).$$
    And since $m_{i+1}$ is at least as big as anything in $\mu_+(R_i)$ we may presume either $m_{i+1}=\text{mult}_{S_{i+1}}(1)$ or $m_{i+1}\in \mu_+(R_i^{S_i})$. But in the latter case either we have also the former or the elements of the right-hand-side sum to strictly less than the sum of those on the left-hand-side. We may then presume $m_{i+1}=\text{mult}_{S_{i+1}}(1)$ and are left with 
    $$\mu_+(R_i)=\mu_+(R_i^{S_i})+\mu(R_i^{\neg S_i}).$$
    Considering the sum of elements again tells us $\mu_+(R_i)=\mu_+(R_i^{S_i})$ implying $R_i=R_i^{S_i}$ implying finally $[R_i]\subseteq S_i$.
\end{proof}

These lemmas taken together are enough to wrangle $R_i$'s list of candidate values from the infinite into the finite.

\begin{theorem}
    Letting $c_0=|[\mu(S_0)]|$, then for
    $$k= 13+\log_{\sqrt{2}}\log_{5/4}\frac{\max(c_0,10)}{8}$$
    then either
    \begin{enumerate}
        \item $S_k$ is a fixed point under $f$ (i.e. and therefore an Inventory Loop), or
        \item The Core Adjectives $R_k$ in Integer Notation are one of $2,3,4,22,23,24,222,$ or are the empty set $\emptyset$.
    \end{enumerate}
\end{theorem}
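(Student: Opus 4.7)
The plan is to stitch together Theorem 6.5, Lemma 7.3, and the finite-state diagrams of $g_\text{naive}$ in Figures 7.1--7.2. First I would invoke Theorem 6.5 to obtain an index $k_0 \le 6 + \log_{\sqrt{2}}\log_{5/4}(\max(c_0,8)/8)$ with $c_{k_0},\ c_{k_0+1}\le 7$; Corollary 7.2 then places $R_{k_0+2}$ in the explicit list of $64$ multisets displayed there. The substitution of $\max(c_0,10)$ rather than $\max(c_0,8)$ in the stated $k$ is a cushion for the small-$c_0$ cases where Theorem 6.5's $\log\log$ quantity behaves badly (e.g.\ is undefined when $c_0\le 8$), and this arithmetic nudge ensures $k-(k_0+2)\ge 5$ in every case, giving five remaining iterations in which to finish.

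Next I would propagate $R_{k_0+2},\ R_{k_0+3},\ \ldots,\ R_k$ forward using Lemma 7.3, under which each step is either the exact map $g_\text{naive}$ or a strict Deterioration of $g_\text{naive}(R_i)$. Inspection of Figures 7.1--7.2 identifies the two $g_\text{naive}$-attractors inside the 64-list: the fixed point $\{2,2,3\}$ (matching Theorem 5.5's 1-cycle family) and the 2-cycle $\{2,2,2\}\leftrightarrow\{2,4\}$ (matching the 2-cycle family), and every one of the 64 nodes reaches one of these two attractors within a handful of $g_\text{naive}$-steps. If every transition between indices $k_0+2$ and $k$ is exact, the ``only if'' clause of Lemma 7.3 enforces $[R_i+\{m_i\}]\subseteq S_i$ and $m_{i+1}=\text{mult}_{S_{i+1}}(1)$ throughout, placing the underlying Inventory Sequence onto the corresponding Inventory Loop --- giving conclusion (1) when the attractor is $\{2,2,3\}$ and conclusion (2) via $R_k\in\{222,24\}$ when the attractor is the 2-cycle. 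If instead at least one strict Deterioration occurs in that window, a finite-state check on Figures 7.1--7.2 --- exploiting that Deteriorations weakly decrease both $|R|$ and $\sum_{r\in R}r$, while $g_\text{naive}$ applied immediately after a Deterioration restarts the trajectory near the low-$\sum(r-1)$ rail of the diagram --- shows $R_k$ must then land in the eight terminal nodes $\emptyset,\ 2,\ 3,\ 4,\ 22,\ 23,\ 24,\ 222$ at the bottom of Figures 7.1--7.2.

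The hard part is the finite-state verification just sketched: one must confirm that no alternating pattern of exact $g_\text{naive}$-moves and strict Deteriorations, starting from any of the $64$ possible values of $R_{k_0+2}$, can simultaneously avoid (a) being forced into an Inventory Loop through an unbroken $g_\text{naive}$-chain and (b) dropping into the $8$-node terminal set --- for more than five iterations. This is conceptually a graph traversal of Figures 7.1--7.2 intersected with the two Deterioration operations, most cleanly organized by tracking $\sum_{r\in R}(r-1)$ together with a ledger of where each Deterioration can send the trajectory; but the bookkeeping is fiddly and is really where the bulk of the argument lives.
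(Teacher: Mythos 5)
Your overall architecture matches the paper's: Theorem 6.5 plus the 64-multiset corollary to seed $R_{k_0+2}$, then forward propagation through the $g_\text{naive}$-plus-Deterioration graph, organized by the fact that Deterioration strictly decreases $\sum_{r\in R}(r-1)$ (which is also what makes the $64$-list closed under Deterioration and yields exactly two terminal strongly-connected components, $\{\emptyset,2,3,4,22,23,24,222\}$ and $\{223\}$). That part of your sketch is sound and is essentially the paper's argument; the arithmetic $13 = 8 + 5$ also lines up.

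The genuine gap is in the $\{2,2,3\}$ branch, where you assert that an unbroken chain of exact $g_\text{naive}$-transitions, via the ``only if'' clause of the Deterioration lemma, ``plac[es] the underlying Inventory Sequence onto the corresponding Inventory Loop.'' That does not follow from constancy of the Core Adjectives alone: $S_i$ is determined by $[S_i]$, the count $\text{mult}_{\mu(S_i)}(1)$ of $1$-Adjectives, and the maximum Adjective $m_i$ in addition to $R_i$, and none of these is pinned down by $R_i=R_{i+1}=R_{i+2}=\{2,2,3\}$ without further work. The paper spends the entire second half of its proof closing exactly this hole: from $[R_i+\{m_i\}]\subseteq S_i$ it deduces $[S_{i+2}]=[S_{i+1}]=[S_i]$, then uses $|[S_j]|=\text{mult}_{\mu(S_j)}(1)+4$ to stabilize the $1$-Adjective count, then $\text{mult}_{S_{i+1}}(1)=\text{mult}_{S_{i+2}}(1)$ to force $m_{i+1}=m_{i+2}$, and only then concludes $\mu(S_{i+1})=\mu(S_{i+2})$ and hence $S_{i+2}=S_{i+3}$ --- and it must separately rule out the case $1\notin S_i$ (which it does by showing $\mu(S_{i-1})$ would have to be all $2$'s, forcing $R_i=\emptyset$, a contradiction). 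You have also located the difficulty in the wrong place: the finite-state traversal you flag as ``where the bulk of the argument lives'' is comparatively routine once one notes the monotone quantity $\sum_{r\in R}(r-1)$, whereas the missing fixed-point derivation is the substantive step.
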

\begin{proof}
    Theorem 6.5 and Corollary 7.1.1 taken together imply there exists
    $$i\le 8+\log_{\sqrt{2}}\log_{5/4}\frac{\max(c_0,10)}{8}$$
    such that $R_i$ is one of the 64 multisets there listed. But more strongly, we make the claim not simply of $R_i$ but for all $R_j$ thereafter (i.e. that $R_j$ is one of the 64 multisets for $j\ge i$). Those $64$ multisets are clearly closed under $g_\text{naive}$ (as Figures 7.1 and 7.2 show). But they are closed also under Deterioration since $\sum_{r\in R_i}(r-1)$ is strictly decreased. Lemma 6.2 tells us then if $R_i$ is one of our 64 values, then so is $R_{i+1}$. In other words for 
    $$j\ge 8+\log_{\sqrt{2}}\log_{5/4}\frac{\max(c_0,10)}{8}$$
    the multiset $R_j$ is one of the 64 values. 
    
    Further, after adding Deteriorate edges to Figures 7.1 and 7.2 the resulting graph has exactly two self-cyclic strongly-connected-components (SCCs). This can of course be verified by computer but can also be seen more easily from the Figures. Since Deterioration strictly decreases $\sum_{r\in R_i}(r-1)$ we are assured all but 3 Deteriorate edges originating beneath the dashed green line point upwards (with exceptions $2222\rightarrow \{24,5\},\ 22222\rightarrow \{25,6\},\ $ and $222222\rightarrow\{27, 8\}$ which each point horizontally). Hence the graphs were arranged vertically as they were. 
    
    After $4$ additional iterations the sequence $\{R_i\}_{i=0}^\infty$ must therefore enter one of the two SCCs:
    \begin{enumerate}
        \item $\{\emptyset,2,3,4,22,23,24,222\}$ or
        \item $\{223\}$.
    \end{enumerate}
    If the first is entered, the theorem holds true. If instead the 2nd SCC is entered, then after another additional iteration (so after $5$ in total) we must have $R_i=R_{i+1}=R_{i+2}=\{2,2,3\}$ for some $i> 2$. It turns out this is enough information to know $S_{i+2}$ a fixed point of $f$ (and on this we spend the proof's remainder).
    
    To make things easier we presume $1\in S_i$ (and address the case $1\not\in S_i$ last). Lemma 7.2 tells us $[R_i+\{m_i\}]\subseteq S_i$. But since in addition to $m_i$ and the elements of $R_i$ the multiset $\mu(S_i)$ contains only $1$'s, the former really means $[\mu(S_i)]\subseteq S_i$ implying $[S_{i+1}]=[S_i]$ -- or in English, that no new elements have appeared. The same can be said also of $\mu(S_{i+1})$ so in total 
    $$[S_{i+2}]=[S_{i+1}]=[S_i].$$
    The definition/equation $\mu(S_j)=*\{1\}+R_j+\{m_j\}$ also tells us
    $$|[S_j]|=|\mu(S_j)|=\text{mult}_{\mu(S_j)}(1)+4$$
    for $j=i,i+1,i+2.$ Putting the former with the latter we then obtain
    $$\text{mult}_{\mu(S_i)}(1)=\text{mult}_{\mu(S_{i+1})}(1)=\text{mult}_{\mu(S_{i+2})}(1).$$
    Here again the assumption $1\in S_i$ is relevant. By it, the definition $S_{j+1}=[S_j]+\mu(S_j)$ tells us
    $$\text{mult}_{S_{j+1}}(1)=\text{mult}_{\mu(S_j)}(1)+1$$
    for $j=i,i+1$. In particular, we therefore have $\text{mult}_{S_{i+1}}(1)=\text{mult}_{S_{i+2}}(1)$ which by Lemma 7.2 again tells us $m_{i+1}=m_{i+2}$. Now if we simply put together the deductions A) $R_{i+1}=R_{i+2}$, B) $\text{mult}_{\mu(S_{i+1})}(1)=\text{mult}_{\mu(S_{i+2})}(1)$, and C) $m_{i+1}=m_{i+2}$ it follows that $\mu(S_{i+1})=\mu(S_{i+2})$. And paired with $[S_{i+1}]=[S_{i+2}]$ this of course tells us $S_{i+2}=S_{i+3}$.
    
    Lastly if $1\not\in S_i$ then $\mu(S_{i-1})$ must contain only $2$'s since if not, we would have
    $$|S_{i-1}|=\sum_{m\in \mu(S_{i-1})}m>2|\mu(S_{i-1})|=|S_i|.$$
    But Lemma 4.2 tells us a Parent can only be larger in order than its Child if the first Generation -- a contradiction since $i>2$. But then if $\mu(S_{i-1})=*\{2\}$ (i.e. if the Adjectives are all $2$'s) then $$\mu(S_i)=\mu([S_{i-1}]+*\{2\})=*\{1\}+\{\text{mult}_{S_i}(2)\}$$
    which would imply $R_i=\emptyset$ -- a contradiction as well. Thus we may (as done above) presume $1\in S_i$.
\end{proof}

\begin{corollary}
    The period of any Inventory Sequence can be determined after
    $$k= 13+\log_{\sqrt{2}}\log_{5/4}\frac{\max(c_0,10)}{8}$$
    iterations from the initial value $S_0\in\mathbb{N}_+^*$.
\end{corollary}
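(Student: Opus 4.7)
The plan is to chain Theorem 7.3 with the complete classification of Inventory Loops (Theorems 5.3 and 5.5) and the period cap of Corollary 5.6. Theorem 7.3 at iteration $k$ leaves two alternatives: either $S_k$ is already a fixed point of $f$ (so the period is $1$), or the Core Adjectives $R_k$ lie in the eight-element SCC $\{\emptyset, 2, 3, 4, 22, 23, 24, 222\}$. The goal reduces to pinning down the period in the second case.

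For that case I would invoke Lemma 7.2 to confine $R_j$ to this SCC for all $j\ge k$, and use the strict descent of $\sum_{r\in R_j}(r-1)$ along Deteriorate edges to drive the trajectory into the internal $g_{\text{naive}}$-cycle $222\leftrightarrow 24$. Matching this $R$-pattern against the enumerated loops: by Theorem 5.5 for $n = |[S_k]|\ge 8$ the only realizing family has period $2$; by Theorem 5.3 the only relevant smaller-order non-fixed-point loops are the period-$2$ cycle at $n=6$ and the unique period-$3$ cycle at $n=7$. Corollary 5.6 caps all periods by $3$, so these exhaust the possibilities. Thus the period-determination procedure is: first check whether $S_k$ is a fixed point (output $1$); otherwise read $n = |[S_k]|$, which is non-decreasing in $k$ by $[S_i]\subseteq [S_{i+1}]$, and output $3$ when $n=7$ and $2$ otherwise.

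The main obstacle will be confirming that $n = |[S_k]|$ measured at iteration $k$ already matches the eventual loop's $n$. A priori $|[S_j]|$ can still grow for $j>k$, potentially crossing the $n=7$ boundary that separates the period-$3$ regime from the period-$2$ regime. I expect this to be resolved by the SCC confinement of $R_k$ together with the fact that Theorems 5.3 and 5.5 partition the loop families disjointly by $n$: once the Core Adjectives are locked into the eight-element SCC, any later appearance of a new distinct element of $S_j$ must still be compatible with some enumerated loop family, and — crucially — with one of the same period as that already prescribed by the $R_k$ and $n_k$ read at time $k$. Verifying this compatibility via the Ugly Recurrence for the few remaining values of $R_k$ and $n_k$ is expected to be a short but careful case analysis rather than a deep argument.
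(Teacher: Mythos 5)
Your reduction to three loop types (fixed points, the period-$2$ loops, and the unique period-$3$ loop at $n=7$) matches the paper's, but the obstacle you flag in your last paragraph is exactly where the content of the proof lives, and your proposed resolution does not close it. The paper closes it with a finite computation rather than a case analysis: it verifies by machine that every loop listed in Theorem 5.3 (i.e.\ every loop on fewer than $16$ elements) is entered within the first $12$ iterations, the worst case being $S_0=6\{6\}+7\{7\}$, which loops at $S_{12}$ with period $3$. Hence by iteration $k\ge 13$ any sequence destined for a small loop is already inside it and its period can be read off directly, while a sequence not yet looping must end in a Theorem 5.5 family; since $R_k$ lies in the eight-element set, which excludes the $1$-cycle's Core Adjectives $\{2,2,3\}$, that period is $2$. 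Without this computed fact your decision rule ``output $3$ iff $|[S_k]|=7$'' is unsound in both directions: $|[S_j]|$ can still grow after iteration $k$, so a sequence with $|[S_k]|=6$ could a priori still be headed for the $n=7$ period-$3$ loop, and one with $|[S_k]|=7$ could still grow to $n\ge 8$ and land in the period-$2$ family. The sound test is membership of $S_k$ in the explicitly enumerated small loops, and its correctness rests on the pre-period-$\le 12$ verification; I do not see how the ``short case analysis via the Ugly Recurrence'' you sketch would reproduce that bound.

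A secondary inaccuracy: confinement to the eight-element SCC does not drive the trajectory into the $g_{\text{naive}}$-cycle $\{2,2,2\}\leftrightarrow\{2,4\}$, and it cannot by itself separate the period-$2$ and period-$3$ regimes. Computing directly from Theorem 5.3, the $n=7$ period-$3$ loop has Core Adjectives cycling $\{2,3\}\to\{2,2,2\}\to\{2,4\}\to\{2,3\}\to\cdots$ indefinitely, and the $n=6$ period-$2$ loop cycles $\{2,2,2\}\leftrightarrow\{2,3\}$; the quantity $\sum_{r\in R_j}(r-1)$ decreases only along Deteriorate steps and is restored along the others, so it is not a monotone potential on the SCC. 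All three non-fixed-point loop types therefore realize Core Adjectives from the same eight-element set, which is precisely why the extra computational input on small-order pre-periods is indispensable.
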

\begin{proof}
    This is odd since (as we will see later) the Loop itself may not begin for many iterations after $S_k$. The former Theorem tells us after $k$ iterations either $S_k$ is a fixed point (an Inventory Loop with period $1$) or the Core Adjective multiset $R_k$ is in Integer Notation one of $2,3,4,22,23,24,222$ or is the empty multiset $\emptyset$. But those eight possibilities are closed under $g_\text{naive}$ and Deterioration and thus the Core Adjectives will be one of those eight possible multisets.
    
    Out of the two parametric Loops of Theorem 5.5 only the $2$-cycle uses the eight Core Adjectives (the $1$-cycle has $R_k=\{2,2,3\}$). 
    
    And the Loops listed in Theorem 5.3 all start within the first $12$ iterations. This was checked with computers by generating family trees. The longest pre-period was $S_0=6\{6\}+7\{7\}$ which started its Loop at $S_{12}=\{1,1,1,1,1,2,2,3,3,4,5,5,6,7\}$ with period $3$.
\end{proof}

\section{The Main Stuff}

\begin{center}
    \enquote{\textit{I am to give my readers not the best absolutely but the best I have.}}
    
    - C. S. Lewis, \textit{The Problem of Pain}
\end{center}

The author found it difficult to write this section clearly even though, containing the main result, it is the most important. Any reader therefore intending to build higher should inspect this foundation in case the author fell short of proper rigor. They ask your patience.

We should begin by summarizing our position. Theorem 7.3 told us some Inventory Sequences reach a Loop in $O(\log\log)$ time. And further, Core Adjectives of the Sequences which don't are one of 
$$\emptyset, 2, 3, 4, 22, 23, 24, 222$$
past a certain point (also reached in $O(\log\log)$ time). Our remaining task then is to analyze how exactly the former eight multisets of Core Adjectives jump from one to each other. In other words, what values can $(R_i, R_{i+1})$ take?

The rules of Deterioration laid out in the previous section are enough to specify which possible values $R_{i+1}$ might take (that is what Lemma 7.2 was saying). But if our Sequence is order $16$ or larger -- equivalently, if $|\mu(S_i)|\ge 8$ -- and we know also which elements (if any) of $\mu(S_i)$ have appeared for the first time in the Sequence, then $R_i$ \textit{is} sufficient information to determine $R_{i+1}$. In other words, if there are at least eight Adjectives then the Core Adjectives and new appearances of one Generation are enough to work out the Core Adjectives of the next Generation (we will see it is also enough to work out $m_{i+1}$).

What follows is said (and understood) more easily if we define an Inventory Sequence's \textit{Maturity}. We say such a Sequence has \textit{Matured} at iteration $i$ if $|S_i|\ge 16$ and the Core Adjectives are one of the former eight possibilities. If either of these assumptions is broken we say the Sequence is \textit{Immature}. \footnote{So for example, the parametric $1$-cycle from Theorem 5.5 might well have over $16$ elements per term. However the Core Adjectives ($223$) are not one of the former eight possibilities. Thus we say such a Sequences has Looped \textit{Immaturely}.} Theorem 7.3 essentially says any Inventory Sequence either Loops or reaches Maturity in $O(\log\log |[\mu(S_0)]|)$ time.

Let's assume then our Sequence has Matured and redraw Figure 7.2 with just the Core Adjectives we care about. This time Deteriorate edges corresponding to new element appearances are shown (and are labeled by the elements making new appearance).
\begin{center}
    \includegraphics[scale=0.4]{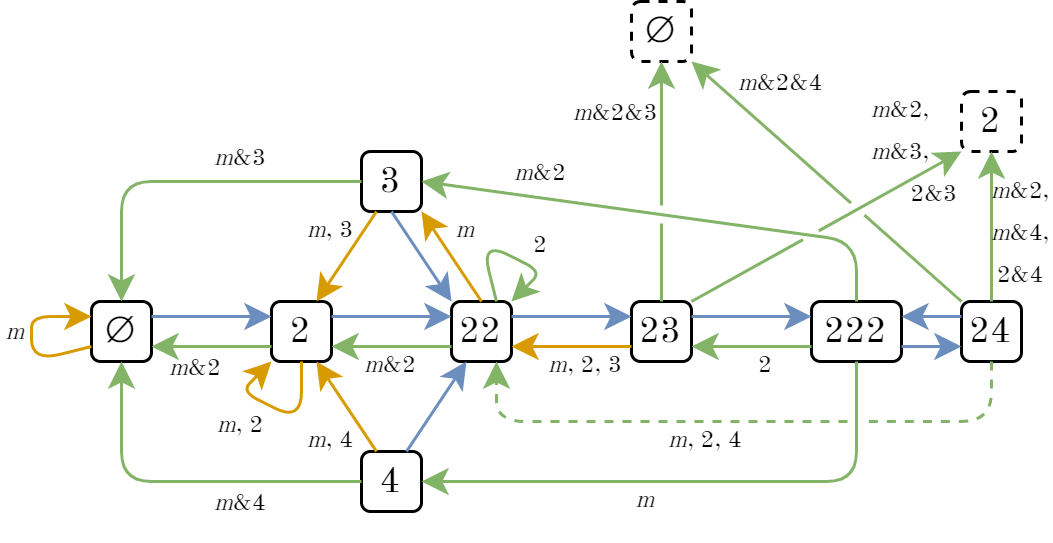}
    
    \textbf{Figure 8.1}
    
    The letter ``$m$" is shown when value of $m_i=\max \mu(S_i)$ is making a new appearance. The ``\&" denotes multiple new elements appearing together (e.g. ``$m$\&$2$\&$3$" means $m_i,2,$ and $3$ have each and all together appeared for the first time in the entire Inventory Sequence). Blue arrows are for now new appearances (and therefore match up with the black arrows of Figure 7.2). Orange arrows are for new appearance edges which can be taken any amount of times. Green arrows are for new appearance edges which can be taken only once (or, for the dashed arrow, at most twice). The dashed nodes containing $\emptyset$ and $2$ are only placeholders to reduce clutter.
\end{center}

We won't compute the correctness of all $26$ arrows by hand. Any individual arrow can be checked if needed. However the claim some edges can be taken at most once (or twice) does require longer justification.

\begin{lemma}
    If an Inventory Sequence is past Maturity then particular consecutive Core Adjective pairs $(R_i, R_{i+1})$ can further appear at most a fixed number of times -- given explicitly by the following table:
    \begin{center}
        \begin{tabular}{c|c}
            Edge(s) & Max Occurrences\\ \hline
            $222\rightarrow 3, 4, 23$ & $1$\\
            $24\rightarrow \emptyset$ & $1$\\
            $24\rightarrow 22$ & $4$\\
            $23, 24\rightarrow 2$ & $2$\\
            $23\rightarrow \emptyset$ & $1$\\
            $22\rightarrow 2, 22$ & $1$\\
            $2,3,4\rightarrow \emptyset$ & $1$\\
        \end{tabular}
    \end{center}
\end{lemma}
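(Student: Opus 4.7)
The plan is to exploit Lemma 7.2, which tells us that a non-naive transition $R_i \to R_{i+1}$ (one in which $R_{i+1}$ is a strict Deteriorate of $g_{\text{naive}}(R_i)$) occurs only when either some element of $R_i + \{m_i\}$ is absent from $S_i$, or else $m_{i+1} \neq \text{mult}_{S_{i+1}}(1)$. The first kind of failure produces a first appearance of the offending value in $\mu(S_{i+1})$; the second kind forces an element of $\mu_+(R_i)$ to exceed $\text{mult}_{S_{i+1}}(1)$, which is itself a structural rarity past Maturity. Each kind of event contributes only a bounded number of occurrences, and it is this accounting that should underwrite the table.

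First I would compute $g_{\text{naive}}(R_i)$ for each edge in the table and match it against the listed $R_{i+1}$ to pin down exactly which deterioration operations must fire. Each firing of operation 1 on a value $x \in R_i$ is witnessed by $x \notin S_i$, meaning $x$ makes its very first appearance in $\mu(S_{i+1})$. Since any fixed positive integer can debut in the Sequence only once, every edge whose required deteriorations pin down a single specific new value can occur at most once. I expect this to dispose of the ``Max Occurrences $=1$'' rows in one pass: $222 \to 3, 4, 23$, $24 \to \emptyset$, $23 \to \emptyset$, $22 \to 2, 22$, and $2, 3, 4 \to \emptyset$.

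For the ``$2$'' row, the deterioration required to reach $R_{i+1}=2$ from either $R_i=23$ or $R_i=24$ can be realized via either of two disjoint first-appearance events (one associated with each source node), so the combined count is capped at two. For the ``$4$'' row $24 \to 22$, however, the deterioration $\{2,2,2\} \to \{2,2\}$ is obtained by operation 2 discarding the top element, which corresponds to $m_{i+1} \neq \text{mult}_{S_{i+1}}(1)$ rather than to the first appearance of any fixed value. I would argue that each such occurrence must either coincide with a new first appearance elsewhere (thereby drawing on the first-appearance budget) or else strictly alter $m_i$ or its multiplicity profile; since by Section 4 the Height has stabilized and the supply of candidate new values compatible with $R_i = \{2,4\}$ is bounded, the total is capped at four.

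The hard part, as the author already warns in the section's opening, will be giving a clean rigorous accounting for the $4$-bound on $24 \to 22$, because it is the only row not pinned directly to a single first-appearance event. I would formalize it as a potential-function argument: assign to each Mature state a nonnegative integer counting the still-available elements that could serve as $m_{i+1}$ consistent with the constraint $R_i = \{2,4\}$, verify that every traversal of $24 \to 22$ strictly decreases this potential, and check that the potential at the moment of Maturity is at most $4$. The remaining rows slot into the same framework with potentials of value $1$ or $2$, matching the smaller bounds; the real challenge is finding the potential clean enough that the case analysis does not grow unwieldy.
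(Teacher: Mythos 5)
There is a genuine gap, and it sits exactly where the paper itself says the difficulty lies. Your first\--appearance accounting works for edges whose Deterioration is forced by the debut of a \emph{fixed} small value ($2$, $3$, or $4$): such a value can appear for the first time only once, so those edges occur at most once (or twice, when two distinct sources can each trigger the same debut). But you then sweep $222\rightarrow 4$ into that same pass, and this is wrong: per Figure 8.1 the edges $222\rightarrow 4$ and $24\rightarrow 22$ are triggered by the new appearance of $m_i=\max\mu(S_i)$ \emph{only}, and $m_i$ is a different value at every generation. The argument ``a fixed integer debuts once'' says nothing about an edge whose witness is a freshly debuting $m_i$ each time; a priori both of these edges could recur indefinitely. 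This is precisely why the paper singles them out and cannot close the lemma with first\--appearance counting alone.

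For these two exceptional edges the paper's actual mechanism is Backtracking: assuming Maturity, it derives explicit formulas $m_i=n-2k+1-\sum_{x\in R_i}(x-1)$ (tabulated for each of the eight Core Adjective multisets and each count $k$ of new appearances), then works backwards from a hypothesized occurrence through possible predecessor Adjective multisets until every branch forces an element to appear ``before its first appearance'' or forces Immaturity. The finite tree for $222\rightarrow 4$ ($9$ nodes) yields the bound of $1$; the tree for $24\rightarrow 22$ ($682$ nodes, computer\--generated, height $14$) shows at most two $m_i$\--only occurrences, which combine with one occurrence each for the debuts of $2$ and of $4$ to give the stated total of $4$. Your proposed potential function --- ``still\--available elements that could serve as $m_{i+1}$ consistent with $R_i=\{2,4\}$'' --- is not constructed: you neither exhibit the potential, nor show it strictly decreases across a traversal, nor justify why it would equal at most $4$ at Maturity (the $4$ in the table is a sum $1+1+2$ of three separately bounded event types, not a single decreasing quantity). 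Without some substitute for the backtracking step, the bounds for $222\rightarrow 4$ and for the $m_i$\--only occurrences of $24\rightarrow 22$ remain unproved.
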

\begin{proof}
    Most edges occur at most once or twice because they require the new appearance of $2,3,$ and/or $4$ which by definition can happen at most once. Nothing can appear for the first time \textit{twice}. The two edges which could plausibly appear repeatedly are $222\rightarrow 4$ and $24\rightarrow 22$ requiring the new appearance of $m_i$ only.
    
    To take care of these exceptional edges, we will use Backtracking. The process is tedious to define so we begin by just showing the Backtracking Tree for $222\rightarrow 4$ and define afterwards.
    
    \begin{center}
        \includegraphics[scale=0.3]{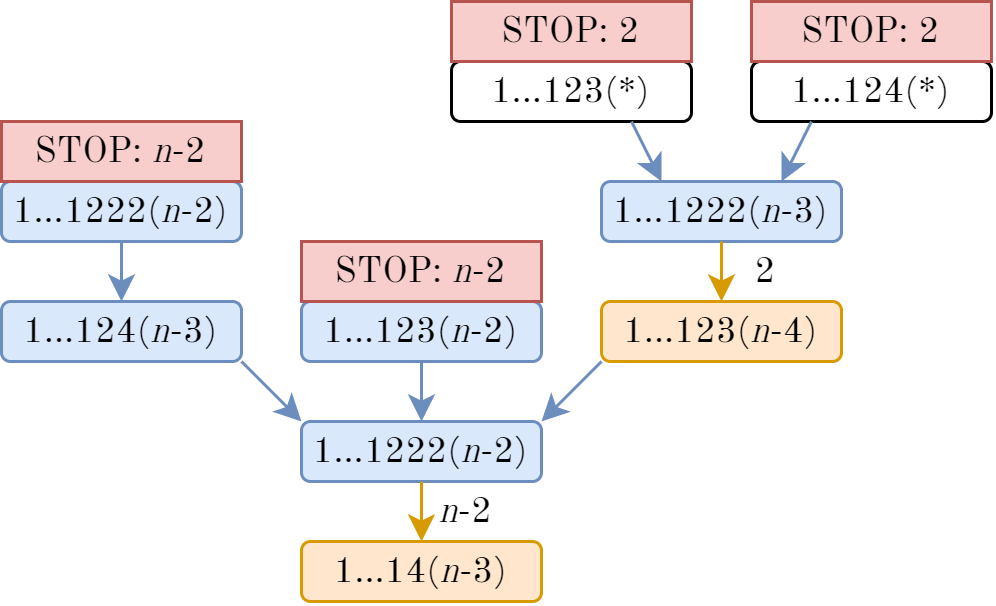}
        
        \textbf{Figure 8.2}
    \end{center}
    
   The idea is to start with $\mu(S_i),\mu(S_{i+1})$ and work backwards to possible values of $\mu(S_{i-1}), \mu(S_{i-2}), \mu(S_{i-3}),$ ... and so on. The trick is that by tracking the appearance of new elements carefully, we can terminate the Backtracking after a finite number of steps. This is done by forcing the Sequence into Immaturity (at which point we don't push farther since we have a good bound on the development of Maturity\footnote{As some of us wish we had generally.}). The tedious bit is determining what value each $m_j$ must take in terms of $n=|\mu(S_i)|$. Notice Maturity implies $n\ge 8$. Our work from Section 5 will help us. Let's take an example.
    
    If $R_i=\{2,2,2\}$ we claim either $m_i=n-2$ or the Sequence has just Matured at $S_i$. If the Sequence was also Mature at $S_{i-1}$ then there were \textit{no new appearances} in $\mu(S_{i-1})$ -- or equivalently, $[S_i]=[S_{i-1}]$. This is because the multiset $\{2,2,2\}$ receives only blue arrows in Figure 8.1. We therefore know $|\mu(S_{i-1})|=|\mu(S_i)|=n$ and $|S_i|=2|\mu(S_{i-1})|=2n$. Putting these together we can derive
    $$m_{i}+2=1+1+1+(m_i-1)=\sum_{x\in \mu(S_i)}(x-1)=|S_i|-|[\mu(S_i)]|=2n-n=n$$
    implying $m_i=n-2$. 
    
    Similarly if $k$ new elements appear we can amend the derivation starting instead with $|\mu(S_{i-1}|=|\mu(S_i)|-k=n-k$ and $|S_i|=2n-2k$. The corresponding result is $m_i=n-2k+1-\sum_{x\in R_i}(x-1).$ Accordingly let's make a table of $m_i$ values (assuming Maturity in the prior Generation):
    \begin{center}
        \begin{tabular}{c|c|c|c|c}
            $R_i$ & $k=0$ & $k=1$ & $k=2$ & $k=3$ \\ \hline
            $\emptyset$ & -- & $n-1$ & $n-3$ & $n-5$ \\
            $2$ & $n$ & $n-2$ & $n-4$ & -- \\
            $3$ & -- & $n-3$ & $n-5$ & -- \\
            $4$ & -- & $n-4$ & $n-6$ & -- \\
            $22$ & $n-1$ & $n-3$ & -- & -- \\
            $23$ & $n-2$ & $n-4$ & -- & -- \\
            $24$ & $n-3$ & -- & -- & -- \\
            $222$ & $n-2$ & -- & -- & -- \\
        \end{tabular}
    \end{center}
    The em dashes ``--" mark impossible iterations past Maturity. For example, $R_i=\{24\}$ and $k=1,2,3$ are marked because past Maturity $R_i=\{2,2,2\}$ appears only when no new elements have appeared ($222$ receives only blue arrows in Figure 8.1). Similarly, only $R_i=\emptyset$ is unmarked in the $k=3$ column because only $\emptyset$ receives arrows requiring $3$ new elements.
    
    There is another picky point to be made about Backtracking. Once we have fixed $n=|\mu(S_i)|$ for some $i$ the values of $m_j$ for $j\not=i$ may not be precisely what our table specifies. For example consider $222\rightarrow4$'s Backtracking Tree letting $\mu_(S_i)=\{1,...,1,2,2,2,n-3\}$. Both ``$1...14(n-3)$" and ``$1...1222(n-3)$" appear when ``$n-3$" appears nowhere in their rows of the table. Why? Because when new elements appear $m_j$ is incremented (or decremented if Backtracking) accordingly. Thus $m_{i+1}$ is one \textit{more} than the table dictates because it appears \textit{after} the new appearance of $n-2$. Similarly, $m_{i-2}$ in ``$1...1222(n-3)$" is one \textit{less} than expected because it appears \textit{before} the new appearance of $2$.
    
    Lastly, at each turn of Backtracking we steer inside the boundaries of Maturity and terminate when we are forced to say some element appears \textit{before} its new appearance -- a contradiction by definition. The Inventory Sequence may well continue backwards further by another route but any such route can be taken only before reaching Maturity.
    
    The Backtracking Tree for $222\rightarrow 4$ tells us the edge appears (if at all) within $4$ iterations after Maturing and, at most, once in total. 
    
    The Backtracking tree for $24\rightarrow 22$ is much larger but the process of creation is identical using the table and decrementing specified above. Whereas the $222\rightarrow 4$ tree contained $9$ nodes, the $24\rightarrow 22$ tree contains $682$ nodes (though our computer-generated tree has some redundancy so the minimal nodes required for the computation might be a hundred or so smaller). The full tree and the code to produce it is available by links in the references. The tree has a height of $14$ and every path down contains at most two occurrences of $24\rightarrow 22$. We visualize here a single path from our output file:
    \begin{center}
        \includegraphics[scale=0.28]{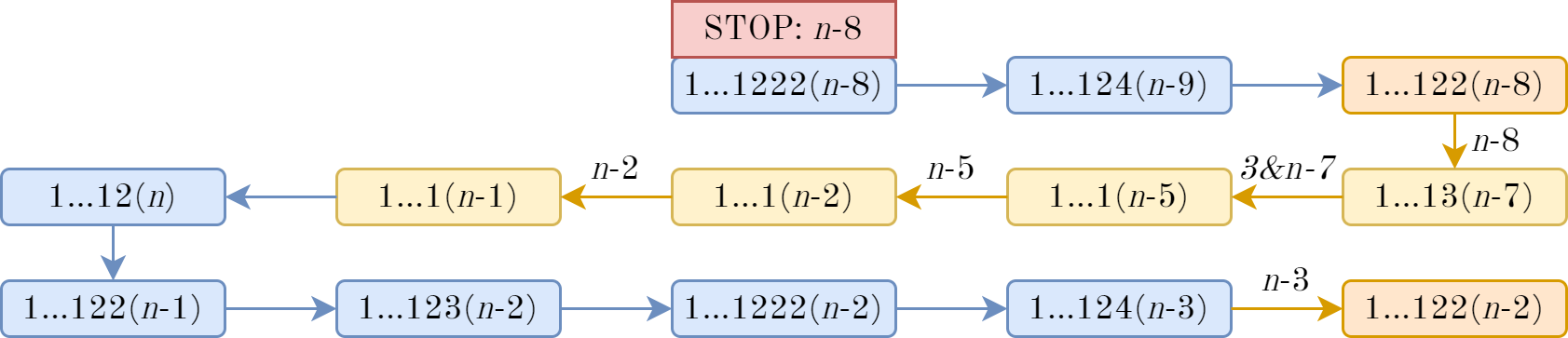}
        
        \textbf{Figure 8.3}
    \end{center}
    
    Thus with the new appearance of $2$ or $4$, the edge $24\rightarrow 22$ can occur at most once (and with the new appearance of $m_i$, at most twice). And there are a total of $1+1+2=4$ occurrences at most.
\end{proof}

We are nearly ready for the main theorem. Two lemmas are still needed. The first will tell us when and how exactly a repetition of Core Adjectives gives rise to an Inventory Loop.

\begin{lemma}
    Suppose $i\ge 2$ and for some $k\ge 1$
    $$\emptyset\not=R_i{\rightarrow}R_{i+1}{\rightarrow}...{\rightarrow}R_{i+k}=R_i{\rightarrow}R_{i+k+1}=R_{i+1}$$
    where ``$\rightarrow$" represents mapping by $g_\text{naive}.$ Then $S_{i+2}$ is a member of an Inventory Loop.
\end{lemma}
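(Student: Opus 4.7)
My plan is to show $S_{i+1}=S_{i+k+1}$; the determinism of $f$ then forces $S_{i+2}=f(S_{i+1})=f(S_{i+k+1})=S_{i+k+2}$, so $S_{i+2}$ is periodic (of period dividing $k$) and in particular sits inside an Inventory Loop.

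First I would read each of the $k+1$ arrows in the assumed chain through Lemma 7.2: since every arrow is labelled $g_\text{naive}$ rather than a proper Deteriorate, the ``only if'' half of that lemma supplies, for $j=i,i+1,\ldots,i+k$, both
$$[R_j+\{m_j\}]\subseteq S_j\qquad\text{and}\qquad m_{j+1}=\text{mult}_{S_{j+1}}(1).$$

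The hard part will be upgrading $[R_j+\{m_j\}]\subseteq S_j$ to $[\mu(S_j)]\subseteq S_j$, which boils down to securing $1\in S_i$. The $g_\text{naive}$ conditions alone still permit $1$ to make its first appearance at step $i\to i+1$, and such an appearance would shift the count of $1$'s in $S_{i+1}$ by exactly one relative to $S_{i+k+1}$, wrecking the eventual equality $m_{i+1}=m_{i+k+1}$. Luckily I can recycle the escape used at the end of the proof of Theorem 7.3: if $1\notin S_i$, then Lemma 4.2 combined with $i\ge 2$ forces $\mu(S_{i-1})=*\{2\}$, and a direct expansion of $\mu([S_{i-1}]+*\{2\})$ then yields $R_i=\emptyset$, contradicting the hypothesis $R_i\ne\emptyset$. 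With $1\in S_i$ in hand, $[\mu(S_j)]\subseteq S_j$ follows, and inductively $[S_i]=[S_{i+1}]=\cdots=[S_{i+k+1}]$; call this common cardinality $n$.

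From here the conclusion is routine bookkeeping. Writing $\mu(S_j)=t_j\{1\}+R_j+\{m_j\}$, the constancy $|[S_j]|=n$ gives $t_j=n-|R_j|-1$, and then $1\in S_j$ together with the Lemma 7.2 identity upgrades to $m_{j+1}=1+t_j=n-|R_j|$. The cycle hypothesis $R_{i+k}=R_i$ therefore delivers $m_{i+1}=m_{i+k+1}$, while $R_{i+1}=R_{i+k+1}$ delivers $t_{i+1}=t_{i+k+1}$; combined these force $\mu(S_{i+1})=\mu(S_{i+k+1})$. Pairing with $[S_{i+1}]=[S_{i+k+1}]$ yields $S_{i+1}=S_{i+k+1}$ and closes the plan.
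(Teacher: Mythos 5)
Your argument tracks the paper's own proof almost step for step: the same escape via Lemma 4.2 to secure $1\in S_i$ (ruling out $\mu(S_{i-1})=*\{2\}$ using $R_i\ne\emptyset$), the same use of the ``only if'' direction of Lemma 7.2 to get $[S_i]=[S_{i+1}]=\cdots=[S_{i+k+1}]$, and the same bookkeeping $|[S_j]|=\text{mult}_{\mu(S_j)}(1)+|R_j|+1$ and $m_{j+1}=\text{mult}_{\mu(S_j)}(1)+1$ to conclude $\mu(S_{i+1})=\mu(S_{i+k+1})$. Your direct computation $m_{j+1}=n-|R_j|$ is if anything a slightly tidier route to $m_{i+1}=m_{i+k+1}$ than the paper's second appeal to Lemma 7.2.

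There is, however, one step that fails: from $[S_{i+1}]=[S_{i+k+1}]$ and $\mu(S_{i+1})=\mu(S_{i+k+1})$ you conclude $S_{i+1}=S_{i+k+1}$. A multiset is not determined by its set of distinct elements together with its multiset of multiplicities, because that data does not record which element carries which multiplicity: $\{1,1,2\}$ and $\{1,2,2\}$ share $[\cdot]=\{1,2\}$ and $\mu(\cdot)=\{1,2\}$ yet differ. So $S_{i+1}=S_{i+k+1}$ is not justified --- and this is precisely why the lemma is stated about $S_{i+2}$ rather than $S_{i+1}$. The repair is immediate and is what the paper does: since $f(S)=[S]+\mu(S)$ depends only on the two invariants you have matched, you get $S_{i+2}=f(S_{i+1})=f(S_{i+k+1})=S_{i+k+2}$ directly, which is all the lemma claims. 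With that one inference replaced, your proof is correct and coincides with the paper's.
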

\begin{proof}
    The proof goes similarly to the second half of Theorem 7.4. In fact, we could have proven this first and used it as justification therein. However, it is often easier for readers to digest theorems built up by increasing generalization (rather than first proving the most general and going on to apply in particular cases). And the understanding of the reader more important than logical brevity.
    
    Note also that because $R_{i+k}=R_i$ \textit{and} $R_{i+k+1}=R_{i+1}$, we are assuming something stronger than the single reappearance of some $R_j$. 
    
    We may assume $1\in S_i$ since if not all elements in $\mu(S_{i-1})$ are $\ge2$. But if so, $\mu(S_{i-1})$ must contain \textit{only} $2$'s since otherwise $S_{i-1}$ would be larger in order than $S_i$ -- a contraction since by Lemma 4.2 we would have $i=1$ (but we assumed $i\ge 2$). And if $\mu(S_{i-1})=*\{2\}$, we would necessarily also have $R_i=\emptyset$ -- contradicting the lemma assumption.
    
    Given then $1\in S_i$, Lemma 7.2 ensures
    $$[S_{i}]=[S_{i+1}]=...=[S_{i+k+1}].$$
    Noting in general $|[S_j]|=|\mu(S_j)|=\text{mult}_{\mu(S_j)}(1)+|R_j|+1$ the former implies
    $$\text{mult}_{\mu(S_i)}(1)=\text{mult}_{\mu(S_{i+k})}(1)\quad\text{and}\quad \text{mult}_{\mu(S_{i+1})}(1)=\text{mult}_{\mu(S_{i+k+1})}(1).$$ 
    The first of these implies also $\text{mult}_{S_{i+1}}(1)=\text{mult}_{S_{i+k+1}}(1)$ since in general if $1\in S_j$ then $\text{mult}_{S_{j+1}}(1)=\text{mult}_{\mu(S_j)}(1)+1$. And again, Lemma 7.2 tells us $m_{i+1}=m_{i+k+1}$. Now since A) $R_{i+1}=R_{i+k+1}$, B) $\text{mult}_{\mu(S_{i+1})}(1)=\text{mult}_{\mu(S_{i+k+1})}(1)$, and C) $m_{i+1}=m_{i+k+1}$, we may conclude $\mu(S_{i+1})=\mu(S_{i+k+1})$ which, paired with $[S_{i+1}]=[S_{i+k+1}]$, implies $S_{i+2}=S_{i+k+2}$.
\end{proof}

In Figure 8.1 and Lemma 8.1 we assumed our Sequence had Matured. This includes the assumption $|S_i|\ge 16$. But some Loops occur with less than $16$ elements and our bounds at present don't apply to them. This final lemma amends their case.

\begin{lemma}
    Suppose $|S_i|<16$ and $i\ge 2$. Then for any $k>i$ such that $S_k$ is not member of any Inventory Loop and $|S_k|<16$ we may presume $k<i+2l+22$ where $l\le |[S_k]-[S_i]|$ counts appearances of new elements.
\end{lemma}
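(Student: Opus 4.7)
The approach is to reduce this immature-regime statement to the completely enumerated $g_n$-dynamics of Section~5.

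Step one, I would apply Lemma~4.2 (Children are at least as large in order as Parents past the first Generation) together with the hypotheses $i\ge 2$ and $|S_i|,|S_k|<16$ to squeeze $|S_j|<16$ for every $i\le j\le k$, and hence $|[S_j]|<8$ throughout the interval. This anchors the whole analysis in the small-$n$ regime already fully understood.

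Step two, I would partition $\{i,i+1,\ldots,k\}$ into maximal \emph{runs} on which the Noun-set $[S_j]$ is constant. Since each transition between consecutive runs is caused by at least one new element entering $[S_{j+1}]\setminus[S_j]$, there are at most $l+1$ runs in total. Within a single run whose common Nouns are $R$, constancy of $[S_j]$ forces $[\mu(S_j)]\subseteq R$, and then the calculation from Lemma~5.1 gives $\mu(S_{j+1})=g_n(\mu(S_j))$ for $n=|R|\le 7$. Moreover the $\mu(S_j)$ along the run must all be distinct, because any repetition --- with $[S_j]$ constant --- would immediately yield an $f$-cycle and put $S_k$ in a Loop, contradicting the hypothesis. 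The $g_n$-dynamics on $T_n^*$ for $n\le 7$ is completely mapped out in Figure~5.1, so each run embeds as a simple path in a small finite graph and its length is bounded by a small absolute constant $L$ that can be read directly off the figure.

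Step three, summing over the runs gives a crude bound $k-i\le (l+1)L$. To reach the sharper stated bound $k-i<2l+22$ I would refine the accounting by distinguishing runs whose exit absorbs several new elements at once (contributing multiple units of $l$ against a single run boundary) from those exiting on a single new element, and by a short finite-state computer enumeration over the (few dozen) immature configurations with $|S_j|<16$ which certifies the explicit constants. The main obstacle is precisely this last combinatorial bookkeeping and the explicit verification of the numbers $2$ and $22$; no new mathematical ingredients are needed beyond Lemmas~4.2 and~5.1 together with direct inspection of Figure~5.1.
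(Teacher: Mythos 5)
Your overall strategy is the paper's: reduce to the $g_n$-dynamics on $T_n^*$ for $n\le 7$, observe that while the Noun-set $[S_j]$ is constant the Adjectives evolve by $g_n$ and cannot repeat without closing a Loop, and bound each such stretch by the longest path in Figure~5.1. The decomposition into maximal runs of constant $[S_j]$ is exactly the paper's picture of the Adjective sequence ``swimming around'' one $T_n^*$ until a new element knocks it upward (with Lemma~5.2 supplying the one off-lattice step at each transition, a point you should make explicit since the first term of a run has $n$ elements summing to strictly less than $2n$ and so is not yet in $T_n^*$).

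The gap is in your final accounting, and your proposed repair does not close it. Your bound $k-i\le (l+1)L$ with $L=7$ gives $7l+7$, which exceeds $2l+22$ as soon as $l\ge 4$ (and $l$ can be as large as $6$ here), so the crude estimate does not prove the lemma; the refinement you sketch --- crediting several new elements to one run boundary, plus an unspecified enumeration --- only reduces the \emph{number} of runs, not the per-run cost, and so cannot recover the constant. The missing structural fact is that each new-element event strictly increases $n=|[S_j]|=|\mu(S_j)|$, so successive runs occupy \emph{strictly increasing} levels $n\le 7$ and are therefore bounded by the \emph{level-specific} longest path lengths $1,2,3,4,5,7,7$ read off Figure~5.1 (Figure~8.4 in the paper), each incremented by one for the border crossing. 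Summing over all seven levels gives an absolute total of $36$ iterations, and the paper then merely re-parametrizes $36$ as $2l+22$ by charging $2$ iterations to each of the at most seven new appearances. Without the monotone-level observation the product bound $(l+1)L$ is the best you get, and it is too weak.
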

\begin{proof}
    Let $n=|\mu(S_{i-1})|$. Since $|S_i|=2n$ we know $n<8$. Our goal then is relating $\mu(S_{i-1})$ to Figure 5.1 which gives the action of $g_\text{naive}$ on $T_n^*$ -- the set of multisets of order $n$ whose elements sum to $2n$. 
    
    There are two possibilities. If no new elements appeared in $S_{i-1}$ then $\mu(S_{i-1}\in T_n^*$ since in this case $[S_{i-1}]=[S_{i-2}]$ and we may therefore say
    $$\sum_{m\in \mu(S_{i-1})}m=|S_{i-1}|=|[S_{i-2}]+\mu(S_{i-2})|=2|[S_{i-2}]|=2|[S_{i-1}]|=2n.$$
    But if any new elements \textit{did} appear then $\mu(S_{i-1})$ will (by definition) still have $n$ elements but summing instead to strictly \textit{less than} $2n$.
    
    In total, this means the sequence $\{\mu(S_j)\}_{j=i-1}^\infty$ swims around $T_n^*$ until either some set of Adjectives reappears (implying an Inventory Loop has been entered) or some new element appears and the sequence eventually jumps up into $T_{n+1}^*$ (or into $T_{n+2}^*, T_{n+3}^*, $...). Lemma 5.2 assures us as well the sequence spends at most one iteration outside its source and destination $T_m^*$ at every subsequent arrival of new elements. Thus we can bound the maximum iterations occurring before either $|\mu(S_j)|$ is at least $8$ (and therefore $|S_{j+1}|>16$) or a Loop begins. To do this let's redraw Figure 5.1 marking out the longest path available in each $T_n^*$:
    \begin{center}
        \includegraphics[scale=0.25]{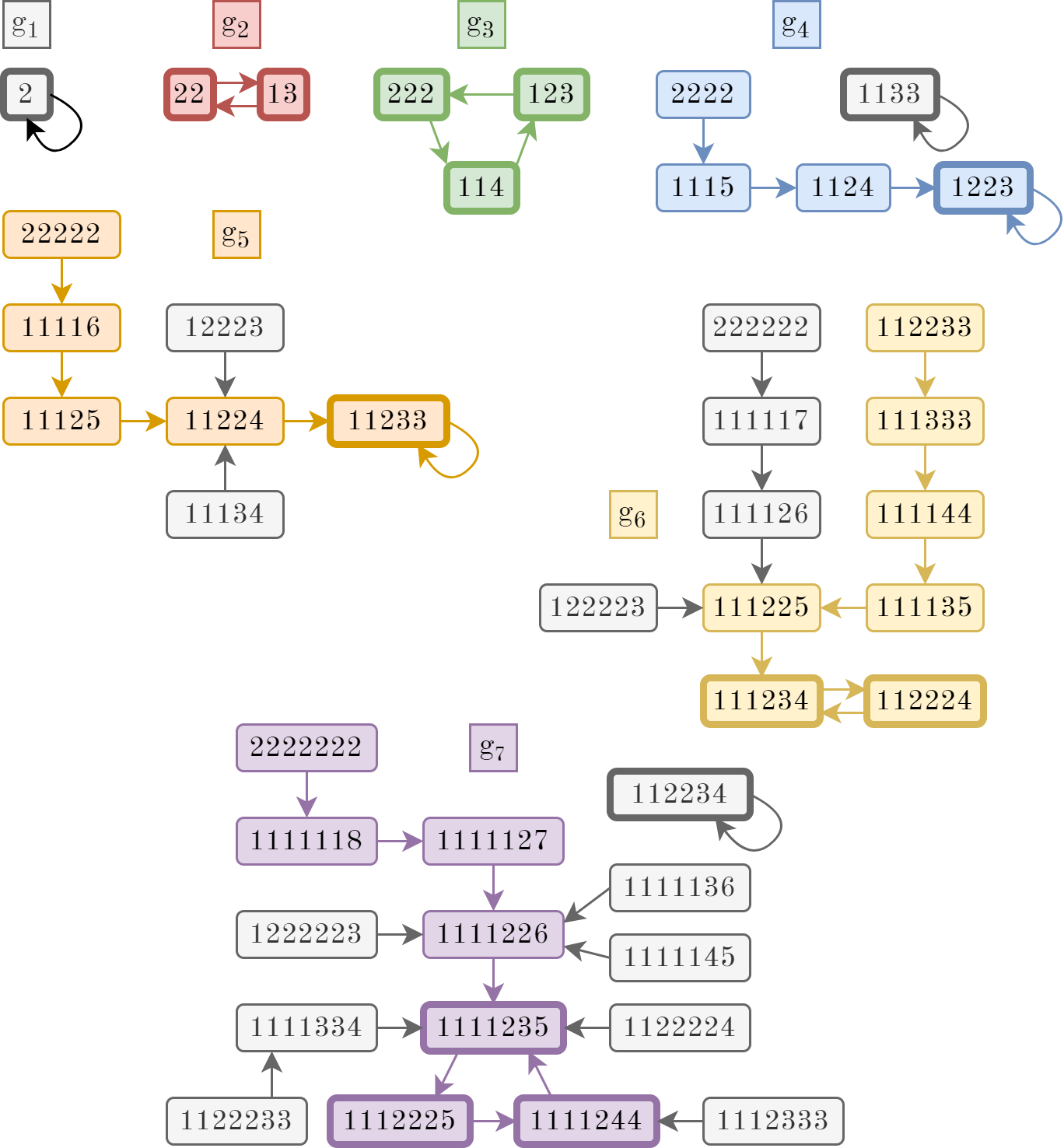}
        
        \textbf{Figure 8.4}
        
        The longest paths are left in color. Remaining nodes are colorless.
    \end{center}
    
    And we see the path lengths for $n=1,..,7$ are length $1,2,3,4,5,7,7$ respectively. Incrementing each (for the border crossing between $T_n^*$'s) and taking sum we get $2+3+4+6+6+8+8=36$ iterations at most. But instead of leaving a grand total of $36$ we instead choose a slightly different representation, attributing $2$ iterations to each new element (hence ``$2l$" in the lemma statement) and leaving $36-14=22$ as constant. This modified representation plays more nicely with the main theorem.
\end{proof}

\begin{theorem}
    The pre-period of any Inventory Sequence with initial value $S_0\in\mathbb{N}_+^*$ is at most
    $$2(\max S_1 - |[S_1]|)+\log_{\sqrt{2}}\log_{5/4}\frac{\max(c_0,10)}{8}+62$$
    where $c_0=|[\mu(S_0)]|$.
\end{theorem}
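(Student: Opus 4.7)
The plan is to combine Theorem 7.3 with a careful accounting of the trajectory in Figure 8.1 and Lemma 8.1's fixed-budget edges. First I would invoke Theorem 7.3 on $S_0$: within $k_1=13+\log_{\sqrt{2}}\log_{5/4}\frac{\max(c_0,10)}{8}$ iterations either $S_{k_1}$ is a fixed point of $f$ (pre-period at most $k_1$, and we are done) or the Core Adjectives $R_{k_1}$ lie in the mature list $\{\emptyset,2,3,4,22,23,24,222\}$. In the first case the conclusion is immediate, so assume the second. If the Sequence is still immature at $S_{k_1}$ because $|S_{k_1}|<16$, I would apply Lemma 8.3 to buy at most $2l+22$ additional iterations (with $l$ counting new element appearances during that interlude) before either a Loop begins or $|S_j|\ge 16$, at which point the Sequence is truly Matured.

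Once Matured, the trajectory $\{R_i\}$ follows the arrows of Figure 8.1, and I would charge every iteration to exactly one of three budgets. First, every orange, green, or dashed-green arrow requires a new element to appear for the first time in the entire Sequence; since no element can first appear twice, such an arrow costs at most two iterations per first-time appearance (the extra one absorbs the shift that occurs when the new appearance is $m_i$ rather than an interior Adjective, as observed in the Backtracking analysis of Lemma 8.1). Second, Lemma 8.1's table supplies a finite budget for specific edges that are not globally charged to first appearances. Third, blue arrows (no new appearance) are controlled by Lemma 8.2: between successive first-appearance events, any consecutive pair $(R_i,R_{i+1})$ revisited along blue arrows forces an Inventory Loop whenever $R_i\neq\emptyset$ (the finitely many edge cases emerging from $\emptyset$ being absorbed into Lemma 8.1's bookkeeping). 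Hence between two first-appearance events the trajectory traverses at most a constant number of distinct blue edges before Looping.

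Now I would tally. Corollary 4.1.1 together with Theorem 4.1 gives $\max S_i\le \max S_1+7$ for all $i\ge 1$, so the total number of first-time element appearances across the entire Lineage is at most $\max S_1-|[S_1]|+7$. Charging at most two iterations per appearance yields the $2(\max S_1-|[S_1]|)$ linear term and contributes $14$ to the constant. Combining this with $k_1$'s additive constant $13$, the $22$ from the Lemma 8.3 interlude, the finite sum of Lemma 8.1's ``Max Occurrences'' column, and the blue-traversal constant bounding the Figure 8.1 graph diameter, one arrives at the claimed bound, provided these absolute constants total exactly $62$.

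The main obstacle will be pinning down each absolute constant so that collectively they sum to exactly $62$ without double-counting between the pre-Maturity Lemma 8.3 interlude (whose new appearances are already charged by the $2l$ term there) and the Matured-phase charging by first appearances. A secondary delicate point is justifying the blue-arrow claim rigorously: one must verify in Figure 8.1 that from any non-$\emptyset$ mature Core Adjective set, repeated blue-edge traversal is impossible without reentering a previously visited pair $(R_i,R_{i+1})$, at which moment Lemma 8.2 activates and terminates the pre-period. With these two accounting issues handled carefully, the three budgets paste together to give the advertised pre-period bound.
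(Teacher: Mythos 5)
Your overall architecture matches the paper's proof exactly: invoke Theorem 7.3 for the $\log\log$ phase, Lemma 8.3 for the small-order interlude, Figure 8.1 with Lemma 8.2 for the post-Maturity blue-arrow phase, Lemma 8.1 for the exceptional edges, and finally bound the number of new-element appearances $l$ by a Height bound. The difficulty is that the statement asserts a \emph{specific} constant, $62$, and your accounting as written does not close to it. Two concrete problems. First, you bound the Height by $\max S_1+7$ via Corollary 4.1.1; the paper instead uses Theorem 4.5, which says the maximum Height appears by $S_3$ for all but the finitely many listed exceptions, and checks that the worst exception ($S_0=\{1\}$) gives $\max S_i\le\max S_1+3$. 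Your $+7$ contributes $14$ to the constant where the paper's $+3$ contributes $6$; with $+7$ you cannot reach $62$ without tightening something else.

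Second, your itemized constants overshoot. You list $14$ (from the Height bound) $+\,13$ (Theorem 7.3) $+\,22$ (Lemma 8.3) plus the Lemma 8.1 budget plus a blue-traversal constant; even before the last two items this is $49$, and the Lemma 8.1 contribution the paper actually uses is the ``worst overture'' total of $1+4+12+2+4+2=25$ (the excess delay beyond the generic charge of $2$ per new appearance), not the ``Max Occurrences'' column. The paper's tally is $24$ (Lemma 8.3 applied from $i=2$, i.e. $2+22$, with the Theorem 7.3 constant $13$ absorbed because that phase runs concurrently and only its $\log\log$ term survives into the final bound) $+\,7$ (at most seven blue iterations after Maturity before Lemma 8.2 forces a Loop) $+\,25$ (worst overture) $=56$, and then $2\cdot 3=6$ from the Height bound gives $62$. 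So the gap is precisely the double-counting you flagged yourself: you must not add the Theorem 7.3 constant and the Lemma 8.3 constant as separate phases, and you must charge the exceptional edges by their excess delay over the uniform $2$-per-appearance charge rather than by their raw occurrence counts. Your instinct about the blue arrows (that a repeated consecutive pair $(R_i,R_{i+1})$ triggers Lemma 8.2, bounding each blue stretch by the structure of Figure 8.1) is the right one and is exactly how the paper gets the $7$.
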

\begin{proof}
    Here we total up the iterations from the relevant theorems and lemmas:
    \begin{enumerate}
        \item Theorem 7.3 says after 
        $$k=13+\log_{\sqrt{2}}\log_{5/4}\frac{\max(c_0,10)}{8}$$
        iterations from the initial value either $S_k$ is an Inventory Loop or the Core Adjectives, $R_k$, are in Integer Notation one of $2,3,4,22,23,24,222$ or is the emptyset $\emptyset$.
        \item Lemma 8.3 says after $k=2+22+2l=24+2l$ iterations from the initial value either $S_k$ is part of an Inventory Loop or $|S_k|\ge 16$ (where $l$ counts appearances of new elements -- we will bound it further down).
        \item The former two taken together imply after $$k=24+2l+\log_{\sqrt{2}}\log_{5/4}\frac{\max(c_0,10)}{8}$$
        iterations from the initial value either $S_k$ is part of an Inventory Loop or the sequence has Matured.
        \item Figure 8.1 and Lemma 8.2 taken together imply an Inventory Sequence will enter a Loop at most $7$ iterations after Maturity if no new elements appear.
        \item One can conclude from Figure 8.1 that each new element delays the Loop by at most two Generations unless it appears with $R_k=\{2,2,2\}$ or $R_k=\{2,4\}$.
        \item Lemma 8.1 tells us the former exceptional appearances occur at most a fixed amount. In fact, we can tabulate exactly how many iterations each might add:
        \begin{center}
            \begin{tabular}{c|c|c|c|c|c}
                 & No. new & Expected & Worst & Max. no. & Worst\\
                Edge & elements & delay & delay & occurrences & overture\\ \hline
                $24\rightarrow \emptyset$ & $3$ & $6$ & $7$ & $1$ & $1$ \\
                $24\rightarrow 2$ & $2$ & $4$ & $6$ & $2$ & $4$ \\
                $24\rightarrow 22$ & $1$ & $2$ & $5$ & $4$ & $12$ \\
                $222\rightarrow 3$ & $2$ & $4$ & $6$ & $1$ & $2$ \\
                $222\rightarrow 4$ & $1$ & $2$ & $6$ & $1$ & $4$ \\
                $222\rightarrow 23$ & $1$ & $2$ & $4$ & $1$ & $2$ \\
            \end{tabular}
        \end{center}
        Thus attributing a delay of $2$ iterations per new element gives bound off by no more than $1+4+12+2+4+2=25$ iterations. More careful analysis could reduce this number but, as there are a lot possible improvements to our bound's constant term, we will enumerate them all together in the next section.
        \item Adding the constant terms of items (3.), (4.), and (6.) gives $24+7+25=56$. Thus after 
        $$k=2l+\log_{\sqrt{2}}\log_{5/4}\frac{\max(c_0,10)}{8}+56$$
        iterations from the initial value, our Sequence has entered a Loop.
    \end{enumerate}
    
    Our last task is to get rid of the ``$l$". This is possible because the elements all Inventory Sequences are bounded from above. In particular, Theorem 5.4 tells us the largest element appears by $S_3$ (except for a handful of exceptions). In other words, $\max S_i=\max S_3$ for $i\ge 3$. Further, Corollary 4.1.1 states the Height of an Inventory Sequence at most increments. Thus $S\max S_3\le \max S_1+2$ and in most cases $\max S_i\le \max S_1 + 2$ for all $i\ge 0$. The worst of the exceptions is $S_0=\{1\}$ which gives $\max S_1=1$ and $\max S_8=\max S_i=4$ for $i\ge 8$. So in all cases $\max S_i\le \max S_1 + 3$. 
    
    Now $l$ counts new appearances from $S_2$ onward. So because the elements of the Sequence are bounded by $\max S_1+3$ we may conclude
    $$l\le \max S_1 + 3 - |[S_1]|.$$
    Substituting into our former bound we obtain a pre-period bound of
    $$2(\max S_1 +3- |[S_1]|)+\log_{\sqrt{2}}\log_{5/4}\frac{\max(c_0,10)}{8}+56$$
    $$=2(\max S_1 - |[S_1]|)+\log_{\sqrt{2}}\log_{5/4}\frac{\max(c_0,10)}{8}+62.$$
\end{proof}

\begin{corollary}
    The pre-period of any Inventory Sequence with $S_0\in\mathbb{N}_+^*$ is at most $2\max S_1+60.$
\end{corollary}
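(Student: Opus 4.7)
The plan is to reduce the corollary to a single numerical comparison. Setting the target bound $2\max S_1 + 60$ against Theorem 8.1's bound $2(\max S_1 - |[S_1]|) + \log_{\sqrt{2}}\log_{5/4}(\max(c_0,10)/8) + 62$, the $2\max S_1$ cancels, so it suffices to prove
$$\log_{\sqrt{2}}\log_{5/4}\frac{\max(c_0,10)}{8} + 2 \;\le\; 2\,|[S_1]|,$$
where $c_0 = |[\mu(S_0)]|$.

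The first step is to establish the chain $c_0 \le |[S_1]|$. By the definition of $\mu$ we have $|\mu(S_0)| = |[S_0]|$, so $c_0 = |[\mu(S_0)]| \le |\mu(S_0)| = |[S_0]|$; and since $S_1 = [S_0] + \mu(S_0)$ contains $[S_0]$ as a sub-multiset, $|[S_0]| \le |[S_1]|$. Together these give $c_0 \le |[S_0]| \le |[S_1]|$.

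With that in hand, I would split on the size of $c_0$. When $c_0 \le 10$ the log-log term equals $\log_{\sqrt{2}}\log_{5/4}(10/8) = \log_{\sqrt{2}}(1) = 0$, so the required inequality collapses to $2 \le 2\,|[S_1]|$, which holds for any nonempty $S_0$ (and when $S_0 = \{1\}$, i.e.\ $|[S_1]| = 1$, it is saturated but still valid). When $c_0 \ge 11$ we have $|[S_1]| \ge c_0 \ge 11$, so the target reduces to $\log_{\sqrt{2}}\log_{5/4}(c_0/8) \le 2c_0 - 2$ -- a routine inequality reflecting that iterated logarithms grow much more slowly than linear functions, verifiable at the base case $c_0 = 11$ and then extended by monotonicity of both sides.

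No serious obstacle is expected here: the corollary is essentially repackaging Theorem 8.1, trading the $-|[S_1]|$ savings in exchange for the much smaller $\log\log$ term and absorbing the two-unit difference into the constant ($60$ vs.\ $62$). The only mild care needed is bookkeeping around the very smallest starting values to confirm the inequality is saturated but not violated; everything else is just slow-growth estimates.
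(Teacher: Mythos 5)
Your proposal is correct and matches the paper's own argument: both rest on the key observation $c_0=|[\mu(S_0)]|\le|[S_1]|$ and then trade the $-2|[S_1]|$ savings against the $\log\log$ term, with the paper phrasing this as a marginal comparison (each increment of $c_0$ adds at most $\log_{\sqrt{2}}\log_{5/4}\frac{11}{8}\approx 1.03$ to the $\log\log$ term while subtracting $2$ elsewhere, so the worst case is $|[S_1]|=1$) and you phrasing it as an equivalent, slightly more explicit case split on $c_0\le 10$ versus $c_0\ge 11$. No gap; the approaches are essentially the same.
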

\begin{proof}
    The recurrence $S_1=\mu(S_0)+[S_0]$ tells us $c_0=|[\mu(S_0)]|\le |[S_1]|$. This means the larger we make $c_0$, the larger $|[S_1]|$ becomes too. In particular, an increment to $c_0$ increases the $\log\log$ term by at most
    $$\log_{\sqrt{2}}\log_{5/4}\frac{11}{8}=1.026...$$
    but reduces the $2(\max S_1 - |[S_1]|)$ term by $2$. In other words, the bound is largest when $|[S_1]|$ is smallest.
    
    Picking then $|[S_1]|=1$ gives us a more symbolically compact bound:
    $$2(\max S_1 - 1)+\log_{\sqrt{2}}\log_{5/4}\frac{\max(1,10)}{8}+62=2\max S_1 -2 + 0 + 62 = 2\max S_1 + 60.$$
\end{proof}

\section{On Improvements}
It isn't clear the pre-period bound is best formed as $a\max S_1+b$. And ours ($a=2$ and $b=60$) isn't necessarily the tightest. However, Bronstein and Fraenkel's original request was for \textit{meaningful} -- not \textit{exact} -- pre-period bounds and we feel the request is answered. Before closing, the authors would like to speculate about improvements to our bound.

Firstly, $b=60$ is easily improvable and the authors took no pains to do so when the mathematical ecosystem offered brevity in exchange. The following shortcuts each admit improvements:
\begin{enumerate}
    \item Lemma 6.2 might be adapted to our particular subspecies of multisets yielding better bounds -- or more likely, a set of bounds for different cases (e.g. when new elements have appeared and when not). The result would carry into Lemmas 6.3 and 6.4 and become a lower constant term and/or lower logarithmic bases in Theorem 5.4. A lowered constant would also reduce the multisets listed in Corollary 7.1.1 and would then become a lower constant in Theorem 7.3.
    \item Lemma 8.1 was completed as if all $13$ edges could occur together at their maximum occurrence. However, most of them are exclusive. For example,  $2\rightarrow\emptyset,\ 23\rightarrow\emptyset,$ and $24\rightarrow\emptyset$ each require the new appearance of $2$ and thus only one can appear. Similarly if $24\rightarrow 22$, this could probably be shown to occur at most $2$ or $3$ times (instead of $4$). Thus the constant ``$25$" from part (6.) of Theorem 8.3's proof can probably be shrunk to $10$ or so.
    \item Lemma 8.2 requires the repetition of a consecutive Core Adjective pair. However, if adapted for our case, it is possible $222\rightarrow 24\rightarrow 222$ or $24\rightarrow 222\rightarrow 24$ (i.e. the repetition of a single Core Adjective multiset) would be enough to force a Loop. This would reduce the delay constant from $7$ to $6$ as well as reduce the overture of some edges in part (6.) of Theorem 8.4's proof. 
    \item Lemma 8.3 was completed as if the longest path of all seven $T_n^*$'s might occur together. However, the length of the path through $T_{n-1}^*$ has consequences on the path taken through $T_n^*$. For example, if the longest path (length $5$) is taken though $T_5^*$ then the path through $T_6^*$ will be at most length $3$. The constant ``$22$" can probably be reduced to $14$.
\end{enumerate}

But these improvements still leave us with something $2\max S_1+b$ in the end (around $b=30$ probably). One might then wonder can $a=2$ be decreased?

No. Not in the general case $S_0\in\mathbb{N}_+^*$ at least. The orange edges in Figure 8.1 can in fact be taken indefinitely. For example, $S_0=4\{4,k,k+1\}$ takes $22\rightarrow 3$ repeatedly ($k-6$ times exactly):
    \begin{center}
        \includegraphics[scale=0.28]{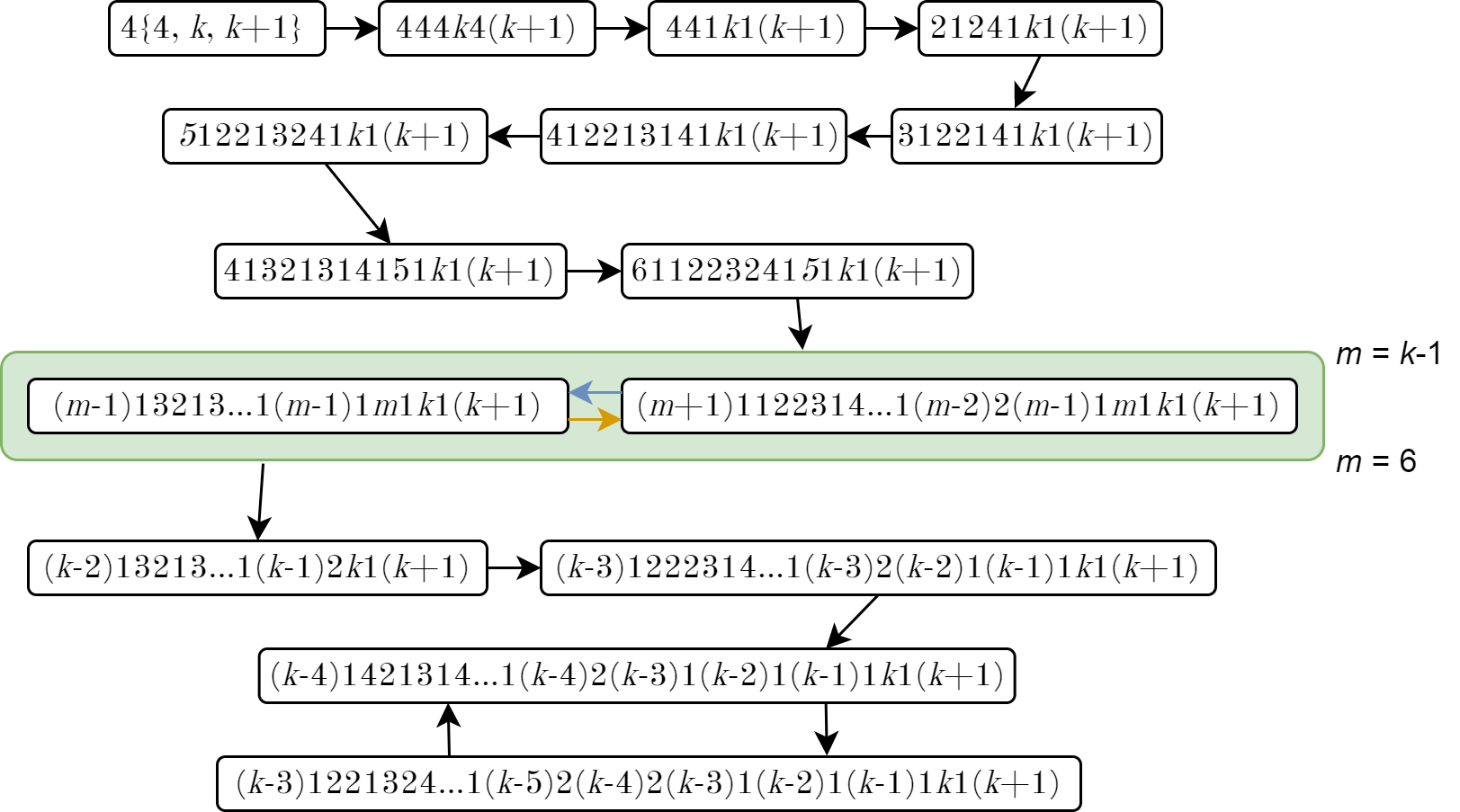}
        
        \textbf{Figure 9.1}
        
        The starting value is represented as a multiset but for readability remaining nodes are written in Integer Notation.
    \end{center}

The trick works because the family bounces around the green area repeatedly -- $O(k)$ times in fact. The authors worked out similar staring values for three other orange edges. Two of them ($22\rightarrow 3$ and $23\rightarrow 22$) give rise to infinite families of starting values for which the $a=2$ is sharp. Here are the edges marked out in the Core Adjectives graph:
\begin{center}
    \includegraphics[scale=0.35]{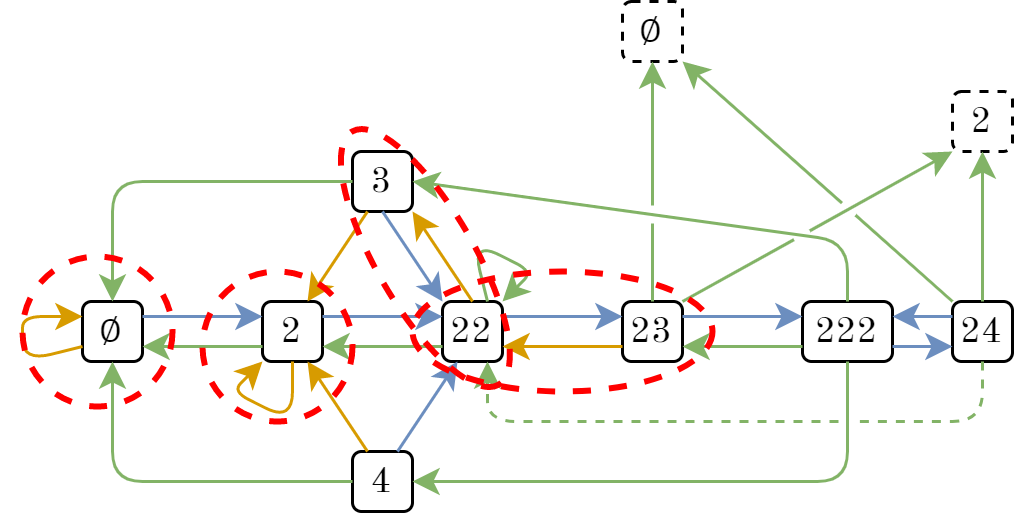}
    
    \textbf{Figure 9.2}
\end{center}
And here are their families given explicitly:
\begin{center}
    \begin{tabular}{c|c|c|c|c}
        $S_0$ & $k\ge$ & Edge & $\max S_1$ & Loop at \\ \hline
        $4\{4,k,k+1\}$ & $7$ & $22\rightarrow3$ & $k+1$ & $2k-2=2\max S_1-4$ \\
        $3\{3\}+2\{k,k+1,k+2\}$ & $5$ & $23\rightarrow22$ & $k+2$ & $2k-2=2\max S_1 -6$ \\
        $\{2,2,k,k,k+1,k+2\}$ & $5$ & $2\rightarrow2$ & $k+2$ & $k+2=\max S_1$ \\
        $k\{k+1\}$ & $7$ & $\emptyset\rightarrow\emptyset$ & $k$ & $k+4=\max S_1+4$ \\
    \end{tabular}
\end{center}
    
It is left to the reader to work out families for $3\rightarrow 2$ and $4\rightarrow 2$ (if they exist!).

\section{Replacing $\mathbb{N}_+$ with $\mathbb{N},\mathbb{Z}$}

So far we have assumed our multisets are of strictly positive integers. We'd like to know if our results hold in other regions -- say $\mathbb{N}$ or $\mathbb{Z}$. It turns out by simply allowing $0$ and $-1$ (or really any two values outside the image of $\mu$) we lose even "Ultimate Cyclicity". That is to say, one can find $S_0\in\mathbb{N}\cup \{-1\}^*$ which never enters a Loop. For example:
\begin{center}
    \includegraphics[scale=0.25]{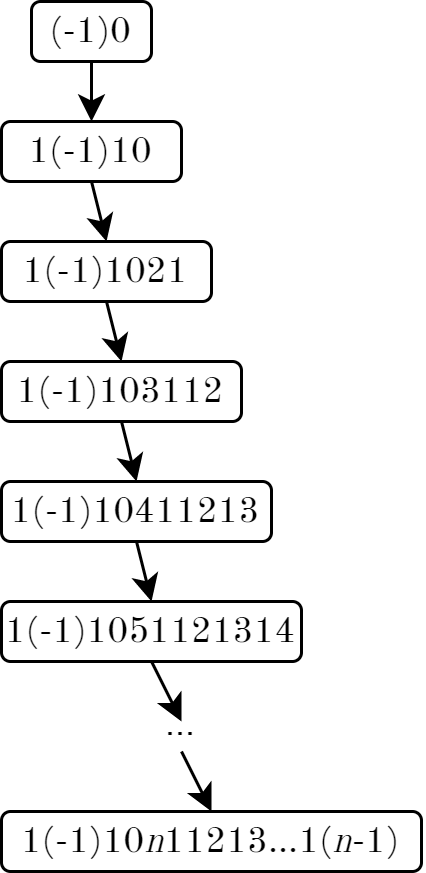}
    
    \textbf{Figure 10.1}
\end{center}

Thus $\mathbb{N}=\mathbb{N}_+\cup\{0\}$ is something of a boundary case. As far as the authors can see, the results of Sections 5 through 9 run isomorphically substituting ``$\mathbb{N}$" for ``$\mathbb{N}_+$". Section 4 is probably affected only in that the list of exceptions for Theorem 4.5 becomes longer. This is because the proof of Lemma 4.1 relied on the fact that $\max S\ge |[S]|$ for $S\in\mathbb{N}_+^*$. But in $\mathbb{N}$ we might have say $R=\{0,1,2,3,4\}$. The difficulty can be amended with some extra work.

But the authors have not pursued this variation or any others as the project has already grown larger than the authors should have allowed. Further analysis is therefore left undone and the paper will close with speculation about an interesting (but yet unstudied) reformulation and the variations it offers.

\section{Multisets as Functions}
Every multiset \enquote{$S$} corresponds to a multiplicity function \enquote{$\text{mult}_S$}. Some fun variations are easier to find (and rigorously define) by treating the Inventory Game as an iteration of functions,
$$\text{mult}_{S_0}\rightarrow\text{mult}_{S_0}\rightarrow \text{mult}_{S_0}\rightarrow ...,$$
instead of an iteration of multisets 
$$S_0\rightarrow S_1\rightarrow S_2\rightarrow ...$$
It will be a bit of work to redefine things, but the authors found the results worth the trouble.

How then do we translate our recurrence \enquote{$S_{i+1}=\mu(S_i)+[S_i]$} into the language of functions? The task can be broken down to redefining 1) the multiset-of-multiplicities function \enquote{$\mu$}, 2) the multiset-to-set function \enquote{$[\cdot]$}, and 3) multiset addition \enquote{$+$}.

Taking the first point first, what is $\text{mult}_{\mu(S)}(x)$ for a particular $x$? Well -- $x$ appears in $\mu(S)$ once for however many elements have a multiplicity of $x$ in $S$. In other words, $\text{mult}_{\mu(S)}(x)$ counts how many elements the function $\text{mult}_{S}$ sends to $x$. This value will come up a lot so we should make some notation for it (it's cluttered to express it as summation).

For any function $\sigma:X\rightarrow Y$ let \enquote{$\sigma^{-1}(x)$} mean \enquote{the set of elements sent to $y$ by $\sigma$}. In other words,
$$\sigma^{-1}(y)=\{x\in X:\sigma(x)=y\}.$$
As an equation we then have
$$\text{mult}_{\mu(S)}(x)=|\text{mult}_S^{-1}(x)|.$$

But this definition isn't quite right still. There is an exception. Zero. There are infinitely many elements in any finite multiset $S$ occurring zero times. That is, there are infinitely many elements $\text{mult}_{\mu(S)}$ sends to zero. But we exclude this. We do not, for instance, say  $\{1,3,1,8\}$ has zero $0$'s, two $1$'s, zero $2$'s, one $3$, zero $4$'s, zero $5$'s, and so on... The zeros are left out. Our equation therefore requires amendment:
$$\text{mult}_{\mu(S)}(x)=\begin{cases}|\text{mult}_S^{-1}(x)| & \text{if }x>0 \\ 0 & \text{if }x=0\end{cases}.$$

The function $\text{mult}_{[S]}$ is easier to work through. The value of $\text{mult}_{[S]}(x)$ is $1$ if $x$ appears in $S$ and is $0$ otherwise. It's therefore something of an indicator function. As an equation,
$$\text{mult}_{[S]}(x)=\begin{cases}1 & \text{if }\text{mult}_{S}(x)>0 \\ 0 & \text{if }\text{mult}_{S}(x)=0\end{cases}.$$

Lastly, addition of functions is defined simply by adding values element-wise since for any two multisets $R,S$
$$\text{mult}_{R+S}(x)=\text{mult}_R(x)+\text{mult}_S(x).$$

All together we have
$$\text{mult}_{S_{i+1}}(x)=\begin{cases}|\text{mult}_{S_i}^{-1}(x)| & \text{if }x>0 \\ 0 & \text{if }x=0\end{cases} \quad+\quad \begin{cases}1 & \text{if }\text{mult}_{S_i}(x)>0 \\ 0 & \text{if }\text{mult}_{S_i}(x)=0\end{cases}.$$
And $\text{mult}_{S_{i+1}}$ has been defined totally in terms of $\text{mult}_{S_{i}}$. To make reading easier from here on (and to clear multiset conceptions out of our minds) the function \enquote{$\text{mult}_{S_i}$} will simply be called \enquote{$\sigma_i$}. Thus the Inventory Game really means the sequence $(\sigma_0, \sigma_1,\sigma_2,...)$ for some $\sigma_0:\mathbb{N}\rightarrow\mathbb{N}$ where 
$$\sigma_{i+1}(x)=\begin{cases}|\sigma_i^{-1}(x)| & \text{if }x>0 \\ 0 & \text{if }x=0\end{cases} \quad+\quad \begin{cases}1 & \text{if }\sigma_i(x)>0 \\ 0 & \text{if }\sigma_i(x)=0\end{cases}.$$

Our previous results then tell us that if $\sigma_0$ sends only a finite portion of $\mathbb{N}$ to non-zero values then the sequence $\{\sigma_i\}_{i=0}^\infty$ enters a cycle of length $1,2,$ or $3$ in $O(M)$ time where $M=\max S_1=\max \mathbb{N}/\sigma_1^{-1}(0)$. 

Before going on to exotic variations, it may be good to work an example. Let's take the classic case $S_0=\{1\}$ with the corresponding function
$$\sigma_0(x)=\text{mult}_{S_0}(x)=\begin{cases}1&\text{if }x=1\\0&\text{oth.}\end{cases}.$$

Because case equations are large, difficult to read (and to code), and are usually ugly, we will use an alternative representation. The statement \enquote{$\sigma_0:1\rightarrow1,*\rightarrow 0$} will mean \enquote{$\sigma_0$, the function sending $1$ to $1$ and everything else to $0$}. Thus our iteration may be visualized in $3$ ways -- as multisets, as function descriptions, and as function compositions:
\begin{center}
    \includegraphics[scale=0.25]{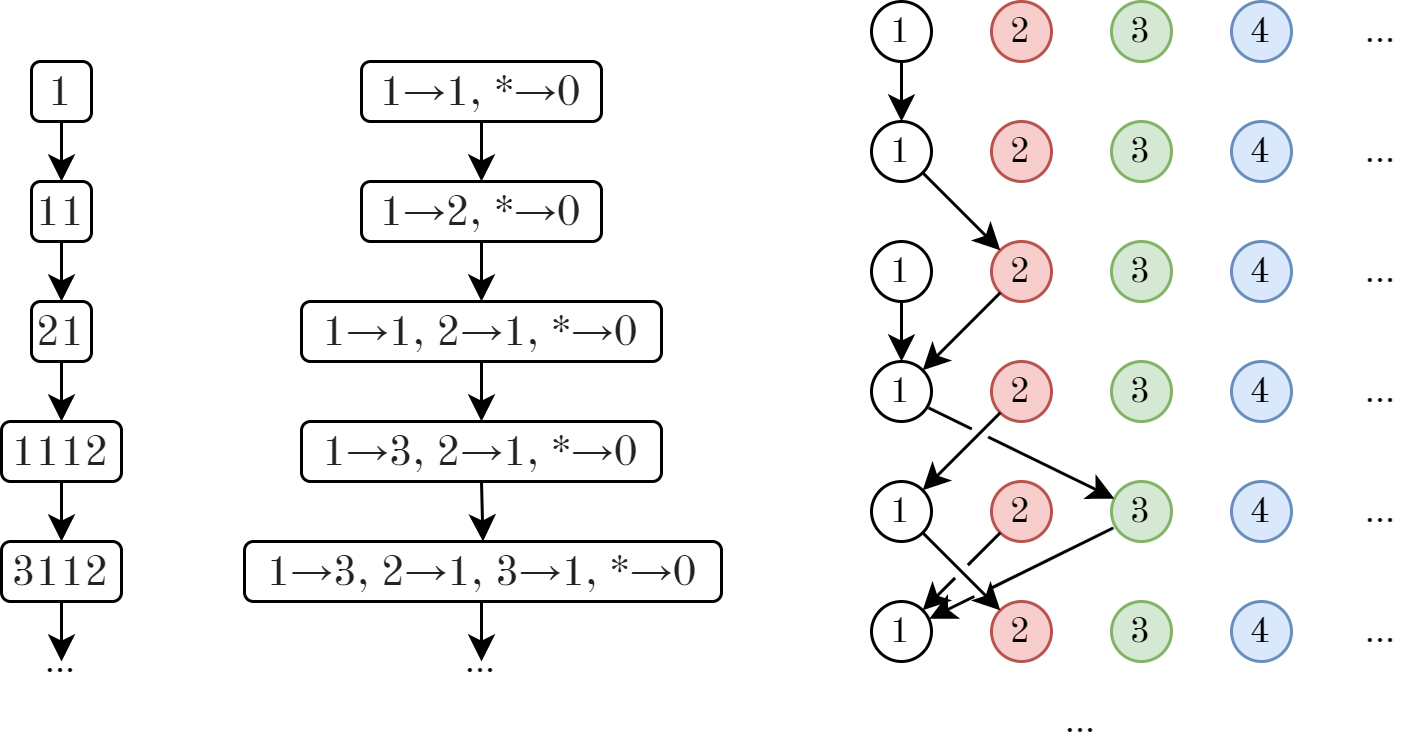}
    
    \textbf{Figure 11.1}
\end{center}
Of course, these redefinitions and revisualizations are only like setting the table. Now we are ready to eat.

It was said in the first section some mathematicians have played the Inventory Game with infinitely long starting values [1,2,3]. In our current language, we would say they chose $\sigma_0$ sending infinitely many values to a non-zero image. But to keep everything well-defined, they had to make sure infinitely many values were not sent to any particular $x>0$ since then $|\sigma^{-1}(x)|$ would be undefined. But there \textit{are} perfectly reasonable ways to define $|X|$ when the set $X$ is infinitely large. And doing so gives us transfinite variations of the Inventory Game.

In the most simple case, we take $\hat{\mathbb{N}}=\mathbb{N}\cup \{\infty\}$ instead of $\mathbb{N}$ and say $|\sigma^{-1}(x)|=\infty$ whenever $\sigma$ sends infinitely many elements to $x$ (to be completely rigorous we should also specify that $\infty>0$ and $\infty+1=\infty$). For example, if we start with $S_0=\mathbb{N}_+$ (or equivalently with $\sigma_0:0\rightarrow 0,*\rightarrow 1$) then we get the sequence on the left:
\begin{center}
    \includegraphics[scale=0.25]{"TransfiniteVariations".png}
    
    \textbf{Figure 11.2}
\end{center}
And a Loop occurs at $(\sigma_8=\sigma_{10}: 1\rightarrow \infty, 2\rightarrow 2, 4\rightarrow 2, \infty\rightarrow 2, *\rightarrow 1)$.

There are other more interesting Transfinite variations (e.g. the one on the right). So to keep from confusing ourselves, let's call the version on the left STIG for Simplest Transfinite Inventory Game. The version we're about to define (the one on the right) will be called TOIG for Transifnite Ordinals Inventory Game since it is played with surreal numbers (hence using \enquote{$\omega$} instead of \enquote{$\infty$}).

In STIG, we said \enquote{$\infty=\infty+1$}. TOIG is more or less the result of saying  instead \enquote{$\infty\not=\infty+1$}. Correspondingly in STIG, we said $|\sigma^{-1}(x)|=\infty$ if $\sigma$ sent any infinite portion of $\mathbb{N}$ to $x$. In TOIG however, we say $|\sigma^{-1}(x)=\omega$ if $\sigma$ sends all of $\mathbb{N}$ to $x$. And if $\sigma$ sends all but a finite portion of $\mathbb{N}$ to $x$, say $\mathbb{N}/P$, then we say $|\sigma^{-1}(x)|=\omega-|P|$. In the round of TOIG in Figure 11.2 for example, $\sigma_1$ sends $\mathbb{N}/\{0,1\}$ to $1$. Accordingly, $|\sigma_1^{-1}(1)|=\omega-2$ and $\sigma_2(1)=|\sigma_1^{-1}(1)|+1=\omega-1$. And again, if in addition to $\mathbb{N}/P$, $\sigma$ sends some finite set of transfinites, $Q$, to $x$ then we say
$$|\sigma^{-1}(x)|=\omega-|P|+|Q|.$$

But this variation has a couple of holes the authors have not seriously attempted to patch. Firstly, what happens when infinite transfinites are sent to $x$? And secondly, what happens when an infinite portion of $\mathbb{N}$ with infinite exceptions is sent to $x$? For instance of the latter, what if we start with $S_0=\{0,2,4,6,8,...\}$? (i.e. $\sigma_0^{-1}(1)=2\mathbb{N}$ -- the non-negative even numbers). 

The only hopeful solution occurring to the authors was to play over Conway's surreal numbers (as opposed to a more simple surordinal set). The first difficulty might then be resolved setting $\sigma^{-1}(x)$ to $2\omega$ or $\omega^2$ (or perhaps something more nuanced). The second difficulty, $\sigma_0^{-1}(1)=2\mathbb{N}$, might be also resolved setting $|\sigma_0^{-1}(1)|$ to $\frac{1}{2}\omega$. And in general if $X$ has density $d$ in $\mathbb{N}$ then we set $|X|=d\omega$. If the density is $0$ or $1$ then an interesting work-around might be found with $\epsilon=\frac{1}{\omega}$ (or the variation might play out consistently without any patch-work). The soil seems fertile here but the authors haven't made time to plant anything.

The multiset-as-function formulation has also some nice alterations worth mentioning. But it will need some surgery first -- it's not flexible enough currently. There are three procedures to be done:
\begin{enumerate}
    \item First we treated $\sigma_i$ as a map from $\mathbb{N}$ to $\mathbb{N}$, then as a map from $\hat{\mathbb{N}}$ to $\hat{\mathbb{N}}$, and thirdly as a map from the surreal numbers $\mathbb{S}$ to themselves. In general, $\sigma_i$ sends some set $G$ \enquote{into} itself. Really the only requirement of $G$ is an additive structure with identity -- in other words, to clearly define $\sigma_i:G\rightarrow G$, we need some $+:G^2\rightarrow G$ and $0\in G$ such that $0+x=x$ for any $x\in G$.
    \item When working out $\text{mult}_{\mu(S)}$ we had to make an exception for zero since we don't mention zero amounts of things in description (e.g. saying $13$ has \enquote{one $1$, zero $2$'s, one $3$, zero $4$'s, zero $5$'s, ...}). In spoken languages, zero is (usually) an assumed or insignificant Adjective. But what if we want to mention zero things explicitly? -- if we want to give zero some significance? Or alternatively, what if we want to treat other Adjectives besides zero insignificantly? Both of these can be done defining a set $I\subset G$ of \textit{Insignificant Adjectives} where so far we have used $I=\{0\}$.
    \item We said in the first section some people play the Inventory Game without Nouns. Their gameplay can be accommodated by adding a parameter $r\in G$ counting Noun-mentions. We played with $r=1$. Choosing $r=0$ gives the Nounless Variation and corresponds tot he map $S\rightarrow \mu(S)$ in the multiset formulation. More will be said about $r>1$ further on.
\end{enumerate}

Putting all these together, the recurrence becomes
$$\sigma_{i+1}(x) = \begin{cases}|\sigma_i^{-1}(x)|& \text{if } x\not\in I \\ 0 & \text{if }x\in I\end{cases} \quad+\quad \begin{cases}r & \text{if } \sigma_i(x)\not\in I\\ 0 & \text{if } \sigma_i(x)\in I\end{cases}.$$
The order operator $|\cdot|$ can really be any map sending subsets of $G$ to elements of $G$ (i.e. $|\cdot|:\mathcal{P}(G)\rightarrow G$).

A lot of variations can now be labeled by a $4$-tuple $(G, I, |\cdot|, r)$. Our main specimen of analysis has been $(\mathbb{N}, \{0\}, |\cdot|, 1)$. STIG is $(\mathbb{N}\cup \{\infty\}, \{0\}, |\cdot|, 1)$ where $|\cdot|$ sends infinite sets to $\infty$ and TOIG is $(\mathbb{S}, \{0\}, |\cdot|, 1)$ where $\mathbb{S}$ is Conway's surreal numbers and $|\cdot|$ we never defined exhaustively. Additionally, the decimal Nounless Variation is $(\mathbb{Z}/10\mathbb{Z}, \emptyset, |\cdot|, 0)$ where $|\cdot|$ sends sets of order $10$ to $0$. This variation has two Loops. There is a $1$-cycle corresponding to the self-descriptive number $6210001000$ and a $2$-cycle corresponding the mutually descriptive pair $7101001000$ and $6300000100$. The authors here resist the temptation to speculate about Nounless Transfinite Variations.

One wonders how variations behave when $I$ is neither $\emptyset$ nor $\{0\}$. Consider then $(\mathbb{N}, \mathbb{N}/\{1,2,3\}, |\cdot|, 1)$. It's a lot like the version we analyzed except a Noun is mentioned only if there are $1,2,$ or $3$ of it. For example, we would say $1133335255$ has \enquote{two $1$'s, one $2$, and three $5$'s}. The $3$'s are not mentioned. This variation produces Loops longer than $3$ elements:
$$1$$
$$11$$
$$21$$
$$1112$$
$$3112$$
$$211213$$
$$\mathbf{312213}$$
$$212223$$
$$1113$$
$$3113$$
$$2123$$
$$112213$$
$$\mathbf{312213}$$
$$...$$

The last point of any substance the authors have to make is about $r>1$. Let's play $(\mathbb{N}, \{0\}, |\cdot|, 3)$ -- i.e. the variation we analyzed with $r=3$. This means Nouns are repeated $3$ times each in description saying, for example, $1381$ has 
\begin{center}\enquote{two $1-1-1$'s, one $3-3-3$, and one $8-8-8$.}\end{center}
One inevitably thinks of speaking with a stutter. But we will call these OEIGs for Over-Emphasized Inventory Games (and spoofing the OEIS [10]). Thus here we say $f(1381)=211113331888$ and a round might go like:
$$1$$
$$1111$$
$$4111$$
$$3111\ 1444$$
$$4111\ 1333\ 3444$$
$$4111\ 4333\ 4444$$
$$3111\ 3333\ 6444$$
$$3111\ 5333\ 3444\ 1666$$
$$4111\ 5333\ 1555\ 3444\ 3666$$
$$4111\ 5333\ 4555\ 4444\ 3666$$
$$3111\ 4333\ 4555\ 6444\ 3666$$
$$3111\ 5333\ 3555\ 5444\ 4666$$
$$\mathbf{3111\ 5333\ 5555\ 4444\ 3666}$$
$$\mathbf{3111\ 5333\ 5555\ 4444\ 3666}$$
$$...$$

The gameplay should feel oddly similar to the $r=1$ variation we studied. Look -- the Loop's Adjectives are $33455$ (a bit similar to the fixed point of $T_5^*$: $11233$). It isn't coincidence. Alternatively, if one had started with $S_0=\{1,3,8,1\}$ (or again, equivalently with $\sigma_0:1\rightarrow 2,3\rightarrow 1,8\rightarrow 1,*\rightarrow 0)$ one would find a $2$-cycle:
$$ 3111\ 3222\ 7333\ 6444\ 3555\ 3666\ 3777\ 4888$$
$$ 3111\ 3222\ 8333\ 4444\ 3555\ 4666\ 4777\ 3888$$
with Adjectives again \textit{very} similar to $T_8^*$:
\begin{center}
    \begin{tabular}{c|c}
        Loop Adjectives & $2$-cycle \\
        from $r=3$ var. & from $T_8^*$ \\ \hline
        $33333467$ & $11111245$ \\
        $33334448$ & $11112226$
    \end{tabular}
\end{center}
To find out what's going on, we should generalize $T_n^*$. So let $T_{n,r}^*$ be the set of multisets
\begin{enumerate}
    \item containing $n$ elements,
    \item whose elements sum to $r+1$ times their order,
    \item and whose elements are $r$ or greater.
\end{enumerate}
Formally 
$$T_{n,r}^*=\{S\in\mathbb{N}_{\ge r}:\sum_{x\in S}x=(r+1)|S|=(r+1)n\}.$$
Our old definition then lines up as $T_n^*=T^*_{n,1}$. One could similarly generalize $g_n$ and $\mu_+$ and go on to show A) the Adjectives of a Loop in $(\mathbb{N}, \{0\}, |\cdot|, r)$ are members of $T_{n,r}^*$, B) the multisets of any $T_{n,r}^*$ and $T_{n,t}^*$ are in bijective correspondence (in particular if $t>r$ then any $S\in T_{n,t}^*$ is obtained adding $t-r$ to each element of some $R\in T_{n,r}^*$), and C) therefore every Loop of $(\mathbb{N}, \{0\}, |\cdot|, r)$ is mapped under a bijective correspondence to a cycle in $T_{n,0}^*$ under $\mu$. Note also $T_{n,0}^*$ is in bijective correspondence to partitions of $n$.

The authors wonder if in general the Loops of $(G, I, |\cdot|, r)$ can always be reduced by isomorphism to the Loops of $(G, I, |\cdot|, \hat{r})$ for some particular $\hat{r}\in G$ (in the former case, $\hat{r}=1$). They wonder also if and when such isomorphisms of Loops offer a translation of pre-period bounds across variations.

The speculations have been completed. The authors have navigated only one path and a couple landmarks of the Inventory forest (and perhaps not the most efficient path at that). They are not sure if these speculations are so simple (or so general) as to be useless. That is, they aren't sure if they have lead the reader to really new and nice scenery -- or only in a circle (or perhaps to a cliff). Thus the length of these speculations may only be evidence they don't have the nose for telling dead from fertile mathematical soil. But they believe it is always better to ask more questions than one answers and that to live in a universe with no open questions (or more likely, to live without interest for the open questions around oneself) is the closest experience to Hell we might have while alive. So, as one might imagine the totality of known mathematics as a house and the unknown as the wild outdoors, the authors have in this last section thrown open the doors and windows nearest them. They hoped to give the reader new exploration (or at least a fresh breeze). They ask the reader's forgiveness if -- in ignorance or haste -- they have only thrown open a closet, the oven, or the door to another room.

\end{document}